\documentclass[a4paper,12pt]{article}

\usepackage[utf8]{inputenc}
\usepackage[english]{babel}
\usepackage{hyperref}
\usepackage{amsmath,amsthm}
\usepackage{enumerate}
\usepackage{enumitem} 
\usepackage{amssymb}
\usepackage{graphicx, color}
\usepackage{epsfig}
\usepackage{tikz}
\usetikzlibrary{calc,decorations.pathmorphing,decorations.text,intersections}
\usepackage{subcaption}
\usepackage{refcount}
\usepackage[hmargin=2.5cm,vmargin=3cm]{geometry}
\usepackage{mathtools, braket}
\usepackage{authblk}
\usepackage{centernot}

\theoremstyle{plain}
\newtheorem{thm}{Thm}[section]

\newtheorem{theorem}[thm]{Theorem}
\newtheorem{lemma}[thm]{Lemma}
\newtheorem{corollary}[thm]{Corollary}
\newtheorem{proposition}[thm]{Proposition}
\newtheorem{conjecture}[thm]{Conjecture}

\newtheorem{observation}[thm]{Observation}
\newtheorem{definition}[thm]{Definition}

\newenvironment{proof*}{\noindent\emph{Proof of the claim.}}{\hfill$\Diamond$}

\providecommand{\customgenericname}{}
\newcommand{\newcustomtheorem}[2]{%
	\newenvironment{#1}[1]
	{%
		\renewcommand\customgenericname{#2}%
        \renewcommand\theinnercustomgeneric{##1}%
		\innercustomgeneric
	}
	{\endinnercustomgeneric}
}
\newcustomtheorem{customtheorem}{Theorem}
\newcustomtheorem{customlemma}{Lemma}

\renewcommand{\pod}[1]{\allowbreak\mathchoice
	{\if@display \mkern 0mu\else \mkern 0mu\fi (#1)}
	{\if@display \mkern 0mu\else \mkern 0mu\fi (#1)}
	{\mkern 1mu(\mathrm{mod}\mkern 4mu #1)}
	{\mkern 0mu(#1)}
}

\usepackage[color=green!30]{todonotes}

\usepackage{caption}
\usetikzlibrary{matrix}
\usetikzlibrary{decorations.markings}
\usetikzlibrary {arrows.meta} 
\tikzstyle{vertex}=[circle, draw, fill=black!50,
inner sep=0pt, minimum width=4pt]
\tikzset{->-/.style={decoration={
			markings,
			mark=at position .5 with {\arrow{>}}},postaction={decorate}}}

\tikzstyle{bigblue}=[color=blue, very thick, >=stealth]
\tikzstyle{lightblue}=[color=blue, thin, >=stealth]

\tikzstyle{bigred}=[color=red, very thick, >=stealth]
\tikzstyle{lightred}=[color=red, thin, >=stealth]

\tikzstyle{biggreen}=[color=black!30!green, very thick, >=stealth]
\tikzstyle{lightgreen}=[color=black!30!green,  thin, >=stealth]

\setlength\parindent{0pt}

\pgfdeclarelayer{bg}    
\pgfsetlayers{bg,main}  

\begin{document}

	\title{\bf {\Large Critically $3$-frustrated signed graphs}}
	\author[1]{Chiara Cappello}
	\author[2]{Reza Naserasr}
	\author[1]{Eckhard Steffen}
	\author[3]{Zhouningxin Wang}
	\affil[1]{\small Paderborn University, Department of Mathematics, Warburger Str. 100, 33098 Paderborn, Germany. }
	\affil[2]{\small Université Paris Cité, CNRS, IRIF, F-75013, Paris, France.} 
	\affil[3]{\small School of Mathematical Sciences and LPMC, Nankai University, Tianjin 300071, China.\linebreak Emails: 
    \{ccappello, es\}@mail.uni-paderborn.de, reza@irif.fr,   wangzhou@nankai.edu.cn} 
    \date{}
	\maketitle
	\renewcommand{\baselinestretch}{1.2}
	\linespread{1.2}
	\begin{abstract}
	Extending the notion of maxcut, the study of the frustration index of signed graphs is one of the basic questions in the theory of signed graphs. Recently two of the authors initiated the study of critically frustrated signed graphs. That is a signed graph whose frustration index decreases with the removal of any edge. The main focus of this study is on critical signed graphs which are not edge-disjoint unions of critically frustrated signed graphs (namely non-decomposable signed graphs) and which are not built from other critically frustrated signed graphs by subdivision. We conjecture that for any given $k$ there are only finitely many critically $k$-frustrated signed graphs of this kind. 
        	
    Providing support for this conjecture we show that there are only two of such critically $3$-frustrated signed graphs where there is no pair of edge-disjoint negative cycles. Similarly, we show that there are exactly ten critically $3$-frustrated signed planar graphs that are neither decomposable nor subdivisions of other critically frustrated signed graphs.
    We present a method for building  non-decomposable critically frustrated signed graphs based on two given such signed graphs. We also show that the condition of being non-decomposable is necessary for our conjecture.
	\end{abstract}

	\section{Introduction}
	In this paper, graphs are allowed to have multiedges and loops. For a graph $G$, let $E(G)$ and $V(G)$ denote the set of edges and the set of vertices of $G$, respectively. A \emph{signed graph} $(G, \sigma)$ is a graph $G$ together with an assignment $\sigma: E(G) \rightarrow \{+, -\}$ called \emph{signature}, where $\{+, -\}$ is viewed as a multiplicative group. An edge $e$ of $(G, \sigma)$ is called \emph{negative} if $\sigma(e)=-$ and \emph{positive} otherwise. The set of negative edges of $(G, \sigma)$ is denoted by $E^-_\sigma(G)$. We may simply write $E^-_\sigma$ if the underlying graph is clear from the context. Furthermore, if $E^-_\sigma(G) = \emptyset$, then such a signature $\sigma$ is called the \emph{all-positive} signature, and the corresponding signed graph is denoted by $(G, +)$. Similarly, if $E^-_\sigma(G) = E(G)$, then $\sigma$ is called the \emph{all-negative} signature, and the corresponding signed graph is denoted by $(G, -)$. For a signed graph $(G, \sigma)$ and a subgraph $H$ of $G$, with rather an abuse of notation, we write $(H, \sigma)$ to denote the signed graph $(H, \sigma |_{_{E(H)}})$.
	
	Let $G$ be a graph. For a vertex $v \in V(G)$, the \emph{degree} of $v$, denoted $d_{G}(v)$, or simply $d(v)$ when $G$ is clear from the context, is the number of edges incident with $v$, where loops are counted twice. For $X \subseteq V$, we denote by $\partial_{G}(X)$ an \emph{edge-cut} in $G$, which is a set of edges defined as follows: $\partial_{G}(X):= \{xy \in E(G) : x \in X, y \notin X\}$. The cardinality of an edge-cut $\partial_{G}(X)$ is denoted by $d_{G}(X)$, i.e., $d_{G}(X)=|\partial_{G}(X)|$. In particular, when we work with signed graphs without loops, if $X=\{v\}$, then in place of $d_G(\{v\})$ we write $d_G(v)$. Given a signature $\sigma$ on $G$, a refinement of the notation $d_{G}(X)$ for $(G, \sigma)$ is defined as follows: $$d^-_{(G, \sigma)}(X):= |\partial_{(G, \sigma)}(X) \cap E^-_\sigma|\text{~~and~~}d^+_{(G, \sigma)}(X):= | \partial_{(G, \sigma)}(X) \setminus E^-_\sigma|.$$ An edge-cut $\partial_{G}(X)$  is said to be \emph{equilibrated} under $\sigma$ if $d^+_{(G, \sigma)}(X)=d^-_{(G, \sigma)}(X)$.
	Whenever it is clear from the context, we may omit the index ``$(G, \sigma)$'' of the notations introduced above.

    A \emph{cycle} of $G$ is a connected $2$-regular subgraph. A cycle in $(G, \sigma)$ is said to be \emph{positive} (respectively, \emph{negative}) if it contains an even (respectively, odd) number of negative edges. A signed graph $(G, \sigma)$ is \emph{balanced} if it contains no negative cycle and \emph{unbalanced} otherwise.
	
	\smallskip
	For a signed graph $(G, \sigma)$, \emph{switching} at a vertex $v \in V(G)$ is to multiply the signs of all edges incident with $v$ by $-$. For an edge-cut $\partial (X)$, switching at $\partial (X)$ is to switch at all vertices in $X$. 
	Furthermore, two signed graphs $(G, \sigma)$ and $(G, \sigma ')$ are \emph{switching equivalent} if one can be obtained from the other by a series of switchings at some vertices. In such a case we may also say $\sigma$ is switching equivalent to $\sigma'$. It has been proved by Zaslavsky in \cite{Z82} that two signatures on the same graph are switching equivalent if and only if they induce the same set of negative cycles.
	
	Balanceness is not a common state for signed graphs, hence it is interesting to define parameters in order to compute how far a signed graph is from being balanced, see for example~\cite{AW19} and references therein. One of the basic parameters to measure this is the \emph{frustration index} of a signed graph $(G, \sigma)$, denoted by $\ell(G, \sigma)$, which is defined as follows: $$\ell(G, \sigma) = \min\limits_{\pi} \{|E^-_{\pi}|: (G, \pi) \text{~is switching equivalent to~} (G, \sigma)\}.$$ If $\ell(G, \sigma)=k$, then $(G, \sigma)$ is said to be \emph{$k$-frustrated}.
	For a signed graph $(G, \sigma)$, a signature $\pi$ is said to be a \emph{minimal equivalent signature} of $(G, \sigma)$ if $(G, \pi)$ is switching equivalent to $(G, \sigma)$ and there is no equivalent signature $\pi'$ such that  $E^-_{\pi'}\subset E^-_{\pi}$. In particular, a \emph{minimum equivalent signature}, or simply a \emph{minimum signature}, of $(G, \sigma)$ is a signature $\pi$ such that $(G, \pi)$ is switching equivalent to $(G, \sigma)$ and $|E^-_\pi|= \ell(G, \sigma)$. 

    \smallskip
	A closely related parameter to measure the balanceness of a signed graph is based on the notion of \emph{negative-cycle cover}: That is a set of edges that contains at least one edge of each negative cycle of $(G, \sigma)$. For a signed graph, that the order of a minimum negative-cycle cover and the frustration index are the same is a consequence of the following folklore lemma for which we provide a simple proof.
	
	\begin{lemma}\label{lem:NegativeCycleCover}
		There is a one-to-one correspondence between the set of minimal negative-cycle covers of a signed graph $(G, \sigma)$ and minimal equivalent signatures of $(G,\sigma)$.
	\end{lemma}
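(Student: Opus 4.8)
The plan is to make the correspondence explicit: send a minimal equivalent signature $\pi$ of $(G,\sigma)$ to its set of negative edges $E^-_\pi$, and recover a signature from a cover by deleting the cover and switching the (necessarily balanced) remainder to all-positive. Since a signature on $G$ is literally a function $E(G)\to\{+,-\}$, it is determined by $E^-_\pi$, so $\pi\mapsto E^-_\pi$ is automatically injective on any family of signatures; the content of the lemma is that it carries the set of minimal equivalent signatures \emph{onto} the set of minimal negative-cycle covers.

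First I would record the two elementary facts that do all the work. (i) If $\pi$ is switching equivalent to $\sigma$, then $(G,\pi)$ and $(G,\sigma)$ have exactly the same negative cycles (the quoted result of Zaslavsky), and each negative cycle has an odd, hence nonzero, number of $\pi$-negative edges; therefore $E^-_\pi$ meets every negative cycle of $(G,\sigma)$, i.e.\ it is a negative-cycle cover. (ii) Conversely, if $S\subseteq E(G)$ is any negative-cycle cover, then every cycle of $G-S$ is a cycle of $G$ avoiding $S$ and so is positive; hence $(G-S,\sigma)$ is balanced, and therefore (again by Zaslavsky's criterion, since $(G-S,\sigma)$ and the all-positive signature on $G-S$ share the same, empty, set of negative cycles) switching equivalent to an all-positive signed graph. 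Performing the corresponding switching at that set of vertices of the whole graph $G$ produces a signature $\pi$ switching equivalent to $\sigma$ with $E^-_\pi\subseteq S$.

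With (i) and (ii) the two inclusions are immediate. If $\pi$ is a minimal equivalent signature, then $E^-_\pi$ is a cover by (i); if it were not a minimal cover, some cover $S\subsetneq E^-_\pi$ would, by (ii), give an equivalent $\pi'$ with $E^-_{\pi'}\subseteq S\subsetneq E^-_\pi$, contradicting minimality of $\pi$. Conversely, if $S$ is a minimal negative-cycle cover, (ii) yields an equivalent $\pi$ with $E^-_\pi\subseteq S$; but $E^-_\pi$ is itself a cover by (i), so minimality of $S$ forces $E^-_\pi=S$, and the previous argument shows $\pi$ is a minimal equivalent signature, since a strictly smaller $E^-_{\pi'}$ would be a strictly smaller cover. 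Hence $\pi\mapsto E^-_\pi$ is a well-defined surjection from minimal equivalent signatures onto minimal negative-cycle covers, and being injective it is the desired bijection.

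The argument is essentially routine; the one point that needs care is step (ii): noticing that a negative-cycle cover is precisely what makes the complementary signed graph balanced, and that the switching witnessing balance of $G-S$ must be applied to all of $G$ (not merely to $G-S$) so that the new signature remains switching equivalent to $\sigma$ while confining its negative edges to $S$. Beyond keeping the two minimality transfers straight, I anticipate no real obstacle.
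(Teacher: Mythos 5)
Your proposal is correct and follows essentially the same route as the paper's proof: both rest on the observations that the negative edge set of any equivalent signature is a negative-cycle cover, and that deleting a cover leaves a balanced signed graph whose switching (applied to all of $G$) produces an equivalent signature with negative edges inside the cover, after which minimality transfers in both directions. Your write-up is merely more explicit about the bijection and the two minimality arguments than the paper's terse version.
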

	
	\begin{proof}
		First, note that any signature, in particular a minimal signature, is a negative-cycle cover.
		Let $E'$ be a minimal negative-cycle cover. We claim that $E'$ is the set of negative edges of a minimal equivalent signature of $(G, \sigma)$. To this end, observe that $(G-E', \sigma)$ is balanced and thus, it can be changed to $(G-E', +)$ by switching at a set $X$ of vertices.
		Then, after switching at $X$, the set of negative edges of the resulting signed graph has to be $E'$. Otherwise, the set of negative edges is a proper subset of $E'$ and also a  negative-cycle cover, contradicting the minimality of $E'$.
		This also implies that a negative-cycle cover provided by a minimal equivalent signature is minimal, as otherwise, an included minimal negative-cycle cover would be a smaller signature.
	\end{proof}

	\subsection[Critically frustrated signed graphs]{Critically $k$-frustrated signed graphs}

	As computing the frustration index of a signed graph $(G, -)$ is equivalent to computing the size of a maximum cut of $G$, the problem of computing $\ell(G, \sigma)$ for an input signed graph $(G,\sigma)$ is an NP-hard problem \cite{AM20}. This motivates the study of the structure of signed graphs with high frustration index. For example, a basic observation is that the existence of $k$ edge-disjoint negative cycle in $(G, \sigma)$ implies $\ell(G, \sigma)\geq k$. To better understand the structural properties of signed graphs with the frustration index being at least $k$, the notion of critically $k$-frustrated signed graphs is introduced in \cite{CS22}. This notion is formally defined as follows. 
	
	\begin{definition}
		A signed graph $(G, \sigma)$ is \emph{critically $k$-frustrated} if $\ell(G, \sigma) = k$ and for each edge $e \in E(G)$, we have $\ell(G-e, \sigma)= k-1$.
	\end{definition}
	
	We note that critically $k$-frustrated signed graphs can be characterized in the following way.
	
	\begin{theorem}{\rm \cite{CS22}}\label{thm:characterizations}
		Let $k$ be a positive integer and $(G,\sigma)$ be a $k$-frustrated signed graph. The following statements are equivalent. 
		\begin{enumerate}[label=(\arabic*)]
			\item $(G,\sigma)$ is critically $k$-frustrated.
			\item For each edge $e \in E(G)$, there exists a minimum signature $\sigma'$ of $(G, \sigma)$ such that $\sigma'(e)=-$.
			\item If $|E^-_{\sigma}|=\ell(G, \sigma)$, then every positive edge of $(G, \sigma)$ is contained in an equilibrated edge-cut under $\sigma$. 
		\end{enumerate}
	\end{theorem}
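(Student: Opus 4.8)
The plan is to establish the cyclic chain $(1)\Rightarrow(2)\Rightarrow(3)\Rightarrow(1)$, after first recording two elementary facts. \textbf{Fact A} is that deleting an edge changes the frustration index by at most one: $\ell(G,\sigma)-1\le \ell(G-e,\sigma)\le \ell(G,\sigma)$ for every $e\in E(G)$. The upper bound is immediate by restricting a minimum signature of $(G,\sigma)$ to $G-e$; for the lower bound, take a minimum signature $\pi_0$ of $(G-e,\sigma)$, realized from $\sigma|_{E(G-e)}$ by switching at some vertex set $X$, and switch $(G,\sigma)$ at the same $X$: the outcome is a signature equivalent to $\sigma$ whose negative edges are those of $\pi_0$ together with at most $e$, hence at most $\ell(G-e,\sigma)+1$ of them. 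In particular, $(G,\sigma)$ is critically $k$-frustrated if and only if $\ell(G-e,\sigma)<k$ for every $e$. \textbf{Fact B} is the standard description of minimum signatures via cuts: switching $(G,\sigma)$ at a vertex set $X$, equivalently at the edge-cut $\partial(X)$, replaces $E^-_\sigma$ by $E^-_\sigma\,\triangle\,\partial(X)$ and hence changes the number of negative edges by exactly $d^+_{(G,\sigma)}(X)-d^-_{(G,\sigma)}(X)$; since by Zaslavsky's theorem every equivalent signature arises this way, $\sigma$ is minimum if and only if $d^+_{(G,\sigma)}(X)\ge d^-_{(G,\sigma)}(X)$ for all $X\subseteq V(G)$.

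For $(1)\Rightarrow(2)$, fix $e$; criticality gives $\ell(G-e,\sigma)=k-1$, so extending a minimum signature of $(G-e,\sigma)$ across $e$ as in Fact A yields a signature $\pi$ equivalent to $\sigma$ with $|E^-_\pi|\le k$. As $\ell(G,\sigma)=k$ this forces $|E^-_\pi|=k$ and hence $\pi(e)=-$, so $\pi$ is a minimum signature of $(G,\sigma)$ making $e$ negative. For $(2)\Rightarrow(3)$, assume $|E^-_\sigma|=\ell(G,\sigma)=k$ and let $e$ be positive in $(G,\sigma)$; by (2) there is a minimum signature $\sigma'$ with $\sigma'(e)=-$, and (as noted in Fact B) $\sigma'$ is obtained from $\sigma$ by switching at some $\partial(X)$. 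Then $|E^-_{\sigma'}|=|E^-_\sigma|$ forces $d^+_{(G,\sigma)}(X)=d^-_{(G,\sigma)}(X)$, i.e.\ $\partial(X)$ is equilibrated under $\sigma$, and $e\in\partial(X)$ because $e$ changed sign.

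For $(3)\Rightarrow(1)$, observe that condition (3) has content only when $\sigma$ is itself a minimum signature (otherwise its hypothesis is empty), so we assume this; it is harmless because statement (1) depends only on the switching class. Fix $e\in E(G)$. If $\sigma(e)=-$, the restriction $\sigma|_{E(G-e)}$ has $k-1$ negative edges, so $\ell(G-e,\sigma)\le k-1$. If $\sigma(e)=+$, then (3) provides an equilibrated edge-cut $\partial(X)$ with $e\in\partial(X)$; switching at $\partial(X)$ preserves the number of negative edges by Fact B, hence gives another minimum signature, and under it $e$ is negative, so restricting to $G-e$ again yields $\ell(G-e,\sigma)\le k-1$. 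In all cases $\ell(G-e,\sigma)<k$, and by Fact A this means $(G,\sigma)$ is critically $k$-frustrated. The only delicate points are the lower bound of Fact A (extending a signature across the deleted edge while controlling the negative-edge count) and this last normalization: since (3) says nothing when $\sigma$ is not minimum, the equivalence should really be read for a minimum representative of the switching class, and the proof must make that reduction explicit. Everything else is routine bookkeeping with switchings and edge-cuts.
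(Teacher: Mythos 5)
Your proof is correct. Note that the paper itself does not prove this theorem --- it is quoted from \cite{CS22} --- so there is no in-paper argument to compare against; your chain $(1)\Rightarrow(2)\Rightarrow(3)\Rightarrow(1)$, resting on the two standard facts that edge deletion drops $\ell$ by at most one and that switching at $X$ changes the negative-edge count by exactly $d^+(X)-d^-(X)$, is the expected argument. You are also right to flag that statement (3) is vacuous unless $\sigma$ is a minimum signature, so the implication $(3)\Rightarrow(1)$ must be read for a minimum representative of the switching class; your explicit normalization handles this correctly.
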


	Note that, given a critically $k$-frustrated signed graph $(G, \sigma)$ with $\sigma$ being a minimum signature, for each edge-cut $\partial(X)$ it holds that $d^-(X) \leq d^+(X)$.

	\smallskip
	Given positive integers $k, k_1, \ldots, k_t$ such that $k=\sum^{t}_{i=1}k_i$, a critically $k$-frustrated signed graph $(G, \sigma)$ is said to be \emph{$(k_1, \dots,k_t)$-decomposable} if $E(G)$ can be partitioned into $t$ parts $E_1, E_2, \ldots, E_t$ such that for each $i$, $i\in \{1,2, \ldots, t\}$, the signed subgraph $(G[E_i], \sigma)$ is critically $k_i$-frustrated. If $(G, \sigma)$ is $(k_1, \dots,k_t)$-decomposable for some $t\geq 2$, then we simply say it is decomposable. A critically frustrated signed graph that is not decomposable is said to be \emph{non-decomposable}.

	\begin{observation}\label{obs:cycleunion}
		Let $(G, \sigma)$ be a critically $k$-frustrated signed graph. If for $k_1, \dots, k_t$ with $k=\sum^n_{i=1}k_i$ we find edge-disjoint signed subgraphs $(G_i, \sigma)$ of $(G, \sigma)$ such that $(G_i, \sigma)$ is $k_i$-frustrated, then $E(G)=\bigcup_{i\in [t]}E(G_i)$ and thus $(G, \sigma)$ is $(k_1, \dots,k_t)$-decomposable. 
  
        In particular, a critically $k$-frustrated signed graph containing $k$ edge-disjoint negative cycles is the union of all these negative cycles.
	\end{observation}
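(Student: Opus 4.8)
The plan is to run everything through the language of negative-cycle covers, translating between covers and signatures via Lemma~\ref{lem:NegativeCycleCover}. The two elementary facts I would isolate first are: (a) for a signed graph $(H,\sigma)$ and $e\in E(H)$, one has $\ell(H-e,\sigma)\ge \ell(H,\sigma)-1$, since adjoining $e$ to any negative-cycle cover of $H-e$ yields one of $H$; and (b) \emph{superadditivity}: if $H_1,\dots,H_s$ are pairwise edge-disjoint subgraphs of $H$, then $\ell(H,\sigma)\ge\sum_{j}\ell(H_j,\sigma)$, because the restriction to $E(H_j)$ of a minimum negative-cycle cover of $(H,\sigma)$ still meets every negative cycle of $H_j$ (such a cycle being a negative cycle of $H$ with all edges in $E(H_j)$), hence has at least $\ell(H_j,\sigma)$ edges, and these restrictions are pairwise disjoint.

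Applying (b) to $G_1,\dots,G_t$ gives $k=\ell(G,\sigma)\ge\sum_i\ell(G_i,\sigma)=\sum_i k_i=k$, so every inequality above is tight. In particular, for \emph{any} minimum negative-cycle cover $F$ of $(G,\sigma)$ we get $|F\cap E(G_i)|=k_i$ for each $i$, and since the $E(G_i)$ are pairwise disjoint with $\sum_i k_i=k=|F|$, also $F\subseteq\bigcup_i E(G_i)$; that is, no minimum negative-cycle cover uses an edge outside $\bigcup_i E(G_i)$. Now I would invoke criticality: for an arbitrary $e\in E(G)$, Theorem~\ref{thm:characterizations}(2) yields a minimum signature $\sigma'$ of $(G,\sigma)$ with $\sigma'(e)=-$; being of minimum size it is in particular a minimal equivalent signature, so by Lemma~\ref{lem:NegativeCycleCover} its negative edge set $E^-_{\sigma'}$ is a minimal negative-cycle cover, and having size $\ell(G,\sigma)$ it is a minimum one containing $e$. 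Hence $e\in\bigcup_i E(G_i)$, and as $e$ was arbitrary, $E(G)=\bigcup_{i\in[t]}E(G_i)$; the $E(G_i)$ being pairwise disjoint, they partition $E(G)$. (Since $G_i$ and $G[E(G_i)]$ differ only by isolated vertices, $(G[E(G_i)],\sigma)$ is $k_i$-frustrated.)

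It remains to upgrade ``$k_i$-frustrated'' to ``critically $k_i$-frustrated'' for each piece, which is what the definition of $(k_1,\dots,k_t)$-decomposable demands. Fix $i$ and $e\in E(G_i)$. By (a), $\ell(G_i-e,\sigma)\ge k_i-1$. Conversely, $G-e$ contains the pairwise edge-disjoint subgraphs $G_i-e$ and $G_j$ for $j\ne i$, so by (b) together with the criticality of $(G,\sigma)$ we get $k-1=\ell(G-e,\sigma)\ge\ell(G_i-e,\sigma)+\sum_{j\ne i}k_j$, whence $\ell(G_i-e,\sigma)\le k_i-1$. Thus $\ell(G_i-e,\sigma)=k_i-1$, so $(G[E(G_i)],\sigma)$ is critically $k_i$-frustrated and $(G,\sigma)$ is $(k_1,\dots,k_t)$-decomposable. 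The ``in particular'' statement is the special case $t=k$, $k_i=1$, with each $G_i$ a negative cycle (which is indeed $1$-frustrated). I do not expect a serious obstacle here; the one point deserving care — and it is minor — is that Lemma~\ref{lem:NegativeCycleCover} is phrased for \emph{minimal} covers while the argument needs a \emph{minimum} cover through $e$, which is fine because every negative-cycle cover has at least $\ell(G,\sigma)$ edges.
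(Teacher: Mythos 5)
Your proof is correct; the paper states this as an observation without proof, and your argument via negative-cycle covers (superadditivity of $\ell$ over edge-disjoint subgraphs, tightness forcing every minimum cover into $\bigcup_i E(G_i)$, and criticality via Theorem~\ref{thm:characterizations}(2) placing every edge in some minimum cover) is exactly the intended reasoning, including the extra step upgrading each piece to \emph{critically} $k_i$-frustrated, which the definition of decomposability indeed requires. No gaps.
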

 
	Note also that, if a critically $k$-frustrated signed graph $(G, \sigma)$ contains two parallel edges $e_1$ and $e_2$ having different signs, then each equilibrated cut of $(G, \sigma)$ is also an equilibrated cut of $(G-\{e_1,e_2\}, \sigma)$.
	Hence, by Theorem~\ref{thm:characterizations}, the following observation holds.
	\begin{observation}\label{obs:loops}
		Let $(G, \sigma)$ be a critically $k$-frustrated signed graph. If $(G, \sigma)$ contains a loop, then the loop is negative and $(G, \sigma)$ is decomposable. 
		If $(G, \sigma)$ contains two parallel edges of different signs, then $(G, \sigma)$ is decomposable.
	\end{observation}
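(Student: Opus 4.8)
\emph{Overview of the plan.} I would prove both assertions by the same device: locate inside $(G,\sigma)$ a $1$-frustrated signed subgraph (a negative loop in the first case, a ``mismatched'' digon in the second), show that deleting its edge set leaves a $(k-1)$-frustrated signed graph, and then quote Observation~\ref{obs:cycleunion} with $t=2$, $k_1=1$, $k_2=k-1$ to conclude that $(G,\sigma)$ is $(1,k-1)$-decomposable, hence decomposable.

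\emph{The loop case.} First I would show that a loop $e$ of $(G,\sigma)$ must be negative. By Zaslavsky's theorem (quoted in the introduction) switching preserves the set of negative cycles, and a loop is a cycle that is negative or positive exactly according to its sign; hence the sign of a loop is constant on the switching class of $(G,\sigma)$. Criticality and Theorem~\ref{thm:characterizations}(2) then produce a switching-equivalent signature under which $e$ is negative, so $\sigma(e)=-$ already. (Alternatively, with $\sigma$ taken minimum one can invoke Theorem~\ref{thm:characterizations}(3): a positive loop would have to lie in an equilibrated edge-cut, but a loop lies in no edge-cut at all.) Once $e$ is a negative loop it is itself a $1$-frustrated signed subgraph, while $(G-e,\sigma)$ is $(k-1)$-frustrated straight from the definition of criticality; these two signed subgraphs are edge-disjoint, so Observation~\ref{obs:cycleunion} finishes. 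The only degeneracy is $k=1$, where criticality forces $(G,\sigma)$ to consist of the single negative loop.

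\emph{The parallel-edge case.} Since decomposability of a critically frustrated signed graph depends only on its switching class, I would first pass to a minimum signature, so $|E^-_\sigma|=\ell(G,\sigma)=k$. For parallel edges $e_1,e_2$ with ends $u,v$ and $\sigma(e_1)\neq\sigma(e_2)$, note that switching at $u$ or at $v$ flips both of them while switching elsewhere flips neither, so $e_1,e_2$ keep opposite signs under every signature switching equivalent to $\sigma$; say $\sigma(e_1)=+$, $\sigma(e_2)=-$. The digon on $\{e_1,e_2\}$ is a negative $2$-cycle with exactly one negative edge, hence $1$-frustrated. The step I expect to be the main obstacle --- modest though it is --- is verifying that $(G-\{e_1,e_2\},\sigma)$ is $(k-1)$-frustrated, not $(k-2)$-frustrated. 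The bound $\ell(G-\{e_1,e_2\},\sigma)\le k-1$ is clear since only $k-1$ negative edges remain. For the reverse bound I would argue by contradiction: a vertex set $X$ witnessing $\ell(G-\{e_1,e_2\},\sigma)\le k-2$ would, applied as a switching to $(G,\sigma)$, yield a switching-equivalent signature with at most $(k-2)+1<k$ negative edges (the ``$+1$'' being whichever of $e_1,e_2$ stays negative), contradicting $\ell(G,\sigma)=k$. Then $(G-\{e_1,e_2\},\sigma)$ is $(k-1)$-frustrated and the digon is $1$-frustrated, so Observation~\ref{obs:cycleunion} again gives $(1,k-1)$-decomposability.

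An alternative to that last step, closer to the remark recorded just before the statement, is to observe that every equilibrated edge-cut of $(G,\sigma)$ restricts to an equilibrated edge-cut of $(G-\{e_1,e_2\},\sigma)$, and then apply Theorem~\ref{thm:characterizations}(3) to transfer criticality; but this route still needs the equality $\ell(G-\{e_1,e_2\},\sigma)=k-1$ as input. Everything else in both cases --- loops lying in no edge-cut, digons being negative $2$-cycles, and the bookkeeping inside Observation~\ref{obs:cycleunion} --- is immediate, so the write-up should be short.
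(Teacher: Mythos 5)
Your proof is correct and takes essentially the same approach as the paper: the paper likewise isolates the negative loop, respectively the mixed digon, as a critically $1$-frustrated part and justifies criticality of the remainder via the one-line remark preceding the statement (every equilibrated cut of $(G,\sigma)$ restricts to one of $(G-\{e_1,e_2\},\sigma)$, then apply Theorem~\ref{thm:characterizations}) --- precisely the alternative route you sketch at the end. Your main route, the switching computation showing $\ell(G-\{e_1,e_2\},\sigma)=k-1$ followed by Observation~\ref{obs:cycleunion}, supplies exactly the input that remark leaves implicit, and your note on the $k=1$ loop degeneracy is a detail the paper glosses over.
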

	
    Since a decomposable signed graph relies on the structures of its critical subgraphs, in the following, we choose to focus on critically frustrated signed graphs which are not decomposable. In particular, from here on, the signed graphs that we consider have no loop and no parallel edges of different signs.

    Another graph operation to preserve the property of being critically frustrated is the following (modified) notion of \emph{subdivision} as introduced in~\cite{CS22}.
	For a signed graph $(G, \sigma)$ and a positive integer $t$, a \emph{$t$-multiedge} between two vertices $x, y$ of $G$ is a set of $t$ edges connecting $x$ and $y$, denoted by $E_{xy}$.  
	As we have assumed (from here on) that there are no parallel edges of different signs, all the edges of a $t$-multiedge $E_{xy}$ are of the same sign and, depending on this sign, it will be referred to as \emph{all-positive} or \emph{all-negative} $t$-multiedge.
	
	Given a signed graph $(G, \sigma)$ and an all-positive (resp. all-negative) $t$-multiedge $E_{xy}$ of $(G, \sigma)$, let $(G', \sigma')$ denote the signed graph obtained from $(G-E_{xy}, \sigma)$ by adding a new vertex $v$, and adding two $t$-multiedges $E_{xv}$ and $E_{vy}$, so that $E_{xv}$ is all-positive (resp. all-negative) and $E_{vy}$ is all-positive (in both cases). We say that $(G', \sigma ')$ is obtained from $(G, \sigma)$ by \emph{subdividing} at a multiedge $E_{xy}$. If a signed graph $(H, \pi)$ is obtained by subdividing at a series of multiedges of $(G, \sigma)$, then we say that $(H, \pi)$ is a \emph{subdivision} of $(G, \sigma)$. 
	We note that the equivalence class of $(H, \pi)$ is uniquely determined by $(G, \sigma)$. If a signed graph $(G, \sigma)$ is not a proper subdivision of any signed graph, then we say that $(G, \sigma)$ is \emph{irreducible}. 
	Note that, if $(G, \sigma)$ is decomposable, then all its subdivisions are also decomposable. See Figure \ref{fig:subd} for examples.

	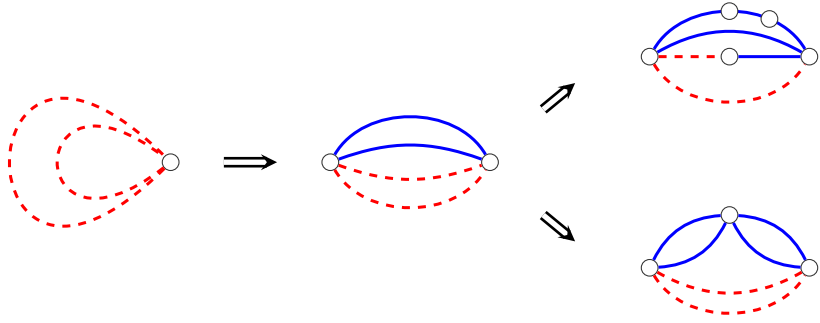
\begin{figure}[htbp]
		\centering
		\begin{tikzpicture}[scale=.7]
        \centering
	   \draw[rotate=0] (0,0)  node[circle,draw=black!80, inner sep=0mm, minimum size=2.2mm] (x){};
			\draw[rotate=0] [line width=0.4mm, dashed, red] (x) .. controls (-2.8,-2.2) and (-2.8,2.2)  .. (x);	
			\draw[rotate=0] [line width=0.4mm, dashed, red] (x) .. controls (-4,-4) and (-4,4)  .. (x);
			
			\draw[>=stealth, thick, line width=1pt, double distance=2pt, ->] (1,0) -- (2,0);
			
			\draw[rotate=0] (3,0)  node[circle,draw=black!80, inner sep=0mm, minimum size=2.2mm] (x){};
			\draw[rotate=0] (6,0)  node[circle,draw=black!80, inner sep=0mm, minimum size=2.2mm] (y){};
			\draw [bend left=20,  line width=0.4mm, blue] (x) to (y);
			\draw [bend right=20, dashed, line width=0.4mm, red] (x) to (y);
			\draw [bend left=60,  line width=0.4mm, blue] (x) to (y);
			\draw [bend right=60, dashed, line width=0.4mm, red] (x) to (y);
			
			\draw[>=stealth, thick, line width=1pt, double distance=2pt, ->] (7,1) -- (7.6,1.5);
			\draw[>=stealth, thick, line width=1pt, double distance=2pt, ->] (7,-1) -- (7.6,-1.5);
			
			\draw[rotate=0] (9,2)  node[circle,draw=black!80, inner sep=0mm, minimum size=2.2mm] (x1){};
			\draw[rotate=0] (10.5,2)  node[circle,draw=black!80, inner sep=0mm, minimum size=2.2mm] (x2){};
			\draw[rotate=0] (12,2)  node[circle,draw=black!80, inner sep=0mm, minimum size=2.2mm] (x3){};
			
			\draw [bend right=60, dashed, line width=0.4mm, red] (x1) to (x3);
			\draw [dashed, line width=0.4mm, red] (x1) to (x2);	
			\draw [line width=0.4mm, blue] (x2) to (x3);
			\draw [bend left=30, line width=0.4mm, blue] (x1) to (x3);
			\draw [bend left=60, line width=0.4mm, blue, name path=upper_positive] (x1) to (x3);
			
			\path[name path=h1] (10.50,2)-- (10.499,4);
			\path[name path=h2] (11.25,2)-- (11.25,4);
			\path[name intersections={of=h1 and upper_positive,by={i1}}];
			\path[name intersections={of=h2 and upper_positive,by={i2}}];
			
			\draw[rotate=0] (i1)  node[circle,draw=black!80, inner sep=0mm, minimum size=2.2mm, fill=white] (x4){};
			\draw[rotate=0] (i2)  node[circle,draw=black!80, inner sep=0mm, minimum size=2.2mm, fill=white] (x5){};
			
			\draw[rotate=0] (9,-2)  node[circle,draw=black!80, inner sep=0mm, minimum size=2.2mm] (x1){};
			\draw[rotate=0] (12,-2)  node[circle,draw=black!80, inner sep=0mm, minimum size=2.2mm] (x3){};
			\draw[rotate=0] (10.5,-1)  node[circle,draw=black!80, inner sep=0mm, minimum size=2.2mm] (x2){};
			\draw [bend right=60, dashed, line width=0.4mm, red] (x1) to (x3);
			\draw [bend right=30, dashed, line width=0.4mm, red] (x1) to (x3);
			\draw [bend left=30, line width=0.4mm, blue] (x1) to (x2);
			\draw [bend right=30, line width=0.4mm, blue] (x1) to (x2);
			\draw [bend left=30, line width=0.4mm, blue] (x3) to (x2);
			\draw [bend right=30, line width=0.4mm, blue] (x3) to (x2);
			
		\end{tikzpicture}
		\caption{An example of possible subdivisions.}
		\label{fig:subd}
	\end{figure}
	
	The importance of this generalized notion of the subdivision in the study of critically frustrated signed graphs is highlighted in the following proposition.
	
	\begin{proposition}{\rm \cite{CS22}}
		For a signed graph $(G, \sigma)$ and a subdivision $(G', \sigma')$ of it, we have: (i). $\ell(G,\sigma)=\ell(G',\sigma')$; (ii). $(G,\sigma)$ is critically frustrated if and only if $(G', \sigma')$ is critically frustrated.
	\end{proposition}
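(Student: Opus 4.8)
The plan is to establish (i) by explicitly transporting minimum signatures between $(G,\sigma)$ and $(G',\sigma')$, using only Zaslavsky's criterion (two signatures on a graph are switching equivalent iff they induce the same set of negative cycles) and the definition of $\ell$; statement (ii) will then follow from (i) together with characterization~(2) of Theorem~\ref{thm:characterizations} (criticality $\Leftrightarrow$ every edge is negative in some minimum signature). It suffices to handle a single subdivision step --- replacing a monochromatic $t$-multiedge $E_{xy}$ by a new vertex $v$ and multiedges $E_{xv}$ (same sign as $E_{xy}$) and $E_{vy}$ (all positive) --- and then induct on the number of steps, since a subdivision of a subdivision is again a subdivision.

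For (i) I would first record two structural facts. First, every cycle of $G'$ is of exactly one of three types: it avoids $v$, and is then literally a cycle of $G-E_{xy}$; it passes through $v$ using one edge of $E_{xv}$ and one of $E_{vy}$, and then contracting the path $xvy$ yields a cycle of $G$ through exactly one edge of $E_{xy}$, the two cycles having matching signs under the respective signatures; or it is a digon inside $E_{xv}$ or inside $E_{vy}$, which is automatically positive (multiedges being monochromatic) and hence negligible. Second, every signature equivalent to $\sigma$ keeps $E_{xy}$ monochromatic, since switching at $x$ or at $y$ flips all of $E_{xy}$ simultaneously, and likewise for $E_{xv},E_{vy}$ relative to $\sigma'$. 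Now, from a minimum signature $\mu$ of $(G,\sigma)$ with $E_{xy}$ of some sign $c$, define $\Phi(\mu)$ on $G'$ by keeping $\mu$ on $G-E_{xy}$, setting $E_{xv}$ to sign $c$, and $E_{vy}$ to sign $+$; the sign bookkeeping above, combined with $\mu$ and $\sigma$ having the same negative cycles, shows that $\Phi(\mu)$ and $\sigma'$ have the same negative cycles, hence $\Phi(\mu)\sim\sigma'$, while $|E^-_{\Phi(\mu)}|=|E^-_{\mu}|$; so $\ell(G',\sigma')\le\ell(G,\sigma)$. Symmetrically, from a minimum signature $\mu'$ of $(G',\sigma')$ with $E_{xv},E_{vy}$ of signs $a,b$, define $\Psi(\mu')$ on $G$ by keeping $\mu'$ on $G-E_{xy}$ and setting $E_{xy}$ to sign $ab$; then $\Psi(\mu')\sim\sigma$ and $|E^-_{\Psi(\mu')}|\le|E^-_{\mu'}|$, giving $\ell(G,\sigma)\le\ell(G',\sigma')$. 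This proves (i).

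The subtle point --- and the step I expect to be the main obstacle --- is the case $a=b=-$: there $\Psi(\mu')$ has $2t$ fewer negative edges than $\mu'$, which is harmless for the inequality in (i) but would break the minimum-to-minimum correspondence needed for (ii). The resolution is the observation that a \emph{minimum} signature of $(G',\sigma')$ can never have both $E_{xv}$ and $E_{vy}$ all negative: switching at $v$ flips exactly the $2t$ edges incident to $v$ and would then strictly decrease the number of negative edges. With this in hand, both $\Phi$ and $\Psi$ send minimum signatures to minimum signatures.

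For (ii) I would argue through Theorem~\ref{thm:characterizations}(2). An edge in $E(G-E_{xy})=E(G'-v)$ is handled by transporting a witnessing minimum signature directly through $\Phi$ or $\Psi$. For an edge of $E_{xv}$: start from a minimum signature of $(G,\sigma)$ in which some --- hence every --- edge of $E_{xy}$ is negative (available when $(G,\sigma)$ is critical); then $\Phi$ produces a minimum signature of $(G',\sigma')$ with all of $E_{xv}$ negative. For an edge of $E_{vy}$: take that signature and switch at $v$. Conversely, for an edge of $E_{xy}$: take a minimum signature of $(G',\sigma')$ with all of $E_{xv}$ negative (available when $(G',\sigma')$ is critical); by the observation above its $E_{vy}$ is all positive, so $\Psi$ delivers a minimum signature of $(G,\sigma)$ with all of $E_{xy}$ negative. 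Running both directions over all edges yields the equivalence in (ii).
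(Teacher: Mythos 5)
The paper does not prove this proposition; it is imported from~\cite{CS22} as a black box, so there is no in-paper argument to compare yours against. On its own terms your proof is correct and complete. The three-way classification of cycles of $G'$ is right (digons inside a monochromatic multiedge are positive and hence irrelevant to Zaslavsky's criterion), the observation that switching preserves monochromaticity of a multiedge is exactly what makes $\Phi$ and $\Psi$ well defined on equivalent signatures, and you correctly isolate the one genuine subtlety: a minimum signature of $(G',\sigma')$ cannot make both $E_{xv}$ and $E_{vy}$ all-negative, since switching at $v$ would flip precisely those $2t$ edges and strictly decrease the count. That single observation is what upgrades the inequality $|E^-_{\Psi(\mu')}|\le|E^-_{\mu'}|$ to the minimum-to-minimum correspondence needed for part (ii) via Theorem~\ref{thm:characterizations}(2), and your case analysis over the three edge classes of $G'$ (respectively the two of $G$) covers everything. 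Two cosmetic remarks only: the induction on the number of subdivision steps deserves the one-line justification that each intermediate signed graph is again a signed graph of the required form (multiedges stay monochromatic, so the single-step lemma reapplies); and the degenerate case where $E_{xy}$ is a multiloop ($x=y$, as in Figure~\ref{fig:subd}) needs a word, since there the "cycle through $v$" of type (b) is itself a digon with one edge in each of $E_{xv}$ and $E_{vx}$ --- but the sign bookkeeping and the switching-at-$v$ argument go through unchanged.
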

	
	Hence, without loss of generality, we can always limit our study to the class of irreducible signed graphs.
	It follows that every irreducible critically $k$-frustrated signed graph $(G, \sigma)$ satisfies that $d(v) \geq 3$ for each $v \in V(G)$.
	
	\medskip
	Let $\mathcal{L}(k)$ be the family of irreducible critically $k$-frustrated signed graphs and let $\mathcal{L}^*(k)$ be the family of irreducible non-decomposable critically $k$-frustrated signed graphs. 
	
	\begin{theorem}{\rm \cite{CS22}} \label{thm:2family}
		We have $\mathcal{L}(1)=\mathcal{L}^*(1)=\{C_{-1}\}$, $\mathcal{L}(2) = \{ C_{-1} \cup C_{-1} , 2C_{-1}, (K_4, -)\}$ and $\mathcal{L}^*(2)=\{(K_4, -)\}$.
	\end{theorem}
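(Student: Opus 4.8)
The proof naturally splits into $k=1$ and $k=2$, and in both cases turns on whether the signed graph contains $k$ pairwise edge-disjoint negative cycles.

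For $k=1$, I would first argue that every critically $1$-frustrated signed graph is a single negative cycle: since $\ell(G,\sigma)=1>0$ there is a negative cycle $C$, and if some edge $e$ lay outside $C$ then $C$ would remain negative in $(G-e,\sigma)$, giving $\ell(G-e,\sigma)\ge 1$ and contradicting criticality; hence $E(C)=E(G)$. A single negative cycle of length $\ge 2$ is a proper subdivision of a shorter one (subdivide the unique positive single edge of a reduced representative, all the way down to the negative loop $C_{-1}$), so $C_{-1}$ is the only irreducible representative. Since $C_{-1}$ has one edge it is vacuously non-decomposable, giving $\mathcal{L}(1)=\mathcal{L}^*(1)=\{C_{-1}\}$.

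For $k=2$, let $(G,\sigma)\in\mathcal{L}(2)$ and suppose first that $(G,\sigma)$ contains two edge-disjoint negative cycles $C_1,C_2$. By Observation~\ref{obs:cycleunion} we get $G=C_1\cup C_2$, so $(G,\sigma)$ is $(1,1)$-decomposable; conversely a $(1,1)$-decomposition is precisely a partition of $E(G)$ into two negative cycles (using the $k=1$ classification), so this case coincides exactly with ``$(G,\sigma)$ decomposable''. I would then classify the irreducible unions $C_1\cup C_2$: if $C_1,C_2$ are vertex-disjoint, all vertices have degree $2$ and each $C_i$ reduces to a negative loop, yielding $C_{-1}\cup C_{-1}$; if they share at least one vertex $w$, all other vertices have degree $2$ and each $C_i$ reduces to a negative loop at $w$, yielding $2C_{-1}$ — and when $C_1,C_2$ share more than one vertex the remaining degree-$4$ vertices are themselves removed by further (loop-)subdivisions (intermediate multigraphs such as a pair of parallel positive and a pair of parallel negative edges are subdivisions of $2C_{-1}$), so again only $2C_{-1}$ is irreducible. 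This accounts for the two decomposable members of $\mathcal{L}(2)$.

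It remains to treat the non-decomposable case: $(G,\sigma)\in\mathcal{L}(2)$ with no two edge-disjoint negative cycles. Taking $\sigma$ a minimum signature we have $E^-_\sigma=\{e_1,e_2\}$, and Observation~\ref{obs:loops} lets us assume there are no loops and no parallel edges of different signs; then a cycle is negative iff it contains exactly one of $e_1,e_2$. Since $\ell=2$, neither $e_1$ nor $e_2$ alone covers all negative cycles, so there are negative cycles $C_1\ni e_1$ (avoiding $e_2$) and $C_2\ni e_2$ (avoiding $e_1$), which by hypothesis must share a positive edge. The core of the argument is then a structural analysis combining item~(3) of Theorem~\ref{thm:characterizations} (every positive edge lies in an equilibrated cut $\partial(X)$, which here forces $d^-(X)=d^+(X)\in\{1,2\}$), the general inequality $d^-(Y)\le d^+(Y)$ for every $Y$, and the minimum-degree bound $d(v)\ge 3$ coming from irreducibility; from these one would pin down that $G$ has exactly four vertices and six edges, that $e_1,e_2$ form a perfect matching, and that all other edges are positive, i.e.\ $(G,\sigma)=(K_4,-)$ up to switching. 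A direct check confirms $(K_4,-)$ is irreducible, critically $2$-frustrated (its negative cycles are the four triangles, pairwise edge-intersecting and minimally covered by a perfect matching), and non-decomposable (the six edges of $K_4$ cannot be partitioned into two cycles), so $\mathcal{L}^*(2)=\{(K_4,-)\}$ and $\mathcal{L}(2)=\{C_{-1}\cup C_{-1},\,2C_{-1},\,(K_4,-)\}$. I expect this last step — forcing the ``no two edge-disjoint negative cycles'' critical graph to be $K_4$, in particular ruling out configurations on five or more vertices and genuine multigraph configurations — to be the main obstacle.
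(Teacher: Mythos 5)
First, note that the paper does not prove Theorem~\ref{thm:2family}; it is quoted from~\cite{CS22}, so there is no in-paper proof to compare against. Your $k=1$ argument is fine, and your overall strategy (split on whether two edge-disjoint negative cycles exist) is reasonable, but both branches of your $k=2$ case have genuine gaps. The more serious one is the non-decomposable branch: you reduce everything to the claim that an irreducible, critically $2$-frustrated signed graph with no two edge-disjoint negative cycles must be $(K_4,-)$, and then only gesture at ``a structural analysis'' that ``would pin down'' four vertices and six edges. That pinning-down is exactly the content of the theorem and is never carried out. The paper hands you the right tool: by Theorem~\ref{thm:Geelen-Guenin-Schrijver1}, a $2$-frustrated signed graph with no two edge-disjoint negative cycles contains a $(K_4,-)$-subdivision $(H,\sigma)$ with $\ell(H,\sigma)=2$; criticality then forces $E(G)=E(H)$ (any edge outside $H$ could be deleted without dropping the frustration index below $2$), and irreducibility forces $(G,\sigma)=(K_4,-)$. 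With that, the hard step becomes two lines.

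The decomposable branch also has a gap: you classify ``the irreducible unions $C_1\cup C_2$'' using only irreducibility, and your parenthetical about further loop-subdivisions does not cover interleaved configurations. Concretely, take $V=\{u,v,w,x\}$ with $C_1=u\,v\,w\,x\,u$ and $C_2=u\,w\,v\,x\,u$ (so the pairs $vw$ and $xu$ carry $2$-multiedges and the other four pairs single edges), with $uv$ and $uw$ negative. This is an edge-disjoint union of two negative cycles with $\ell=2$, and it is irreducible: every vertex is incident with one $2$-multiedge and two single edges to distinct neighbours, so no vertex can be suppressed. Yet it is neither $C_{-1}\cup C_{-1}$ nor $2C_{-1}$. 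It is excluded from $\mathcal{L}(2)$ only because it is \emph{not critical}: deleting the $C_1$-copy of the edge $vw$ leaves the two edge-disjoint negative triangles $u\,v\,x\,u$ and $u\,w\,x\,u$ (using the two different copies of the $xu$-edge), so the frustration index does not drop. Hence in this branch you must invoke criticality to rule out interleaving of the two cycles on two or more shared branch vertices, and your write-up never does so. Once interleaving is excluded, the reduction to $C_{-1}\cup C_{-1}$ (vertex-disjoint case) and $2C_{-1}$ (shared-vertex case, via the four-parallel-edge intermediate of Figure~\ref{fig:subd}) goes through as you describe.
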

	
	Here $C_{-1}$ is the signed graph on one vertex with a negative loop, $C_{-1} \cup C_{-1}$ is two disjoint copies of it, and $2C_{-1}$ is the signed graph on one vertex with two negative loops on it.
	
	We define a \emph{$(K_4, -)$-subdivision} to be a signed subdivision of $K_4$ with a signature such that each cycle corresponding to a triangle of $K_4$ is negative. 
	Note that if $(H, \pi)$ is a $(K_4, -)$-subdivision, then $\ell(H, \pi)=2$. The following is one of the first structural results on $k$-frustrated signed graphs.
	
	\begin{theorem} \label{thm:Geelen-Guenin-Schrijver1}{\rm \cite{GG01}}
		If a $k$-frustrated signed graph contains no $(K_4, -)$-subdivision, then it contains $k$ edge-disjoint negative cycles.
	\end{theorem}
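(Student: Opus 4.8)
The theorem is the non-trivial direction of a min--max identity: write $\nu(G,\sigma)$ for the maximum number of pairwise edge-disjoint negative cycles of $(G,\sigma)$; by Lemma~\ref{lem:NegativeCycleCover}, $\ell(G,\sigma)$ is the minimum size of a negative-cycle cover, so $\nu(G,\sigma)\le\ell(G,\sigma)$ is immediate, and the claim is that equality holds whenever there is no $(K_4,-)$-subdivision. I would argue by contradiction and take a counterexample $(G,\sigma)$ minimizing $|V(G)|+|E(G)|$; thus $(G,\sigma)$ is $k$-frustrated with $k\ge 2$, contains no $(K_4,-)$-subdivision, yet has no $k$ pairwise edge-disjoint negative cycles. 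Since having no $(K_4,-)$-subdivision passes to subgraphs and survives un-subdividing, a first round of reductions is routine. If $G$ has more than one block, $\ell$ and $\nu$ are both additive over blocks, so some block is a strictly smaller counterexample; hence $G$ is $2$-connected. If $\ell(G-e,\sigma)=k$ for some edge $e$, then $(G-e,\sigma)$ is a smaller counterexample; hence $\ell(G-e,\sigma)=k-1$ for every edge $e$, i.e.\ $(G,\sigma)$ is \emph{critically $k$-frustrated}. If $(G,\sigma)$ were $(k_1,\dots,k_t)$-decomposable, each critical piece --- being $k_i$-frustrated and free of $(K_4,-)$-subdivisions, hence not a counterexample by minimality --- would carry $k_i$ edge-disjoint negative cycles, and their union would give $k$ such cycles in $G$; hence $(G,\sigma)$ is non-decomposable, so by Observation~\ref{obs:loops} it has no loop and no two parallel edges of opposite sign. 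Lastly, if $(G,\sigma)$ were a proper subdivision of some $(G_0,\sigma_0)$, then $(G_0,\sigma_0)$ would be a smaller counterexample (its $k$ edge-disjoint negative cycles lift to $(G,\sigma)$); hence $(G,\sigma)$ is irreducible and $d(v)\ge 3$ for all $v$. Now fix a minimum negative-cycle cover $F$ with $|F|=k$ and a maximum packing $\mathcal{P}$ of $\nu(G,\sigma)\le k-1$ pairwise edge-disjoint negative cycles.

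The obstacle to a quick induction is that one cannot merely delete the edges of a single negative cycle $C$ and recurse on $(G-E(C),\sigma)$: removing a negative cycle's edges can drop the frustration index by more than one --- in $(K_4,-)$, deleting a triangle leaves a star, so $\ell$ collapses from $2$ to $0$ --- and excluding $(K_4,-)$-subdivisions is precisely what must rule this out. The plan for the core is an augmentation argument in the spirit of matching theory: one traces alternating walks that switch between edges used by the cycles of $\mathcal{P}$ and edges used by none of them, bookkeeping signs so that the negative cycles being assembled are really negative, and keeping the cover $F$ in view so the walk meets it rarely. Either such a walk closes up into a collection of negative cycles that can be rearranged into $\nu(G,\sigma)+1$ edge-disjoint negative cycles, contradicting the maximality of $\mathcal{P}$; or the walk gets blocked in a configuration that isolates four vertices together with six internally vertex-disjoint paths whose lengths realize the parities of the four triangles of $K_4$ --- i.e.\ a $(K_4,-)$-subdivision --- contradicting the hypothesis. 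To make this run one must first remove small edge-cuts: a $2$-edge-cut whose two edges carry the same sign is contracted, otherwise one reroutes across it; $3$-edge-cuts are crossed by rerouting a cycle of $\mathcal{P}$ or else recognized as the middle of a subdivided triangle; and one continues until reaching an internally $4$-edge-connected kernel in which the alternating structure pushes through. This combination of connectivity reductions and blocked-walk analysis --- essentially the argument of Geelen and Guenin \cite{GG01} --- is the technical heart and the step I expect to be the main obstacle.

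Two remarks on the shape of the argument. First, one may instead subdivide every positive edge of $(G,\sigma)$ once: in the resulting ordinary graph, the cycles that are odd in the usual length sense are exactly those corresponding to negative cycles of $(G,\sigma)$, and minimum covers and maximum packings transfer faithfully, so the statement becomes the edge-disjoint odd-cycle min--max for graphs with no ``odd $K_4$''. This is a convenient repackaging, but it does not bypass the obstacle, since the odd-cycle version is proved by the same augmentation. Second, once $k$ edge-disjoint negative cycles are produced, Observation~\ref{obs:cycleunion} yields more: in the critical case $(G,\sigma)$ is exactly the edge-disjoint union of those $k$ cycles, hence Eulerian --- consistent with the fact that the obstruction $(K_4,-)$ is $3$-regular and thus can never arise as such a union, a useful check on why the hypothesis is needed.
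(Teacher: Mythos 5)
This statement is quoted in the paper as a known theorem of Geelen and Guenin \cite{GG01}; the paper gives no proof of it, so there is nothing internal to compare your argument against. Your preliminary reductions are sound and correctly executed: $\nu\le\ell$ via Lemma~\ref{lem:NegativeCycleCover}, additivity of both parameters over blocks, passage to a minimal counterexample that is critically $k$-frustrated, non-decomposable (hence loopless with no oppositely-signed parallel edges by Observation~\ref{obs:loops}) and irreducible. The un-subdividing step is also fine, essentially because a $(K_4,-)$-subdivision is a simple subgraph and a negative cycle of $(G_0,\sigma_0)$ uses at most one edge of any multiedge, so both the obstruction and the packing transfer between a signed graph and its subdivisions.

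The genuine gap is the one you name yourself: the entire content of the theorem sits in the ``augmentation'' paragraph, and nothing there is precise enough to check. You do not define the alternating walks or the invariant they maintain, you do not prove that a blocked walk isolates four branch vertices joined by six internally disjoint paths realizing the sign pattern of $(K_4,-)$ rather than some other terminal configuration, and you do not say how the $2$- and $3$-edge-cut reductions interact with the fixed packing $\mathcal{P}$ and cover $F$ (note that contracting or rerouting across a cut can destroy both). Since you explicitly defer this step to ``essentially the argument of Geelen and Guenin,'' the proposal reduces the theorem to itself rather than proving it; I also would not take for granted that their proof has the matching-augmentation shape you sketch, as results of this type are typically obtained through the structural characterization of signed graphs with no $(K_4,-)$-subdivision and a verification of the min--max on each structural piece. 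Your two closing remarks (the translation to edge-disjoint odd cycles by subdividing positive edges, and the Eulerian consequence of Observation~\ref{obs:cycleunion} in the critical case) are correct but do not close this gap.
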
 
	
	We have seen that each of the families $\mathcal{L}^*(1)$ and $\mathcal{L}^*(2)$ is quite small and is precisely described. For $k \geq 3$, we conjecture the following. 

	\begin{conjecture}\label{conj:Lk}
		The set $\mathcal{L}^*(k)$ is finite for any positive integer $k$. 
	\end{conjecture}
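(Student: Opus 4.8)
The plan is to fix $k$ and bound the number of edges of every $(G,\sigma)\in\mathcal{L}^*(k)$ by a function of $k$ alone; since there are only finitely many signed graphs, up to switching equivalence and isomorphism, on a bounded number of edges with no loops and no parallel edges of different signs, such a bound immediately yields $|\mathcal{L}^*(k)|<\infty$. Throughout I would fix a minimum signature $\sigma$, so that $|E^-_\sigma|=k$ and, by the remark following Theorem~\ref{thm:characterizations}, $d^-(X)\le d^+(X)$ for every edge-cut $\partial(X)$. The cases $k\le 2$ are already settled by Theorem~\ref{thm:2family}, so I would assume $k\ge 3$.

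The first step is a structural anchor. If $(G,\sigma)$ contained $k$ edge-disjoint negative cycles, then by Observation~\ref{obs:cycleunion} it would be their edge-disjoint union, hence $(1,\dots,1)$-decomposable and thus decomposable, contradicting $(G,\sigma)\in\mathcal{L}^*(k)$. So $(G,\sigma)$ has no $k$ edge-disjoint negative cycles, and the contrapositive of Theorem~\ref{thm:Geelen-Guenin-Schrijver1} forces $(G,\sigma)$ to contain a $(K_4,-)$-subdivision. Every member of $\mathcal{L}^*(k)$ is therefore built around at least one negatively signed $K_4$-subdivision; more generally, the maximum number $\nu$ of edge-disjoint negative cycles satisfies $1\le\nu\le k-1$. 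This confines all of the ``unbalancedness'' to a region controlled by the $k$ negative edges.

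The second step is to bound the maximum degree and the edge multiplicities. Since $\sigma$ is minimum, at most $k$ edges at any vertex are negative, so it suffices to bound the number of positive edges at a vertex $v$. Here I would exploit criticality in the form of Theorem~\ref{thm:characterizations}(3): each positive edge lies in an equilibrated cut. The idea is that a vertex carrying too many positive edges, or two vertices joined by a high-multiplicity positive multiedge, produces either two positive edges playing interchangeable roles in every minimal negative-cycle cover, so that one of them lies in no such cover and hence is not critical (contradicting Theorem~\ref{thm:characterizations}(2) via Lemma~\ref{lem:NegativeCycleCover}), or else an equilibrated cut that peels off a critically $k'$-frustrated piece (contradicting non-decomposability). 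Turning ``interchangeable roles'' into a genuine contradiction and controlling how several equilibrated cuts interact is the first real technical burden, but it is plausibly within reach and should give $\Delta(G)\le f(k)$ together with bounded multiplicities.

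The final and decisive step is to bound the number of vertices. Irreducibility already rules out internal degree-$2$ vertices, so $\delta(G)\ge 3$; what remains is to exclude arbitrarily large bounded-degree subgraphs that keep $(G,\sigma)$ simultaneously critical and non-decomposable. The hope is that, because only $k$ edges are negative, the positive part $(G,+)$ can attach to the bounded ``negative core'' in only boundedly many ways, and that any large balanced region could be rerouted or contracted to yield either a reducible configuration (contradicting irreducibility) or a decomposition along an equilibrated cut (contradicting non-decomposability). I expect this vertex bound to be the main obstacle: it is precisely the step that resists a general argument, which is why one settles here for the special cases $\nu=1$ and planarity rather than the full statement. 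A complete proof appears to require either a global understanding of how criticality constrains large balanced parts, or a well-quasi-ordering of signed graphs under a minor-like relation compatible with the frustration index, under which $\mathcal{L}^*(k)$ would be forced to form an antichain; developing either tool is, in my view, where the genuine difficulty of Conjecture~\ref{conj:Lk} lies.
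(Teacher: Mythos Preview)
The statement is Conjecture~\ref{conj:Lk}, and the paper does not prove it; it is posed as an open problem, with only the restricted classes $\mathcal{S}^*(3)$ and $\mathcal{P}^*(3)$ characterized in Sections~\ref{sec:S*} and~\ref{sec:planar}. There is therefore no paper proof to compare against, and your proposal is explicitly a strategy outline rather than a proof, as you yourself acknowledge in your final paragraph.

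The genuine gaps are exactly where you locate them, but they are more serious than ``plausibly within reach''. Your Step~2, the degree bound $\Delta(G)\le f(k)$, is itself Conjecture~\ref{conj:degree2k} of the paper, established there only under the additional hypothesis that $(G,\sigma)$ has no $(K_5,-)$-minor (Section~\ref{sec:degree}); the heuristic you sketch about ``interchangeable'' positive edges does not go through, since two positive edges at a vertex can lie in genuinely different equilibrated cuts and need not be exchangeable in any minimum negative-cycle cover. Your Step~3, bounding $|V(G)|$ given bounded degree, has no known argument: irreducibility and non-decomposability do not obviously prevent large balanced regions, and the paper's planar vertex bound rests entirely on the special fact (Theorem~\ref{thm:NoPositiveFace}) that every face is negative, for which there is no analogue in general. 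Step~1 is correct and is indeed used in the paper (e.g.\ in the proof of Theorem~\ref{thm:NoPositiveFace}), but it anchors only a bounded subgraph and says nothing about the size of the rest of $G$. In short, what you have written is a reasonable map of the obstacles, not a proof, and the paper makes no stronger claim.
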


	A special subclass $\mathcal{S}^*(k)$ of $\mathcal{L}^*(k)$ consists of those elements $(G, \sigma)$ in $\mathcal{L}^*(k)$ satisfying that for each integer $m\leq k$, every critically $m$-frustrated subgraph of $(G, \sigma)$ is non-decomposable. A relaxation of Conjecture~\ref{conj:Lk} is that the set $\mathcal{S}^*(k)$ is finite.

	\begin{conjecture}\label{conj:Sk}
		The set $\mathcal{S}^*(k)$ is finite for any positive integer $k$. 
	\end{conjecture}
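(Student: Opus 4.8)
I would attack Conjecture~\ref{conj:Sk} by induction on $k$, the target being an explicit bound $f(k)$ on $|E(G)|$ for every $(G,\sigma)\in\mathcal{S}^*(k)$; since every member of $\mathcal{L}^*(k)$ is irreducible and hence has minimum degree at least $3$, such a bound also bounds $|V(G)|$ and yields finiteness. The base cases are available: $\mathcal{S}^*(m)\subseteq\mathcal{L}^*(m)$, and $\mathcal{L}^*(1)$ and $\mathcal{L}^*(2)$ are the finite lists of Theorem~\ref{thm:2family}; the results of this paper add evidence at $k=3$. So the work is the inductive step, assuming $\mathcal{S}^*(m)$ is finite for all $m<k$.

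The first ingredient is that every $(G,\sigma)\in\mathcal{L}^*(k)$, in particular every member of $\mathcal{S}^*(k)$, contains a $(K_4,-)$-subdivision: if not, then by Theorem~\ref{thm:Geelen-Guenin-Schrijver1} it contains $k$ edge-disjoint negative cycles, and Observation~\ref{obs:cycleunion} then makes it $(1,\dots,1)$-decomposable, contradicting non-decomposability. Fix an inclusion-minimal such subdivision $(H,\sigma)$; by definition $\ell(H,\sigma)=2$, and reducing $H$ returns $(K_4,-)$, the unique element of $\mathcal{L}^*(2)$. Non-decomposability says that $(H,\sigma)$ cannot split off as an edge-disjoint critical component, and the stronger $\mathcal{S}^*$ hypothesis forbids \emph{any} edge-disjoint decomposition of $(G,\sigma)$ into critical pieces while forcing every critically $m$-frustrated subgraph met in the analysis ($m<k$) to be non-decomposable, hence, by induction, of bounded size. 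The plan is to use $(H,\sigma)$, or a negative edge of a minimum signature threading through it, as an anchor around which to peel $(G,\sigma)$ down to a configuration that the induction hypothesis already controls.

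Concretely, I would fix a minimum signature $\pi$ with $E^-_\pi=\{e_1,\dots,e_k\}$; by Theorem~\ref{thm:characterizations} a prescribed edge can be taken negative in $\pi$, and then every negative cycle of $(G,\pi)$ meets $E^-_\pi$ while every positive edge lies in an equilibrated cut under $\pi$. Deleting one negative edge, say $e_1$, gives a $(k-1)$-frustrated signed graph in which $E^-_\pi\setminus\{e_1\}$ is still a minimum negative-cycle cover; reducing $(G-e_1,\sigma)$ to an irreducible critically $(k-1)$-frustrated signed subgraph $(G',\sigma)$ and tracking which edges, and how much of the $(K_4,-)$-structure, survive should, if the reduction stays local, bound $|E(G')|$ via the induction hypothesis and hence bound $|E(G)|$.

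The main obstacle --- and the reason the statement is still only a conjecture --- lies exactly at this reduction: the irreducible critically $(k-1)$-frustrated subgraph $(G',\sigma)$ obtained from $(G-e_1,\sigma)$ need not belong to $\mathcal{S}^*(k-1)$, nor even be non-decomposable, so the induction hypothesis is not directly applicable, and it is unclear how much of $(G,\sigma)$ can be invisible to $(G',\sigma)$. What seems to be required is a quantitative, structural strengthening of Theorem~\ref{thm:Geelen-Guenin-Schrijver1} --- a min--max description of packing versus covering of negative cycles in which $(K_4,-)$-subdivisions are the only obstruction to equality, each contributing a bounded local defect --- plausibly via the excluded-minor theory of signed graphs (weakly bipartite signed graphs, absence of an odd-$K_5$ minor). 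Lacking such a tool, the realistic route, and the one pursued here, is to confirm the conjecture at $k=3$ under extra hypotheses (planarity, or no two edge-disjoint negative cycles) by an explicit finite analysis, which already gives substantial support for Conjectures~\ref{conj:Lk} and~\ref{conj:Sk}.
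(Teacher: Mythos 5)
This statement is a \emph{conjecture}: the paper offers no proof of it for general $k$, only a verification for $k=3$, so there is no ``paper's own proof'' to match your argument against. Your proposal is candid about this --- you sketch an induction on $k$ anchored on a $(K_4,-)$-subdivision, identify exactly where it breaks (the irreducible critically $(k-1)$-frustrated subgraph obtained after deleting a negative edge need not be non-decomposable, so the induction hypothesis does not apply), and correctly conclude that only the $k=3$ case is within reach. That self-assessment is accurate, and your preliminary observations (every member of $\mathcal{L}^*(k)$ contains a $(K_4,-)$-subdivision by Theorem~\ref{thm:Geelen-Guenin-Schrijver1} and Observation~\ref{obs:cycleunion}) are sound.

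What you do not exploit, and what makes the $\mathcal{S}^*$ subclass genuinely more tractable than $\mathcal{L}^*(k)$, is Proposition~\ref{charact_S*}: an irreducible critically $k$-frustrated signed graph lies in $\mathcal{S}^*(k)$ if and only if it contains \emph{no pair of edge-disjoint negative cycles}. Combined with Theorem~\ref{thm:CS*2}, this forces every member of $\mathcal{S}^*(k)$ ($k\geq 2$) to be a cubic projective-plane graph whose $k$ negative edges are exactly the edges through the crosscap. This is the paper's actual route for $k=3$: one works with the plane graph $G'=G-E^-_\sigma$, classifies its faces by how many of the $2k$ attachment vertices they meet (Lemmas~\ref{lem:2or2k-2}--\ref{lem:1-face2-face}), and closes with Euler's formula to pin down the two members of $\mathcal{S}^*(3)$. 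If you want to attack the general conjecture, this projective-planar framework --- cubic, bounded number of face types, Euler's formula --- is a far more promising starting point than the $(K_4,-)$-subdivision anchor, since it converts the problem into a finite combinatorial analysis of face incidences rather than an induction whose hypothesis you cannot invoke.
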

	
	A restriction of Conjecture~\ref{conj:Lk} to signed plane graphs is also of special interest. More precisely, we ask:
	
	\begin{conjecture}\label{conj:Pk}
		Every non-decomposable critically $k$-frustrated signed plane graph has exactly $2k$ facial cycles each of which is a negative cycle. 
	\end{conjecture}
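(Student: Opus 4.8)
The plan is to translate the statement through planar duality into the language of $T$-joins and $T$-cuts, and then to run a criticality argument on the dual. Let $(G,\sigma)$ be irreducible, non-decomposable and critically $k$-frustrated with $G$ plane, and choose $\sigma$ to be a minimum signature, so $|E^-_\sigma|=k$. First I would record the easy reductions: $(G,\sigma)$ has no bridge (a bridge lies on no cycle, so deleting it cannot drop $\ell$) and no cut vertex (a cut vertex splits $(G,\sigma)$ into two edge-disjoint critically frustrated signed subgraphs and hence forces decomposability), so $(G,\sigma)$ is $2$-connected and every facial walk is a cycle. Now pass to the planar dual $G^*$ and let $T\subseteq V(G^*)$ be the set of faces of $G$ whose facial cycle is negative; counting incidences of negative edges with faces shows $|T|$ is even, and the usual parity argument shows that a cycle of $G$ is negative exactly when the corresponding bond of $G^*$ is a $T$-cut. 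By Lemma~\ref{lem:NegativeCycleCover} this gives $\ell(G,\sigma)=\tau_T(G^*)$, the minimum size of a $T$-join of $G^*$, with $E^-_\sigma$ a minimum $T$-join; and by Theorem~\ref{thm:characterizations}, ``critically $k$-frustrated'' says precisely that every edge of $G^*$ lies in some minimum $T$-join. In this language the conjecture asserts that $G^*$ has exactly $2k$ vertices and that $T=V(G^*)$.

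The next step is the upper bound together with ``almost everything is negative''. Every negative face, being a negative cycle, must contain an edge of the cover $E^-_\sigma$; since each of the $k$ edges of $E^-_\sigma$ lies on exactly two faces, at most $2k$ faces meet $E^-_\sigma$, so $(G,\sigma)$ has at most $2k$ negative faces, i.e.\ $|T|\le 2k$. For the complementary direction I would use non-decomposability: edge-disjoint negative cycles of $(G,\sigma)$ are the same as edge-disjoint $T$-cuts of $G^*$, and by Observation~\ref{obs:cycleunion} a family of $k$ edge-disjoint negative cycles would exhibit $(G,\sigma)$ as a $(1,\dots,1)$-decomposition, impossible for $k\ge 2$. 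So the maximum number of edge-disjoint $T$-cuts in $G^*$ is strictly smaller than $\tau_T(G^*)$; we are squarely in the regime where the planar $T$-join/$T$-cut min--max has content, and I would try to exploit Seymour's results on $T$-joins in planar graphs (and their refinements) to constrain how the at-most-$2k$ faces fit together around $E^-_\sigma$.

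The heart of the matter --- and the step I expect to be the real obstacle --- is to upgrade ``at most $2k$ negative faces'' to ``all faces negative and exactly $2k$ of them''. My plan is a minimality argument: decompose the minimum $T$-join $E^-_\sigma$ into $|T|/2$ edge-disjoint paths pairing up the vertices of $T$, and argue that the presence of a positive face, or of a face meeting two edges of $E^-_\sigma$, lets one reroute one of these paths across that face to produce a separating equilibrated edge-cut --- which, via Theorem~\ref{thm:characterizations}(3), displays $(G,\sigma)$ as a decomposition, or lets one ``un-subdivide'' a multiedge, contradicting non-decomposability or irreducibility. Converting these local rerouting moves into a clean global contradiction for every $k$ is exactly what does not go through cheaply, which is presumably why Conjecture~\ref{conj:Pk} is stated only as a conjecture and the planar case is settled here only for $k=3$ by a finite analysis of the possible face patterns. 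Thus the realistic output of this plan for general $k$ is a reduction of Conjecture~\ref{conj:Pk} to a statement of the form ``$\nu_T=\tau_T$ in a planar graph unless the $T$-join instance decomposes'', which I would then attack by induction on $|E(G^*)|$: split at tight $T$-cuts and at small edge-cuts, and use Theorem~\ref{thm:Geelen-Guenin-Schrijver1} to handle the base case in which no $(K_4,-)$-subdivision is present, where the frustration index is already witnessed by edge-disjoint negative cycles.
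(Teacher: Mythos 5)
You should first note that the statement you were asked to prove is stated in the paper as a \emph{conjecture} (Conjecture~\ref{conj:Pk}); the paper itself offers no proof for general $k$ and only verifies the planar case for $k=3$ (Theorem~\ref{thm:NoPositiveFace}). Your submission is, by your own account, a plan rather than a proof, and it is right to be: the step you flag as ``the real obstacle'' --- upgrading ``at most $2k$ negative faces'' to ``every face is negative and there are exactly $2k$ of them'' --- is precisely the entire content of the conjecture, and your rerouting/induction scheme for it is not carried out. So the honest verdict is: no complete argument is given, and none exists in the paper to compare it against. What you do establish soundly is the easy half (Observation~\ref{obs:FacialNegativeCycles} in the paper: at most $2k$ negative faces, with equality forcing these to be all the faces), the $2$-connectivity reductions, and the correct dual dictionary: negative cycles of a signed plane graph correspond to $T$-cuts of $G^*$ with $T$ the set of negative faces, equivalent signatures correspond to $T$-joins, and $\ell(G,\sigma)$ equals the minimum $T$-join size, so the conjecture becomes ``$|V(G^*)|=2k$ and $T=V(G^*)$''.

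For the case the paper does settle, your route is genuinely different from the paper's. The proof of Theorem~\ref{thm:NoPositiveFace} is primal: non-decomposability plus Theorem~\ref{thm:Geelen-Guenin-Schrijver1} yields a $(K_4,-)$-subdivision, a minimum signature is chosen with one negative edge $e_1$ off that subdivision, and the assumption that some facial cycle carries two negative edges is refuted by a short case analysis on how the five relevant negative faces share edges --- one case produces an edge-cut with three negative and two positive edges (contradicting minimality of $\sigma$), the other produces, via Lemma~\ref{lem:3NegativeCycles}, an edge-disjoint pair consisting of a critically $2$-frustrated subgraph and a negative cycle (contradicting non-decomposability). If you want to push your $T$-join program, the concrete gap to close is exactly your proposed min--max statement ``$\nu_T<\tau_T$ in a planar graph forces a positive face or a decomposition'': Seymour's theorem gives $\nu_T=\tau_T$ when $T$ consists of \emph{all} vertices of a planar graph (equivalently, in the Eulerian setting of Theorem~\ref{Schrijver2}), but you need the converse direction --- that a vertex of $G^*$ outside $T$ (a positive face) can be exploited --- and neither your rerouting sketch nor the cited results supply that step. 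Until it is supplied, the argument does not constitute a proof of Conjecture~\ref{conj:Pk}, nor even an alternative proof of the $k=3$ case.
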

	
	A similar conjecture on the structure of critically $k$-frustrated signed graphs is the following.  
	
	\begin{conjecture}\label{conj:degree2k}
		Every critically $k$-frustrated signed graph $(G, \sigma)$ satisfies $\Delta(G) \leq 2k$. Moreover, $\Delta(G)= 2k$ can only happen if $G$ consists of $k$ negative cycles pairwise edge-disjoint but all containing the same vertex.
	\end{conjecture}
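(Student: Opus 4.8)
The plan is to split the problem into an easy case governed by Theorem~\ref{thm:Geelen-Guenin-Schrijver1}, which already yields the ``moreover'' part, and a hard case attacked through minimum signatures and equilibrated cuts via Theorem~\ref{thm:characterizations}. By the subdivision proposition, together with a short argument bounding the multiplicity of any multiedge of a critically $k$-frustrated signed graph by $k$ (if an edge $e$ lies in a $t$-multiedge then so do all its parallel copies, hence the minimum signature realizing $e$ negative, provided by Theorem~\ref{thm:characterizations}(2), has at least $t$ negative edges, so $t\le k$; consequently a subdivision vertex has degree $2t\le 2k$), it suffices to treat irreducible $(G,\sigma)$. If $(G,\sigma)$ is decomposable we induct on $k$: in an edge-decomposition into critically $k_i$-frustrated parts $(G_i,\sigma)$ with $\sum_i k_i=k$, every vertex $v$ has $d_G(v)=\sum_i d_{G_i}(v)\le\sum_i 2k_i=2k$, and if equality holds then $d_{G_i}(v)=2k_i$ for each $i$, so by induction $v$ is the common vertex of $k_i$ pairwise edge-disjoint negative cycles in each $G_i$, and the union of these families is a family of $k$ pairwise edge-disjoint negative cycles through $v$. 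So from now on assume $(G,\sigma)\in\mathcal{L}^*(k)$ is irreducible and non-decomposable.

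If $(G,\sigma)$ has no $(K_4,-)$-subdivision, then Theorem~\ref{thm:Geelen-Guenin-Schrijver1} gives $k$ pairwise edge-disjoint negative cycles, so by Observation~\ref{obs:cycleunion} $G$ is precisely their union; each cycle contributes at most $2$ to the degree of any vertex, whence $\Delta(G)\le 2k$, with equality exactly when some vertex lies on all $k$ of them. This settles the conjecture in this case, and in particular the ``moreover'' clause in full. Hence it remains to prove, for the surviving $(G,\sigma)$, that $\Delta(G)\le 2k$ and that $\Delta(G)=2k$ would force $k$ pairwise edge-disjoint negative cycles in $(G,\sigma)$ --- which by Observation~\ref{obs:cycleunion} contradicts non-decomposability, so in fact $\Delta(G)\le 2k-1$ here, completing the conjecture. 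Note also that, in this surviving case, a $(K_4,-)$-subdivision is present: a non-decomposable critically $k$-frustrated signed graph with $k\ge 2$ cannot contain $k$ edge-disjoint negative cycles by Observation~\ref{obs:cycleunion}, so Theorem~\ref{thm:Geelen-Guenin-Schrijver1} applies contrapositively.

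For the bound, fix a minimum signature $\sigma$, so $|E^-_\sigma|=k$, and a vertex $v$ with $d(v)=\Delta$. We use two facts: every edge-cut $\partial(X)$ satisfies $d^-(X)\le d^+(X)$, so every equilibrated cut has at most $2k$ edges; and, by Theorem~\ref{thm:characterizations}(3), every positive edge of $(G,\sigma)$ lies in an equilibrated cut, switching at which preserves $|E^-|$ and hence returns a minimum signature. Now choose $\sigma$ maximizing $d^-_\sigma(v)$ over all minimum signatures. Suppose $d^+_\sigma(v)>d^-_\sigma(v)$; pick a positive edge $vu$ and an equilibrated cut $\partial(Z)$ with $v\notin Z\ni u$ containing $vu$. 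Switching at $Z$ flips $vu$ to negative and flips the other edges of $\partial(Z)$ at $v$, changing $d^-_\sigma(v)$ by $|\{vw:w\in Z,\ \sigma(vw)=+\}|-|\{vw:w\in Z,\ \sigma(vw)=-\}|\ge 1-|\{vw:w\in Z,\ \sigma(vw)=-\}|$. If $Z$ can be chosen to cross no negative edge at $v$ --- the crux --- this is strictly positive, contradicting the choice of $\sigma$; hence the extremal $\sigma$ satisfies $d^-_\sigma(v)=d^+_\sigma(v)=\Delta/2$, and $\Delta/2=d^-_\sigma(v)\le|E^-_\sigma|=k$ gives $\Delta\le 2k$. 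When $\Delta=2k$, this extremal $\sigma$ has all $k$ of its negative edges incident to $v$ and $\partial(v)$ is an equilibrated cut of the maximum possible size $2k$; a routing argument (each negative edge $vw_i$ closes to a negative cycle through $v$ via a positive edge of $v$ and a positive path between the far ends, avoiding $v$, and since $d(v)=2k$ these $k$ cycles can be made pairwise edge-disjoint) then exhibits the required packing, and $G$ is its union by Observation~\ref{obs:cycleunion}.

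The main obstacle is exactly the highlighted step: choosing the equilibrated cut $\partial(Z)$ witnessing a positive edge at $v$ so that it crosses no negative edge at $v$. Equilibration of a cut is not preserved under symmetric difference of the defining vertex sets, so the usual uncrossing machinery does not transfer off the shelf, and one must bring in additional structure --- presumably a minimal $(K_4,-)$-subdivision through or near $v$ (which exists, as noted), together with induction on $|E(G)|$ or on $\Delta$. An equivalent and perhaps more tractable formulation of the whole difficulty is: show that $\Delta(G)\ge 2k$ forces a packing of $k$ pairwise edge-disjoint negative cycles (hence $\Delta(G)=2k$ and $G$ is their union), which reduces everything to the easy case; this is precisely where the conjecture is genuinely open.
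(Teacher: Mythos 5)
The statement you are asked to prove is labelled a \emph{conjecture} in the paper, and the paper does not prove it: it only establishes the special case of $(K_5,-)$-minor-free signed graphs (Section~\ref{sec:degree}), by showing via Schrijver's theorem that such a critically $k$-frustrated signed graph admits a negative cycle double cover of order $2k$, whence each vertex meets at most $2k$ cycle-incidences and so has degree at most $2k$. Your attempt is therefore correctly calibrated: you cannot have proved the full statement, and to your credit you say so explicitly. Your reductions are sound and constitute genuine partial progress along a route different from the paper's. The bound $t\le k$ on multiedge multiplicity (parallel edges switch together, so a minimum signature making one of them negative makes all $t$ negative), the induction over a decomposition using $d_G(v)=\sum_i d_{G_i}(v)$, and the disposal of the $(K_4,-)$-subdivision-free case via Theorem~\ref{thm:Geelen-Guenin-Schrijver1} and Observation~\ref{obs:cycleunion} are all correct, and they even settle the ``moreover'' clause in the cases they cover. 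This is complementary to, not a rederivation of, the paper's partial result.

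The genuine gap is exactly the one you flag: in the surviving case (irreducible, non-decomposable, containing a $(K_4,-)$-subdivision), your switching argument needs an equilibrated cut $\partial(Z)$ through a chosen positive edge at $v$ that crosses no negative edge incident with $v$, and nothing in Theorem~\ref{thm:characterizations} guarantees such a cut; equilibrated cuts do not uncross, so the extremal-signature argument stalls precisely where the conjecture is open. A second, smaller gap is the ``routing argument'' at the end: even granting $d^-_\sigma(v)=k$ and $\partial(v)$ equilibrated of size $2k$, the existence of $k$ pairwise edge-disjoint positive paths in $G-v$ closing the $k$ negative edges into edge-disjoint negative cycles is an edge-disjoint-paths assertion that you do not justify. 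So the proposal is an honest and partially correct reduction of the conjecture, not a proof of it; if you want a provable statement in this direction, the paper's double-cover argument for $(K_5,-)$-minor-free graphs is the currently available result.
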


	In this work, we verify Conjectures~\ref{conj:Sk} and \ref{conj:Pk} for $k=3$. The case $k=3$ of Conjecture~\ref{conj:Lk} will be addressed in a forthcoming paper.
    We show that the condition of being non-decomposable is necessary in Conjectures~\ref{conj:Pk} and \ref{conj:Lk}. To support Conjecture~\ref{conj:degree2k}, we prove it is true for a special class of signed graphs.

    \medskip
	The rest of the  paper is organized  as follows. In Section~\ref{sec:S*} we show that the class $\mathcal{S}^*(3)$ contains exactly two elements.
	In Section~\ref{sec:planar} we show that, up to decomposition and subdivision, there exist ten critically $3$-frustrated signed planar graphs. In Section~\ref{sec:Examples} we present a construction of non-decomposable critically frustrated signed graphs from two given such signed graphs. Moreover, we provide an infinite family of decomposable critically $3$-frustrated signed graphs showing that in both Conjectures~\ref{conj:Lk} and \ref{conj:Pk} the assumption of being non-decomposable is necessary even for $k=3$. In Section~\ref{sec:degree}, we give a family of critically $k$-frustrated signed graphs having maximum degree at most $2k$.

 \section[S3]{Characterization of the family $\mathcal{S}^*(3)$}\label{sec:S*}

	To characterize the elements of $\mathcal{S}^*(3)$, we use the following results.
	
	\begin{proposition}{\em \cite{CS22}}\label{charact_S*}
		Let $(G,\sigma)$ be an irreducible critically $k$-frustrated signed graph for some positive integer $k$. Then $(G,\sigma) \in \mathcal{S}^*(k)$ if and only if $(G,\sigma)$ contains no pair of edge-disjoint negative cycles.
	\end{proposition}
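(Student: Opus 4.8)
The proof rests on one elementary observation: a critically $m$-frustrated signed graph with $m\ge 1$ is unbalanced and hence contains a negative cycle, so a decomposable critically frustrated signed graph --- and likewise any signed subgraph of $(G,\sigma)$ that is decomposable and critically $m$-frustrated --- contains two edge-disjoint negative cycles (pick one inside each of two distinct parts of a decomposition). For the ``if'' direction I argue contrapositively: if $(G,\sigma)\notin\mathcal{S}^*(k)$ then, since $(G,\sigma)$ is irreducible, either it is itself decomposable, or it lies in $\mathcal{L}^*(k)$ but admits a decomposable critically $m$-frustrated subgraph for some $m\le k$; in both cases the observation yields a pair of edge-disjoint negative cycles in $(G,\sigma)$, so the absence of such a pair forces $(G,\sigma)\in\mathcal{S}^*(k)$.

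For the ``only if'' direction, assume $(G,\sigma)$ has two edge-disjoint negative cycles; then $\ell(G,\sigma)\ge 2$, so $k\ge 2$. Among all pairs of edge-disjoint negative cycles of $(G,\sigma)$ I choose one, $(C_1,C_2)$, minimizing $|V(C_1)\cap V(C_2)|$, and aim to show that $(C_1\cup C_2,\sigma)$ is a decomposable critically $2$-frustrated subgraph of $(G,\sigma)$; since $2\le k$, this contradicts $(G,\sigma)\in\mathcal{S}^*(k)$ and proves the contrapositive. When $|V(C_1)\cap V(C_2)|\le 1$ this is immediate: $C_1\cup C_2$ is then two vertex-disjoint negative cycles, or two negative cycles meeting in a single vertex, so it is $(1,1)$-decomposable, and its only cycles are $C_1$ and $C_2$, so deleting any edge leaves exactly one negative cycle and drops the frustration index to $1$.

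So assume $C_1$ and $C_2$ share $r\ge 2$ vertices. Here I run an uncrossing argument. Let $u,v$ be two common vertices consecutive along $C_1$, let $P$ be the subpath of $C_1$ between $u$ and $v$ that contains no further common vertex, put $P'=C_1-P$, and let $Q_1,Q_2$ be the two subpaths of $C_2$ between $u$ and $v$, labelled so that $P\cup Q_1$ is negative --- exactly one of $P\cup Q_1,\,P\cup Q_2$ is, since the product of their signs is $\sigma(C_2)=-$. From $\sigma(C_1)=\sigma(C_2)=-$ one gets $\sigma(P')=-\sigma(P)$ and $\sigma(Q_2)=-\sigma(Q_1)$, so the product of all edge-signs of the graph $P'\cup Q_2$ is $\sigma(P)\sigma(Q_1)=-$; as $P'\cup Q_2$ has all degrees even, it decomposes into edge-disjoint cycles, an odd number of which are negative, so it contains a negative cycle $C_2'$, edge-disjoint from the negative cycle $P\cup Q_1$. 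Letting $W_2$ be the set of common vertices of $C_1,C_2$ lying in the interior of $Q_2$, a short count of vertices gives $|V(P\cup Q_1)\cap V(C_2')|\le r-|W_2|$, so minimality of $r$ forces $W_2=\varnothing$. Since this holds for every pair of consecutive common vertices of $C_1$, every subpath of $C_1$ joining consecutive common vertices runs parallel to a subpath of $C_2$ joining the same two; consequently $C_1$ and $C_2$ meet the $r$ common vertices $x_1,\dots,x_r$ in one and the same cyclic order, with $C_1$ using a path $R_i$ and $C_2$ a path $R_i'$ between consecutive $x_i,x_{i+1}$, all $2r$ of these paths internally disjoint.

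A last sign computation --- using $\sigma(P\cup Q_2)=+$ --- shows that each ``rung'' $R_i\cup R_i'$ is a positive cycle, i.e.\ $\sigma(R_i)=\sigma(R_i')$ for every $i$; hence every cycle running once around $C_1\cup C_2$ and using one of $R_i,R_i'$ at each position is negative, while every rung is positive. From this one reads off that $\ell(C_1\cup C_2,\sigma)=2$ (an edge of $R_i$ together with an edge of $R_i'$ meets every negative cycle), that deleting any edge confines all surviving negative cycles to the parallel path at that position and so drops the frustration index to $1$ --- so $(C_1\cup C_2,\sigma)$ is critically $2$-frustrated --- and that splitting the $2r$ paths into two complementary ``around'' cycles exhibits it as $(1,1)$-decomposable, completing the claim. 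I expect the uncrossing step to be the main obstacle: one has to make sure $C_2'$ is a genuine negative cycle rather than merely an ``unbalanced even subgraph'', and to track precisely which common vertices land on which side when bounding $|V(P\cup Q_1)\cap V(C_2')|$; once the doubled-cycle structure is secured, the remaining verifications are routine sign-counting.
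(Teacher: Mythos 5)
Your proof is correct. Note first that the paper does not actually prove this proposition---it is imported from \cite{CS22}---so there is no in-paper argument to compare against; judged on its own, your argument is complete. The ``if'' direction is the easy half and you handle it correctly: failure of membership in $\mathcal{S}^*(k)$ (for an irreducible critically $k$-frustrated graph) means some critically frustrated subgraph, possibly $(G,\sigma)$ itself, is decomposable, and any two parts of a decomposition are edge-disjoint, unbalanced, and hence each contain a negative cycle. The substance is the ``only if'' direction, and your uncrossing goes through: $(P\cup Q_1,\,C_2')$ is a genuine pair of edge-disjoint negative cycles (your cycle-decomposition of the even graph $P'\cup Q_2$ correctly extracts a negative \emph{cycle}, which is the point you rightly flag as delicate), and the set $V(P\cup Q_1)\cap V(P'\cup Q_2)$ is exactly $\{u,v\}$ together with the common vertices interior to $Q_1$, so it has size $r-|W_2|$ and minimality forces $W_2=\emptyset$; the doubled-cycle structure, the fact that its only cycles are the $r$ rungs and the $2^r$ ``around'' cycles, and the resulting computation that the union is $(1,1)$-decomposable and critically $2$-frustrated are then as routine as you claim. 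One small point worth making explicit: when $r=2$ both arcs of $C_2$ join the same two common vertices, so the pairing $R_i\leftrightarrow R_i'$ is not determined by the incidence structure and must be fixed by the sign condition $\sigma(R_i)=\sigma(R_i')$ (which your use of $\sigma(P\cup Q_2)=+$ does implicitly); with the wrong pairing a ``rung'' can be negative, though the configuration is still decomposable. Finally, it is worth observing why your explicit uncrossing earns its keep: the tempting shortcut of taking a critically $2$-frustrated subgraph of $C_1\cup C_2$ and invoking Theorem~\ref{thm:2family} does not immediately work, since that subgraph could a priori be a $(K_4,-)$-subdivision, which is non-decomposable; your argument is what rules this out by producing a decomposable critical subgraph directly.
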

	
	Using this fact and the characterization of signed graphs in which every pair of negative cycles intersect, provided in \cite{Lu2018}, the elements of $\mathcal{S}^*(k)$ can be characterized as follows.
	
	\begin{theorem}{\em \cite{CS22}}\label{thm:CS*2}
		Let $(G, \sigma)$ be a signed graph  in $\mathcal{S}^*(k)$ where $\sigma$ is a minimum signature with $E^-_\sigma = \{ x_1y_1, \dots, x_ky_k\}$. Then we have the following.
		\begin{enumerate}[label=(\arabic*)]
			\item\label{Con1} Either $k=1$, in which case $(G,\sigma)$ is a negative loop. 
			\item\label{Con2} Or $k\geq 2$, $G$ is a cubic projective plane graph where the edges passing through the cross cap are those of $E^-_\sigma$.	
		\end{enumerate}		
	\end{theorem}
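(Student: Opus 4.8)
The plan is to feed Proposition~\ref{charact_S*} into the structural classification of \cite{Lu2018} and then extract the precise shape of $(G,\sigma)$ from the criticality conditions of Theorem~\ref{thm:characterizations}. First, by Proposition~\ref{charact_S*}, membership in $\mathcal S^*(k)$ means that $(G,\sigma)$ is irreducible, critically $k$-frustrated, and has no two edge-disjoint negative cycles; since two vertex-disjoint negative cycles are in particular edge-disjoint, every two negative cycles of $(G,\sigma)$ share a vertex, which is exactly the hypothesis of \cite{Lu2018}. If $k=1$, then $\ell(G,\sigma)=1$ and Theorem~\ref{thm:2family} forces $(G,\sigma)=C_{-1}$, giving conclusion~\ref{Con1}, so from now on I assume $k\ge 2$. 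Then irreducibility gives $d(v)\ge 3$ for every vertex $v$, and by Observation~\ref{obs:loops} together with our standing conventions $G$ has neither loops nor parallel edges of opposite sign; moreover $\mathcal S^*(k)$ is inherited by every critically $m$-frustrated subgraph, each of which is non-decomposable, and this is the leverage that eliminates the degenerate outcomes below.

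Next I would invoke the classification of \cite{Lu2018}: up to switching, a signed graph whose negative cycles pairwise meet is one of (i) finitely many sporadic small signed graphs, (ii) a signed graph having a single vertex or a single edge lying on all of its negative cycles, or (iii) a signed graph admitting a $2$-cell embedding in the projective plane whose negative cycles are precisely its one-sided cycles. Case (ii) is ruled out by non-decomposability: if a vertex $v$ (resp.\ an edge) lies on every negative cycle, then switching $(G-v,\sigma)$ to the all-positive signature concentrates all negative edges at $v$, and peeling off one negative cycle through $v$ at a time together with Observation~\ref{obs:cycleunion} exhibits $(G,\sigma)$ as decomposable, a contradiction. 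For case (i), each sporadic signed graph is checked directly to be reducible, decomposable, or not critically $k$-frustrated, the sole survivor being $(K_4,-)$ with $k=2$, which already satisfies conclusion~\ref{Con2}. So we may assume $(G,\sigma)$ is as in (iii).

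It remains to prove that $G$ is cubic and that the embedding can be chosen so that the crosscap is crossed by exactly the edges of $E^-_\sigma$. Criticality rules out cut vertices and bridges (either would split off a critically frustrated piece, contradicting non-decomposability via Observation~\ref{obs:cycleunion}), so the embedding is $2$-connected and every facial walk is a contractible, hence positive, cycle. For cubicity I would argue by contradiction from a vertex $v$ with $d(v)\ge 4$: in a cubic graph two cycles through a common vertex automatically share an edge, whereas at a vertex of degree at least $4$ there is room to route two one-sided cycles along disjoint pairs of incident edges, and the criticality conditions of Theorem~\ref{thm:characterizations} --- every edge at $v$ lies in a minimum signature (part~(2)), and every positive edge lies in an equilibrated edge-cut (part~(3)) --- supply enough negative cycles to realize two of them as globally edge-disjoint, which is impossible. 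Hence $G$ is cubic. Finally, for any one-sided simple closed curve $\gamma$ meeting $G$ transversally the set of edges crossed by $\gamma$ is a negative-cycle cover (deleting them leaves a graph inside a disk, hence balanced), so it has size at least $\ell(G,\sigma)=k$; conversely, taking a minimum signature $\pi$ switching-equivalent to $\sigma$ with $|E^-_\pi|=k$, which by Lemma~\ref{lem:NegativeCycleCover} is a minimal negative-cycle cover, the balanced graph $G-E^-_\pi$ lies inside a disk and $\gamma$ can be chosen to cross precisely $E^-_\pi$; after switching to $\pi$ the crosscap edges are exactly $E^-_\sigma$, completing conclusion~\ref{Con2}.

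The delicate point --- and the step I expect to require the most care --- is the cubicity argument: turning ``some vertex has degree at least $4$'' into an honest pair of edge-disjoint negative cycles means controlling the global routing of two one-sided cycles through that vertex while exploiting both parts of Theorem~\ref{thm:characterizations}, so that the topology of projective embeddings and the criticality hypothesis have to be used together rather than in isolation.
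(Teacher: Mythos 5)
The paper does not actually prove this statement: Theorem~\ref{thm:CS*2} is imported verbatim from \cite{CS22}, with the remark that it follows from Proposition~\ref{charact_S*} together with the classification in \cite{Lu2018} of signed graphs whose negative cycles pairwise intersect. Your overall route --- reduce to ``no two edge-disjoint negative cycles'' via Proposition~\ref{charact_S*}, feed that into the \cite{Lu2018} trichotomy, kill the degenerate branches, and then establish cubicity and the crosscap description --- is exactly the route the authors indicate, so the architecture is right. The comparison therefore has to be against what a complete proof would require, and there your write-up has genuine gaps at the two places where all the work lives.

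First, the cubicity step is not a proof but a declared intention: ``the criticality conditions \ldots\ supply enough negative cycles to realize two of them as globally edge-disjoint'' is precisely the claim that needs an argument, and you acknowledge as much in your closing paragraph. At a vertex $v$ with $d(v)\ge 4$ you can choose two disjoint pairs of incident edges, but the two one-sided cycles you route through them live in the same projective plane and generically cross elsewhere; nothing you have written controls those other intersections, and Theorem~\ref{thm:characterizations} by itself does not hand you edge-disjointness. Second, the elimination of the blocking-vertex case is also incomplete. After switching so that all negative edges sit at $v$, each negative edge lies on some negative cycle through $v$, but those $k$ cycles may share positive edges away from $v$; ``peeling off one negative cycle at a time'' does not by itself produce the edge-disjoint pair (or the decomposition via Observation~\ref{obs:cycleunion}) that you need for the contradiction. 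A smaller but real issue of the same kind occurs at the end: to say the crosscap edges are exactly $E^-_\sigma$ you must actually exhibit an embedding in which the balanced graph $G-E^-_\sigma$ sits in a disk with the vertices $x_1,\dots,x_k,y_1,\dots,y_k$ on its boundary in an order compatible with a one-sided curve crossing precisely those $k$ edges; asserting that $\gamma$ ``can be chosen'' this way is the conclusion, not an argument. Until these three steps are filled in, the proposal is a correct plan rather than a proof.
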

	
	Let $(G, \sigma) \in \mathcal{S}^*(k)$ with $\sigma$ being a minimum signature. An embedding of $(G, \sigma)$ into the projective plane as described in Theorem~\ref{thm:CS*2} is called a \emph{canonical projective-planar embedding} of $(G,\sigma)$.
 
    Let $(G, \sigma) \in \mathcal{S}^*(k)$ be a canonically projective-planar embedded signed graph. When the choice of the minimum signature $\sigma$ is clear from the context, we will denote the subgraph $G-E^-_\sigma$ of $G$ by $G'$. Moreover, $G'$ will always be considered together with its planar embedding that is implied from Theorem~\ref{thm:CS*2}. The facial cycle of the outer face of this plane graph $G'$ will be denoted by $C_O$. One may observe that in $G'$ the vertices $x_1, \ldots, x_k, y_1, \ldots, y_k$ are all of degree $2$ and they appear on $C_O$ in this cyclic order.  
	
	Given vertices $u$ and $v$ of $C_O$, by $A_{uv}$ we denote the path on $C_O$ connecting $u$ to $v$ which is in the clockwise direction starting at $u$ and ending at $v$. When referring to a face of $G'$ we do not consider the outer face. Thus a face $F$ of $G'$ is also a face of $(G, \sigma)$ in the projective-planar embedding from which $G'$ is defined. The boundary of this face $F$, which must be a cycle, will be denoted by $C_F$. 
	
	A face of $G'$ is said to be \emph{internal} if its boundary shares no edge with $C_O$. We note that, since $G'$ is subcubic, the boundary of an internal face does not intersect $C_O$ at a vertex either. 
	In particular, the boundary of a face $F$ which is not internal shares at least two vertices with $C_O$. We classify such faces depending on how many of those common vertices are in the set $\mathcal{R}=\{x_1,...,x_k, y_1,...,y_k\}$.  More precisely, a face $F$ is said to be an \emph{$i$-face} of $G'$ if $C_F$ contains $i$ elements from the set $\mathcal{R}$. Two faces $F_1$ and $F_2$ are said to be \emph{adjacent on the boundary} if $V(C_{F_1} \cap C_{F_2}\cap C_O) \neq \emptyset$.  A face $F$ of $G'$ is called a \emph{bridge-face} if the subgraph induced by $C_F \cap C_O$ is disconnected. See Figure~\ref{fig:bridge} for an example, noting that curves represent paths that might contain more vertices.

	\begin{figure}[htbp]
		\centering
		\begin{minipage}[t]{.4\textwidth}
			\centering	
           \begin{tikzpicture}[scale=.74]
               \draw[fill=none, line width=0.4mm, blue](0,0) circle (2.5)  {};
				\draw[rotate=15] (0,2.5) node[circle, draw=black!80, inner sep=0mm, minimum size=3.5mm, fill=white] (a_{1}){};
				\draw[rotate=-15] (0,2.5) node[circle, draw=black!80, inner sep=0mm, minimum size=3.5mm, fill=white] (a_{2}){};
				\draw[rotate=-75] (0,2.5) node[circle, draw=black!80, inner sep=0mm, minimum size=3.5mm, fill=white] (a_{3}){};
				\draw[rotate=-105] (0,2.5) node[circle, draw=black!80, inner sep=0mm, minimum size=3.5mm, fill=white] (a_{4}){};
				\draw[rotate=-150] (0,2.5) node[circle, draw=black!80, inner sep=0mm, minimum size=3.5mm, fill=white] (a_{5}){};
				\draw[rotate=180] (0,2.5) node[circle, draw=black!80, inner sep=0mm, minimum size=3.5mm, fill=white] (a_{6}){};
				\draw[rotate=90] (0,2.5) node[circle, draw=black!80, inner sep=0mm, minimum size=3.5mm, fill=white] (a_{7}){};
				\draw[rotate=50] (0,2.5) node[circle, draw=black!80, inner sep=0mm, minimum size=3.5mm, fill=white] (a_{8}){};
				\foreach \i/\j in {2/3,4/5,6/7,8/1} {
					\draw[bend right=60, line width=0.4mm, blue] (a_{\i}) to  (a_{\j});
				}	
				\node[text=blue] at (.1,.2) {$F$};

			\end{tikzpicture}
			\caption{A bridge-face $F$ in $G'$}
			\label{fig:bridge}
		\end{minipage}
		\begin{minipage}[t]{.4\textwidth}
			\centering
			\begin{tikzpicture}[scale=.74]
				\draw[fill=none, line width=0.4mm, blue](0,0) circle (2.5) {};
				\draw[rotate=30] (0,2.5) node[circle, draw=black!80, inner sep=0mm, minimum size=3.5mm, fill=white] (a_{1}){\scriptsize $a_1$};
				\draw[rotate=-10] (0,2.5) node[circle, draw=black!80, inner sep=0mm, minimum size=3.5mm, fill=white] (a_{2}){\scriptsize $a_2$};
				\draw[rotate=-80] (0,2.5) node[circle, draw=black!80, inner sep=0mm, minimum size=3.5mm, fill=white] (a_{3}){\scriptsize $b_1$};
				\draw[rotate=-110] (0,2.5) node[circle, draw=black!80, inner sep=0mm, minimum size=3.5mm, fill=white] (a_{4}){\scriptsize $b_2$};
				\draw[rotate=-150] (0,2.5) node[circle, draw=black!80, inner sep=0mm, minimum size=3.5mm, fill=white] (a_{5}){};
				\draw[rotate=180] (0,2.5) node[circle, draw=black!80, inner sep=0mm, minimum size=3.5mm, fill=white] (a_{6}){};
				\draw[rotate=90] (0,2.5) node[circle, draw=black!80, inner sep=0mm, minimum size=3.5mm, fill=white] (a_{7}){};
				\draw[rotate=55] (0,2.5) node[circle, draw=black!80, inner sep=0mm, minimum size=3.5mm, fill=white] (a_{8}){};
				
				\foreach \i/\j in {2/3,4/5,8/1} {
					\draw[bend right=60, line width=0.4mm, blue] (a_{\i}) to  (a_{\j});
				}

                \draw[rotate=10] (0,2.5) node[circle, draw=black!80, inner sep=0mm, minimum size=3.5mm, fill=white] (x_{1}){\scriptsize $x_1$};
				\draw[rotate=-30] (0,2.5) node[circle, draw=black!80, inner sep=0mm, minimum size=3.5mm, fill=white] (x_{2}){\scriptsize $x_2$};	
				\draw[rotate=-55] (0,2.5) node[circle, draw=black!80, inner sep=0mm, minimum size=3.5mm, fill=white] (x_{3}){\scriptsize $x_3$};
				
				\draw[bend right=60, line width=0.4mm, blue, dotted] (a_{6}) to  (a_{7});

				\node[text=blue] at (.1,.2) {$F$};

			\end{tikzpicture}
   \caption{Proposition~\ref{prop:i-face}}
			\label{fig:Claim_i-face}
		\end{minipage}
	\end{figure}

	Note that in $(G, \sigma)$, each edge-cut with negative edges contains at least two edges of $C_O$. Furthermore, based on the cyclic order of the elements of $\mathcal{R}$ on $C_O$, we have the following observation.
	\begin{observation}\label{obs:cutG'}
		Let $(G, \sigma)$ be a canonically projective-planar embedded signed graph in $\mathcal{S}^*(k)$ for $k\geq 2$. If an edge-cut $\partial_{G}(X)$ contains exactly two edges $e_1$ and $e_2$ of $C_O$, then $d^-_{(G, \sigma)}(X)= \min \{|V(A_1) \cap \mathcal{R}|, |V(A_2) \cap \mathcal{R}| \}$ where $A_1$ and $A_2$ are the two connected components of $C_O-\{e_1, e_2\}$.
	\end{observation}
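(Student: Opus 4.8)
The plan is to reduce the statement to a purely combinatorial fact about arcs on a cycle of length $2k$ whose vertices carry the cyclic labelling $x_1,\dots,x_k,y_1,\dots,y_k$, and then settle that fact by a short translation argument.

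First I would record the general remark that for any cycle $C$ and any vertex set $X$, the number of edges of $C$ lying in $\partial_G(X)$ is even. Applying this to $C=C_O$, the hypothesis that $\partial_G(X)$ meets $C_O$ in exactly the two edges $e_1,e_2$ forces the path $C_O-\{e_1,e_2\}$ to have no edge of $\partial_G(X)$ on either of its two components $A_1$ and $A_2$. Hence each of $A_1,A_2$ lies entirely on one side of the cut; and since $e_1$ (and likewise $e_2$) joins an endpoint of $A_1$ to an endpoint of $A_2$ while $e_1\in\partial_G(X)$, those two sides are distinct. Relabelling if necessary, assume $V(A_1)\subseteq X$ and $V(A_2)\subseteq V(G)\setminus X$.

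Next I would invoke the structure from Theorem~\ref{thm:CS*2}: the negative edges of $(G,\sigma)$ are precisely the edges $x_iy_i$, and $\{x_i,y_i\}\subseteq \mathcal{R}\subseteq V(C_O)=V(A_1)\cup V(A_2)$. Therefore $x_iy_i\in\partial_{(G,\sigma)}(X)$ if and only if exactly one of $x_i,y_i$ lies in $V(A_1)$, so $d^-_{(G,\sigma)}(X)$ equals the number of indices $i$ for which the pair $\{x_i,y_i\}$ is split between $A_1$ and $A_2$. It remains to prove that this number equals $\min\{|V(A_1)\cap\mathcal{R}|,|V(A_2)\cap\mathcal{R}|\}$. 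Since $A_1$ is a subpath of $C_O$ on which the $2k$ vertices of $\mathcal{R}$ occur in the cyclic order $x_1,\dots,x_k,y_1,\dots,y_k$, the set $V(A_1)\cap\mathcal{R}$ is a contiguous block of that cyclic sequence and $V(A_2)\cap\mathcal{R}$ is the complementary block. Write $m=|V(A_1)\cap\mathcal{R}|$ and assume, without loss of generality (swapping $A_1$ and $A_2$ changes neither side of the claimed equality), that $m\le k$. Identify $\mathcal{R}$ with $\mathbb{Z}/2k\mathbb{Z}$ so that the partner of the element at position $p$ is the element at position $p+k$. A block of $m\le k$ consecutive positions and its translate by $k$ are two arcs of length $m$ whose starting points differ by $k$ on a cycle of length $2k$, hence disjoint; so the partner of every element of $V(A_1)\cap\mathcal{R}$ lies in $V(A_2)\cap\mathcal{R}$. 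Consequently no pair $\{x_i,y_i\}$ has both endpoints in $A_1$, each of the $m$ elements of $V(A_1)\cap\mathcal{R}$ lies in a distinct split pair, and there are exactly $m=\min\{|V(A_1)\cap\mathcal{R}|,|V(A_2)\cap\mathcal{R}|\}$ split pairs, as needed.

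The main obstacle is the combinatorial core of the last paragraph — specifically the arc‑disjointness step and the verification of the degenerate cases: when $e_1$ and $e_2$ share a vertex so that one of $A_1,A_2$ is a single vertex, and when one of $A_1,A_2$ meets $\mathcal{R}$ in no vertex, in which case $m=0$ and both sides of the asserted equality vanish. Once the two reductions above are in place, the remaining work is bookkeeping.
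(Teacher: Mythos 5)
Your proof is correct, and it is exactly the argument the paper has in mind: the Observation is stated without proof, justified only by the remark that the elements of $\mathcal{R}$ appear on $C_O$ in the cyclic order $x_1,\dots,x_k,y_1,\dots,y_k$, which is precisely the antipodal-pairing fact you formalize. Your write-up just makes explicit the two routine reductions (the cut splits $C_O$ into the two arcs $A_1\subseteq X$, $A_2\subseteq V(G)\setminus X$; a block of at most $k$ consecutive positions is disjoint from its translate by $k$ on $\mathbb{Z}/2k\mathbb{Z}$) that the authors leave to the reader.
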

	
	\begin{lemma}\label{lem:2or2k-2}
		Let $(G, \sigma)$ be a canonically projective-planar embedded signed graph in $\mathcal{S}^*(k)$ for $k\geq 2$.
		Assume that $F$ is a bridge-face and let $A_{a_1a_2}$ and $A_{b_1 b_2}$ be two connected components of $C_F \cap C_O$ such that $A_{a_2 b_1}$ is a connected component in $C_O\setminus C_F$.
		Then $|V(A_{a_2 b_1}) \cap \mathcal{R}| \in \{2,2k-2\}$.
	\end{lemma}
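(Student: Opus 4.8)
The plan is to isolate, by an edge-cut, the part of $G$ that ``hangs from'' the gap $A_{a_2b_1}$, read off its number of negative edges from Observation~\ref{obs:cutG'}, and compare with its number of positive edges using the inequality $d^-(X)\le d^+(X)$ that holds for every edge-cut of a critically $k$-frustrated signed graph taken with a minimum signature (a consequence of Theorem~\ref{thm:characterizations}). Two preliminary observations. At $a_2$ --- an endpoint of the component $A_{a_1a_2}$ of $C_F\cap C_O$ bordering the gap --- there meet the edge of $C_O$ towards $a_1$ (which lies on $C_F$), the edge of $C_O$ towards $b_1$, i.e.\ the first edge of the gap (which does not lie on $C_F$), and a third edge of $C_F$ leaving $C_O$; hence $d_{G'}(a_2)=3$, so $a_2\notin\mathcal R$, and likewise $b_1\notin\mathcal R$. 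Thus $j:=|V(A_{a_2b_1})\cap\mathcal R|$ equals the number of elements of $\mathcal R$ in the interior of the gap. Secondly, let $Q$ be the $a_2$--$b_1$ subpath of $C_F$ such that the cycle $Q\cup A_{a_2b_1}$ bounds the closed disc region $\overline D$ in the plane embedding of $G'$ not containing $F$; a vertex interior to a component of $C_F\cap C_O$ has no room for a third edge (it would have to enter $F$ or the outer face, hence it lies in $\mathcal R$), a vertex interior to $Q$ has its third edge going into $D$, and an interior vertex of the gap has its non-$C_O$ edge either a crosscap edge or an edge into $D$; consequently every positive (non-crosscap) edge of $G$ with exactly one end in $\overline D$ is incident with $a_2$ or $b_1$.

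Set $Y:=V(G)\cap(\overline D\setminus\{a_2,b_1\})$. By the above, the negative edges of $\partial_G(Y)$ are exactly the crosscap edges with precisely one end in the interior of the gap, and the positive edges of $\partial_G(Y)$ are exactly the edges joining $Y$ to $a_2$ or to $b_1$; in particular $\partial_G(Y)$ contains exactly two edges of $C_O$, namely the ones incident with the gap at $a_2$ and at $b_1$. Deleting these two edges from $C_O$ splits it into the interior of the gap (carrying $j$ elements of $\mathcal R$) and its complement (carrying $2k-j$), so Observation~\ref{obs:cutG'} gives $d^-_{(G,\sigma)}(Y)=\min\{j,2k-j\}$, whence $\min\{j,2k-j\}\le d^+(Y)$. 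The crux is to show $d^+(Y)=2$, i.e.\ that the only positive edges of $\partial_G(Y)$ are the two edges of $C_O$ above; this amounts to showing that the jump $Q$ is a single edge (an interior vertex of $Q$ would push an extra edge into $D$, a configuration one has to exclude for a bridge-face), so that each of $a_2,b_1$ sends exactly one edge into $Y$. Granting this, $\min\{j,2k-j\}\le 2$, i.e.\ $j\in\{0,1,2\}\cup\{2k-2,2k-1,2k\}$.

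It remains to exclude $j\in\{0,1\}$ (the cases $j\in\{2k-1,2k\}$ being symmetric, obtained by passing to the complementary arc). If $j\le 1$ then $\overline D$ carries at most one crosscap endpoint and every $a_2$--$b_1$ route inside $\overline D$ uses only positive edges; one then checks that ``contracting'' $\overline D$ (keeping $a_2,b_1$, the edge $a_2b_1$, and the at most one crosscap edge leaving it) does not change the frustration index, so that some edge of $G$ interior to $\overline D$ lies on no negative cycle and hence belongs to no minimum negative-cycle cover, contradicting Theorem~\ref{thm:characterizations}\,(2) (via Lemma~\ref{lem:NegativeCycleCover}); alternatively, $j\le 1$ lets one exhibit two edge-disjoint negative cycles, impossible for $(G,\sigma)\in\mathcal S^*(k)$ by Proposition~\ref{charact_S*}. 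Therefore $\min\{j,2k-j\}=2$, that is $|V(A_{a_2b_1})\cap\mathcal R|\in\{2,2k-2\}$. I expect the genuine work to lie in the two flagged points --- proving that jumps of bridge-faces are single edges (equivalently, $d^+(Y)=2$) and settling the degenerate case $j\le 1$ --- while everything else is bookkeeping with Observation~\ref{obs:cutG'} and the criticality inequality.
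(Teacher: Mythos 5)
Your overall plan coincides with the paper's: bound $\min\{j,2k-j\}$ by the number of positive edges in a suitable cut via Observation~\ref{obs:cutG'}, then exclude $j\le 1$ separately. But both of the points you flag as "the genuine work" are left unproven, and the first one is a self-inflicted complication. By choosing $Y=V(G)\cap(\overline{D}\setminus\{a_2,b_1\})$ you exclude $a_2$ and $b_1$ from the cut side, so any interior vertex of the jump $Q$ contributes extra positive edges to $\partial(Y)$, and you are forced to assert that $Q$ is a single edge — a claim you do not establish and which the lemma does not need. The paper avoids this entirely: with $e_1,e_2$ the edges of $A_{a_1a_2}$ and $A_{b_1b_2}$ incident with $a_2$ and $b_1$, it takes $G''$ to be the connected component of $G'-\{e_1,e_2\}$ containing $a_2$ and $b_1$ (so $a_2$, $b_1$ and all of $Q$, whatever its length, lie inside). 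Then $\partial_G(V(G''))$ has exactly the two positive edges $e_1,e_2$, and Observation~\ref{obs:cutG'} plus minimality of $\sigma$ at once rules out $3\le j\le 2k-3$.

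The second gap is the serious one: your exclusion of $j\le 1$ is only a sketch, and neither proposed route goes through as stated. "Contracting $\overline{D}$ preserves the frustration index, hence some interior edge lies on no negative cycle" is a non sequitur — an edge can lie on many negative cycles and still be avoidable by every minimum negative-cycle cover — and it is not explained how $j\le 1$ would produce two edge-disjoint negative cycles. The paper's argument here is of a genuinely different nature and is the key idea you are missing: it invokes criticality. Take an edge $e$ on the $a_2$--$b_1$ path of $C_F-\{e_1,e_2\}$; by Theorem~\ref{thm:characterizations} it lies in an equilibrated cut $\partial(X)$, which must contain two further positive edges from $C_O$, so $d^+(X)=d^-(X)\ge 3$, and it must also contain an edge of $A_{a_2b_1}$, hence at least two edges of $G''$. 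Replacing $\partial(X)\cap E(G'')$ by the single edge $e_1$ or $e_2$ then yields — precisely because $|V(A_{a_2b_1})\cap\mathcal{R}|\le 1$, so one of the two replacements keeps every negative edge of $\partial(X)$ — an edge-cut with the same negative edges but strictly fewer positive edges, contradicting the minimality of $\sigma$. Without this surgery on an equilibrated cut through the jump (or a worked-out substitute), the case $j\le 1$ remains open in your write-up.
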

	
	\begin{proof}
		Let $e_1$ (resp. $e_2$) be the edge in $A_{a_1a_2}$ (resp. $A_{b_1 b_2}$) that has $a_2$ (resp. $b_1$) as an endpoint. Let $G''$ be the connected component of $G'-\{e_1,e_2\}$ containing $a_2$ (and $b_1$). 
		
		We first show that $|V(A_{a_2 b_1}) \cap \mathcal{R}| \not\in \{3, 4, \ldots, 2k-3\}$. Otherwise, by Observation~\ref{obs:cutG'} the edge-cut $\partial_{G}(V(G''))$ must contain at least three negative edges, but it has only two positive edges. This contradicts the fact that $\sigma$ is a minimum signature.
		
		Next we show that $|V(A_{a_2 b_1}) \cap \mathcal{R}| \leq 1$ is not possible either. That $|V(A_{a_2 b_1}) \cap \mathcal{R}| \geq 2k-1$ is not possible follows similarly. Suppose to the contrary that $|V(A_{a_2 b_1}) \cap \mathcal{R}| \leq 1$. In $C_F-\{e_1,e_2\}$, there is a path connecting $a_2$ to $b_1$ and let $e$ be an edge of this path. By criticality, there exists an equilibrated edge-cut $\partial (X)$ containing $e$.
		Since each equilibrated cut of $(G, \sigma)$ contains at least two (positive) edges from $C_O$ and noting that $e$ is also a positive edge, we have $d^+(X)\geq 3$, and hence $d^-(X)\geq 3$. Moreover, by the choice of $e$, at least one of the edges of $A_{a_2b_1}$, say $e'$, is in $\partial (X)$. Thus in total, at least two edges of $G''$ are in $\partial (X)$. We now consider the following two edge-cuts: $E_1=\partial (X)\setminus E(G'') \cup \{e_1\}$ and  $E_2=\partial (X)\setminus E(G'') \cup \{e_2\}$. Since $|V(A_{a_2 b_1}) \cap \mathcal{R}| \leq 1$, it follows that one of these two edge-cuts say $E_1$, has the same set of negative edges as $\partial (X)$. However, $E_1$ has fewer positive edges than $\partial (X)$, contradicting the minimality of $\sigma$.
  \end{proof}

	\begin{proposition}\label{prop:i-face}
		Let $(G, \sigma)$ be a canonically projective-planar embedded signed graph in $\mathcal{S}^*(k)$ for $k\geq 3$. Then we have the following:
		\begin{enumerate}[label=(\roman*)]
			\item\label{F1} Every bridge-face of $G'$ is a $0$-face.
			\item\label{F2} For $i\geq 3$ there is no $i$-face in $G'$. 
		\end{enumerate}
	\end{proposition}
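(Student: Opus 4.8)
I would first record two consequences of $\sigma$ being a minimum signature. \emph{(A) Not all of $V(C_O)$ lies in $\mathcal R$.} Otherwise $G$ is the Möbius ladder $M_{2k}$ on $2k$ vertices ($C_O$ together with the ``antipodal'' negative matching), and switching at the set $\{x_1,\dots,x_k\}$ of $k$ consecutive vertices of $C_O$ turns $\sigma$ into a signature whose only negative edges are the two edges of $C_O$ incident to $x_1$ and to $x_k$; hence $\ell(G,\sigma)\le 2<k$, a contradiction. \emph{(B) Every maximal block $X$ of consecutive vertices of $C_O$ lying entirely in $\mathcal R$ has $|X|\le 2$.} The cross cap induces a fixed-point-free rotation $\tau$ of $C_O$ pairing each $x_i$ with $y_i$, and $\tau$ sends maximal $\mathcal R$-blocks to maximal $\mathcal R$-blocks. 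If $|X|\ge 3$, then $\tau(X)$ is such a block of the same size; were it to meet $X$ it would, by maximality and (A), equal $X$, impossible since a nontrivial rotation fixes no proper interval. So $\tau(X)\cap X=\emptyset$, whence $\partial_G(X)$ consists of two positive edges of $C_O$ together with the $\ge 3$ negative edges at $X$, giving $d^-(X)>d^+(X)$ and contradicting minimality. Finally recall the local fact used for internal faces in the text: if $w$ is an interior vertex of a maximal arc $P$ of $C_F\cap C_O$ of an inner face $F$, then both edges of $G'$ at $w$ lie on $C_F$, so a third edge at $w$ would be drawn inside $F$; hence $w\in\mathcal R$. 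Thus the interior of each maximal arc of $C_F\cap C_O$ is a maximal $\mathcal R$-block, so by (B) it carries at most two elements of $\mathcal R$.

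\emph{Proof of (ii), granting (i).} If $F$ is a bridge-face, it is a $0$-face by (i), so $i=0$. Otherwise $C_F\cap C_O$ is a single arc $P$ whose end vertices have degree $3$ and hence lie outside $\mathcal R$, so every element of $\mathcal R$ on $C_F$ lies in the interior of $P$; by the paragraph above there are at most two. Either way $i\le 2<3$.

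\emph{Proof of (i).} Let $F$ be a bridge-face and let $R_1,\dots,R_m$ ($m\ge 2$) be the connected components of $C_O\setminus C_F$ (the gaps). By Lemma~\ref{lem:2or2k-2}, $|V(R_\ell)\cap\mathcal R|\in\{2,2k-2\}$ for each $\ell$; since the gaps are disjoint and $2(2k-2)>2k$ when $k\ge 3$, at most one gap has $2k-2$ elements. If one does, the remaining $m-1\ge 1$ gaps hold at most $2$ elements in total, hence there is exactly one more gap, it has exactly $2$, so $m=2$, all $2k$ elements of $\mathcal R$ lie in the two gaps, and $F$ is a $0$-face. If instead every gap has exactly $2$ elements, the interiors of the $P_\ell$ together hold $2k-2m$ elements of $\mathcal R$, and for $m=k$ this is $0$, so again $F$ is a $0$-face.

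The remaining case — every gap has $2$ elements and $2\le m\le k-1$, so $F$ carries $2k-2m\ge 2$ elements of $\mathcal R$, sitting in complete maximal blocks (each of size $1$ or $2$) in the interiors of the $P_\ell$ — is the crux. The natural approach is to apply $\tau$ once more: each on-$F$ block maps to a maximal $\mathcal R$-block that must lie inside a single arc $P_{\ell'}$ or inside a single gap $R_{\ell'}$, yet two elements of $\mathcal R$ lying in different gaps, or one on $F$ and one in a gap, are never consecutive on $C_O$, because consecutive gaps and arcs are separated by nonempty arcs whose ends are not in $\mathcal R$. Using that $\tau$ acts on the cyclic order of $\mathcal R$ as the shift by $k$, that each gap's two elements are a prescribed consecutive pair, and tracking the forced images of the on-$F$ blocks, one rules out all but small degenerate configurations (for instance $k=3$, $m=2$ with the two on-$F$ elements of $\mathcal R$ split between two arcs), which I would eliminate by exhibiting a positive edge of $C_F$ that lies in no equilibrated edge-cut, contradicting criticality via Theorem~\ref{thm:characterizations}. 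This residual family of bridge-faces is exactly the step I expect to be hardest: Lemma~\ref{lem:2or2k-2} and counting dispatch everything else in one line apiece, whereas these faces seem to require the combined use of the involution $\tau$, fact (B), and criticality.
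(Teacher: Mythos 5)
Your proposal is not complete: the decisive case of part \ref{F1} is only sketched, and the auxiliary fact (B) on which your part \ref{F2} rests is not correctly proved.

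For \ref{F1}, your counting via Lemma~\ref{lem:2or2k-2} correctly reduces to the case where every gap of the bridge-face contains exactly two elements of $\mathcal{R}$ and $C_F$ itself still carries some elements of $\mathcal{R}$. This is precisely the case you label ``the crux'' and for which you offer only a plan (tracking images of blocks under $\tau$, then eliminating ``small degenerate configurations'' by criticality) in the conditional mood; no argument is actually given, so the proposition is not proved. The paper disposes of this case with a single cut, needing only minimality of $\sigma$ and not criticality: after relabelling, let $x_1$ be the element of $\mathcal{R}$ on an arc of $C_F\cap C_O$ closest to an adjacent gap, and let $x_2,x_3$ be the two elements of $\mathcal{R}$ in that gap; taking $e_1$ to be the edge of that arc just before $x_1$ and $e_2$ the first edge of the arc on the far side of the gap, the set $\{e_1,e_2,x_1y_1,x_2y_2,x_3y_3\}$ is an edge-cut with two positive and three negative edges (the negative edges are distinct and leave the cut side since three consecutive elements of $\mathcal{R}$ contain no antipodal pair when $k\ge 3$), contradicting the minimality of $\sigma$.

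There is also a flaw in your fact (B). The pairing $x_i\leftrightarrow y_i$ is an involution of $\mathcal{R}$, not a rotation of $C_O$, and it does not send maximal $\mathcal{R}$-blocks to maximal $\mathcal{R}$-blocks (for instance $x_1,x_2$ may be adjacent on $C_O$ while $y_1$ and $y_2$ lie in different blocks), so the step ``were it to meet $X$ it would equal $X$'' has no basis. What is true is that $\tau(X)\cap X=\emptyset$ exactly when $|X|\le k$, because antipodal pairs are at distance $k$ in the cyclic order of $\mathcal{R}$; consequently your cut argument gives nothing for blocks of size $2k-2$ or $2k-1$, where $d^-(X)=2k-|X|\le 2=d^+(X)$. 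Fact (B) is nevertheless true and the repair is the same cut as above: any three consecutive vertices of such a block, together with the two positive edges of $C_O$ leaving them, form a cut with three negative and two positive edges. With that repair, your derivation of \ref{F2} from \ref{F1} and (B) is sound and is in substance the paper's own argument, which applies this three-element cut directly to any face whose boundary meets $\mathcal{R}$ in at least three vertices.
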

	
	\begin{proof}
		\ref{F1} Let $F$ be a bridge-face of $G'$ and assume that $C_F \cap C_O$ consists of $t$ connected components (thus $t\geq 2$). Let $A_{a_1a_2}$ and $A_{b_1 b_2}$ be two connected components of $C_F \cap C_O$ such that $A_{a_2 b_1}$ is a connected component in $C_O\setminus C_F$. By Lemma~\ref{lem:2or2k-2}, $|V(A_{a_2 b_1}) \cap \mathcal{R}| \in \{2,2k-2\}$. Toward a contradiction and without loss of generality, assume that $x_1 \in \mathcal{R}\cap  V(A_{a_1a_2})$ and $x_2,x_3\in \mathcal{R}\cap  V(A_{a_2b_1})$, depicted in Figure~\ref{fig:Claim_i-face}.

        We claim that each connected component of $C_O\setminus C_F$ contains exactly two vertices from $\mathcal{R}$. If not, then one of them contains $2k-2$ vertices from $\mathcal{R}$. In this case, since $|\mathcal{R}|=2k$, there is only one other component in $C_O\setminus C_F$. Furthermore, this component must contain the other two vertices of $\mathcal{R}$. This in turn implies that $F$ is a $0$-face.  
		
		Let $e_1$ be the edge on $A_{a_1x_1}$ incident with $x_1$ and let $e_2$ be the edge on $A_{b_1b_2}$ incident with $b_1$. Then the set $\{e_1, e_2, x_1y_1, x_2y_2, x_3y_3\}$ is an edge-cut consisting of two positive edges and three negative edges, contradicting the fact that $\sigma$ is a minimum signature. 
		
		\smallskip
		\noindent
		\ref{F2} 
		Suppose to the contrary that $F$ is an $i$-face of $G'$ for $i \geq 3$. 
		By Claim~\ref{F1}, we know that $F$ is not a bridge-face. Therefore, by the symmetry of labeling, we assume that $x_1, x_2, x_3\in V(C_F)\cap \mathcal{R}$.
		Let $e_1=vx_1, e_2=x_3u \in E(C_F\cap C_O)$ such that $v\not\in V(A_{x_1x_2})$ and $u\not\in V(A_{x_2x_3})$. Then the edge set $ \{e_1, e_2, x_1y_1, x_2y_2, x_3y_3\}$ is an edge-cut that contains three negative edges but only two positive edges, a contradiction. 
	\end{proof}
	
	From now on, we focus on the family $\mathcal{S}^*(3)$. We give some structural properties of signed graphs in $\mathcal{S}^*(3)$ in the following lemmas.

	\begin{lemma}\label{lem:1-2face}
		Let $(G, \sigma)$ be a canonically projective-planar embedded signed graph in $\mathcal{S}^*(3)$. Then each face of $G'$ is either a bridge-face or an $i$-face for $i \in \{1,2\}$.
	\end{lemma}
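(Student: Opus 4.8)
The plan is to exclude the only two kinds of faces left out by the statement: internal faces, and $0$-faces that are not bridge-faces. (That $G'$ has no $i$-face with $i\geq 3$ is exactly Proposition~\ref{prop:i-face}\ref{F2}, which applies since $k=3$.) So let $F$ be a face of one of these two types; in both cases $C_F$ contains no vertex of $\mathcal{R}$ — for an internal face because $C_F\cap C_O=\emptyset$, and for a $0$-face by definition — hence every vertex of $C_F$ has degree $3$ and every edge of $C_F$ is positive. The engine used throughout is the following: if $e$ is a positive edge with $e\notin E(C_O)$, then by criticality (Theorem~\ref{thm:characterizations}) there is an equilibrated edge-cut $\partial(X)$ with $e\in\partial(X)$; since $\partial(X)$ is equilibrated it contains a negative edge $x_jy_j$, and because $x_j,y_j$ lie on the cycle $C_O$ on opposite sides of the cut, $\partial(X)$ contains at least two edges of $C_O$, which are positive and different from $e$; thus $d^+(X)=d^-(X)\geq 3$, and since $|E^-_\sigma|=3$ we get $E^-_\sigma\subseteq\partial(X)$ and $\partial(X)=\{e_1,e_2,e\}\cup E^-_\sigma$ with $e_1,e_2\in E(C_O)$.

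Suppose first $F$ is internal. Choose any $e\in E(C_F)$ and build $\partial(X)$ as above. Since $C_F\cap C_O=\emptyset$ and $C_F$ carries no negative edge, $\partial(X)\cap E(C_F)=\{e\}$, contradicting the fact that every edge-cut meets every cycle in an even number of edges. Hence $G'$ has no internal face.

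Now suppose $F$ is a $0$-face that is not a bridge-face. Then $C_F\cap C_O$ is a non-empty connected proper sub-path $P$ of $C_O$, and I claim $P$ is a single edge $ab$: an internal vertex $v$ of $P$ would have both its $C_O$-edges on $P\subseteq C_F$, hence equal to its two $C_F$-edges (as $C_F$ is a cycle through $v$), so the outer face of $G'$ and $F$ would occupy the same corner at $v$ and therefore coincide — impossible. Take $e$ to be the edge of $C_F$ incident with $a$ other than $ab$; then $e\notin E(C_O)$, since otherwise $e\in C_F\cap C_O=\{ab\}$. Building $\partial(X)=\{e_1,e_2,e\}\cup E^-_\sigma$ as above, parity of $|\partial(X)\cap E(C_F)|$ together with $C_F\cap C_O=\{ab\}$ forces $ab\in\{e_1,e_2\}$. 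Thus $\partial(X)$ contains the two edges $ab$ and $e$, both incident with $a$, while the third edge at $a$ lies on $C_O$. Moving $a$ to the other side of the cut deletes $ab$ and $e$ and adds at most one (positive) edge, giving an edge-cut that still contains all of $E^-_\sigma$ but at most two positive edges — so $d^->d^+$ there, contradicting that $\sigma$ is a minimum signature (note $X$ contains an endpoint of each negative edge, so removing $a$ does not empty it). Hence no such $F$ exists, and the lemma follows.

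I expect the one genuinely delicate point to be the planar claim that $C_F\cap C_O$ is a single edge — it is the only place where the projective-planar embedding itself, rather than pure cut-and-parity bookkeeping, is used. Everything after that (including the easy degenerate cases, e.g.\ $C_F$ a digon) is routine once the cut $\partial(X)=\{e_1,e_2,e\}\cup E^-_\sigma$ is at hand.
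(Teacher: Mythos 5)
Your proof is correct, and for the main case it takes a genuinely different route from the paper. The internal-face case is essentially the paper's argument in another guise: the paper counts $d^+(X)\geq 4>3\geq d^-(X)$, while you use the parity of $|\partial(X)\cap E(C_F)|$ once the cut has been pinned down to $\{e,e_1,e_2\}\cup E^-_\sigma$; these carry the same information. For a $0$-face $F$ that is not a bridge-face, the paper passes to a face $F'$ adjacent to $F$ on the boundary, takes the equilibrated cut through an edge of $C_{F}\cap C_{F'}$, and excludes $|X\cap\mathcal{R}|=3$ by a case analysis on the type of $F'$, which leans on Proposition~\ref{prop:i-face}~\ref{F1} and Lemma~\ref{lem:2or2k-2}. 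You instead stay local: your observation that $C_F\cap C_O$ must be a single edge $ab$ (an internal vertex of that path would force $F$ and the outer face to occupy the same corner, using that every vertex of $C_F$ avoids $\mathcal{R}$ and hence has degree $3$ in $G'$) does not appear in the paper, and it lets you conclude that the equilibrated cut through $e$ contains two positive edges meeting at the degree-$3$ vertex $a$; shifting $a$ across the cut then yields a cut with three negative and at most two positive edges, contradicting minimality of $\sigma$. This is self-contained (no appeal to Lemma~\ref{lem:2or2k-2} or to part~\ref{F1} of Proposition~\ref{prop:i-face} in this step) at the price of the small corner/embedding lemma, which you rightly flag as the delicate point but which does hold here. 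The bookkeeping you compress is fine: $X\setminus\{a\}$ is a proper nonempty subset because $X$ contains one endpoint of each of the three negative edges and $a\notin\mathcal{R}$, and the third edge at $a$ is positive for the same reason.
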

	
	\begin{proof}
		By Proposition~\ref{prop:i-face}~\ref{F2}, if $F$ is an $i$-face of $G'$, then $i\in \{0,1,2\}$. It remains to show that there are no internal faces and that every $0$-face is a bridge-face.
		
		For the first claim, assume to the contrary that there exists an internal face $F$ of $G'$. Note that each equilibrated cut containing one edge of $C_F$ must have at least two (positive) edges from $C_F$ and two (positive) edges from $C_O$. However, there are only three negative edges in $(G, \sigma)$, contradicting the fact that each equilibrated cut has the same number of positive and negative edges.
		
		For the second claim, assume that $F$ is a $0$-face of $G'$ which is not a bridge-face. As $C_F$ shares at least one edge with the outer facial cycle $C_O$ of $G'$, there is a face $F'$ such that $C_{F'}$ shares a common vertex with both $C_F$ and $C_O$. Assume that $F'$ is an $i$-face for $i\in \{0,1,2\}$. Let $e_0$ be a (positive) edge in the path $C_F \cap C_{F'}$. Let $\partial(X)$ be the equilibrated cut containing $e_0$. Recall that any equilibrated cut must contain at least two edges of $C_O$. As $(G, \sigma) \in \mathcal{S}^*(3)$, $\partial(X)$ contains exactly two edges of $C_O$. Furthermore, one of these two edges belongs to $E(C_O\cap C_F)$ while the other is in $E(C_O\cap C_{F'})$. To complete the proof, it suffices to show that $|X\cap \mathcal{R}|\neq 3$, which would contradict the fact that $\partial(X)$ is an equilibrated cut. If $F'$ is not a bridge-face, then by Proposition~\ref{prop:i-face} there are at most two elements of $\mathcal{R}$ in $C_{F'}$, and consequently at most two elements of $\mathcal{R}$ in $X$ (i.e., $|X\cap \mathcal{R}|\leq 2$). If $F'$ is a bridge-face, then by Lemma~\ref{lem:2or2k-2} the number of elements of $\mathcal{R}$ in each connected component of $C_O\setminus C_{F'}$ is either $2$ or $4$. As either all of the vertices of a connected component of $C_O\setminus C_{F'}$ are contained in $X$ or none of them is in $X$, $|X \cap \mathcal{R}|$ has to be an even number and clearly $|X\cap \mathcal{R}|\neq 3$.
	\end{proof}

	\begin{figure}[htbp]
		\centering
		\begin{minipage}[t]{.4\textwidth}
			\centering
			\begin{tikzpicture}[scale=.38]\
                \draw[fill=none, line width=0.4mm, blue](0,0) circle (5)  {};
				\draw[rotate=360-36*(0)] (0,5) node[circle, draw=black!80, inner sep=0mm, minimum size=3.5mm, fill=white] (x_{0}){\scriptsize $a_2$};
				\foreach \i in {1,2}
				{
					\draw[rotate=360-36*(\i)] (0,5) node[circle, draw=black!80, inner sep=0mm, minimum size=3.5mm, fill=white] (x_{\i}){\scriptsize $x_{\i}$};
				}
				\draw[rotate=360-36*(3)] (0,5) node[circle, draw=black!80, inner sep=0mm, minimum size=3.5mm, fill=white] (x_{3}){\scriptsize $b_1$};
				\draw[rotate=360-36*(4)] (0,5) node[circle, draw=black!80, inner sep=0mm, minimum size=3.5mm, fill=white] (x_{4}){\scriptsize $b_2$};
				\draw[rotate=360-36*(5)] (0,5) node[circle, draw=black!80, inner sep=0mm, minimum size=3.5mm, fill=white] (x_{5}){\scriptsize $x_3$};
				\draw[rotate=360-36*(6)] (0,5) node[circle, draw=black!80, inner sep=0mm, minimum size=3.5mm, fill=white] (x_{6}){\scriptsize $y_1$};
				\draw[rotate=360-36*(7)] (0,5) node[circle, draw=black!80, inner sep=0mm, minimum size=3.5mm, fill=white] (x_{7}){\scriptsize $y_2$}; 
				\draw[rotate=360-36*(8)] (0,5) node[circle, draw=black!80, inner sep=0mm, minimum size=3.5mm, fill=white] (x_{8}){\scriptsize $y_3$};
				\draw[rotate=360-36*(9)] (0,5) node[circle, draw=black!80, inner sep=0mm, minimum size=3.5mm, fill=white] (x_{9}){\scriptsize $a_1$};
				
				\draw[ line width=0.4mm, blue] (x_{9}) edge[bend right=30] (x_{4});	
				
				\draw[ line width=0.4mm, blue] (x_{0}) edge[bend right=30] node [midway, draw, fill=black, rectangle, label=right:$e_0$]  {} (x_{3});	
				
				\foreach \i in {2,7}
				{						
					\draw[rotate=360-36*(\i)+6] (0,7.8) node[circle, draw=black!80, inner sep=0mm, minimum size=0mm] (z_{\i}){};
				}
				
				\foreach \i in {3,8}
				{						
					\draw[rotate=360-36*(\i)-6] (0,7.8) node[circle, draw=black!80, inner sep=0mm, minimum size=0mm] (z_{\i}){};
				}
				
				\foreach \i in {0,5}
				{
					\draw[rotate=360-36*(\i)] (0,7.8) node[circle, draw=black!80, inner sep=0mm, minimum size=0mm] (z_{\i}){};
				}
				\node[text=blue] at (-.7,.8) {$F$};
			\end{tikzpicture}
			\caption{Case in Lemma~\ref{lemma:bridge-face}}
			\label{fig:Bridge3Comp}
		\end{minipage}
		\begin{minipage}[t]{.4\textwidth}
			\centering
			\begin{tikzpicture}[scale=.38]
                \draw[fill=none, line width=0.4mm, blue](0,0) circle (5)  {};
				\draw[rotate=360-40*(0)] (0,5) node[circle, draw=black!80, inner sep=0mm, minimum size=3.5mm, fill=white] (x_{0}){\scriptsize $x_1$};
				\draw[rotate=360-40*(1)] (0,5) node[circle, draw=black!80, inner sep=0mm, minimum size=3.5mm, fill=white] (x_{1}){};
				\draw[rotate=360-40*(2)] (0,5) node[circle, draw=black!80, inner sep=0mm, minimum size=3.5mm, fill=white] (x_{2}){\scriptsize $x_2$};      
				\draw[rotate=360-40*(3)] (0,5) node[circle, draw=black!80, inner sep=0mm, minimum size=3.5mm, fill=white] (x_{3}){};
				\draw[rotate=360-40*(4)] (0,5) node[circle, draw=black!80, inner sep=0mm, minimum size=3.5mm, fill=white] (x_{4}){\scriptsize $x_3$};
				\draw[rotate=360-40*(5)] (0,5) node[circle, draw=black!80, inner sep=0mm, minimum size=3.5mm, fill=white] (x_{5}){\scriptsize $y_1$};
				\draw[rotate=360-40*(6)] (0,5) node[circle, draw=black!80, inner sep=0mm, minimum size=3.5mm, fill=white] (x_{6}){\scriptsize $y_2$};
				\draw[rotate=360-40*(7)] (0,5) node[circle, draw=black!80, inner sep=0mm, minimum size=3.5mm, fill=white] (x_{7}){\scriptsize $y_3$}; 
				\draw[rotate=360-40*(8)] (0,5) node[circle, draw=black!80, inner sep=0mm, minimum size=3.5mm, fill=white] (x_{8}){};
				
				
				\draw[ line width=0.4mm, blue] (x_{8}) edge[bend right=60] node [near end, draw, fill=black, rectangle, label=right:$e_0$]  {} (x_{1});
				
				\draw[ line width=0.4mm, blue] (-0.3,1.9) edge[bend right=60] node [near end, above=10pt]  {$F_2$} (x_{3});
				\node[text=blue] at (0,3.2) {$F_1$};	
				\foreach \i in {1,4,7}
				{
					\draw[rotate=360-40*(\i)+12] (0,7.8) node[circle, draw=black!80, inner sep=0mm, minimum size=0mm] (z_{\i}){};
				}
				
				\foreach \i in {2,5,8}
				{
					\draw[rotate=360-40*(\i)-12] (0,7.8) node[circle, draw=black!80, inner sep=0mm, minimum size=0mm] (z_{\i}){};
				}
				
				\draw[rotate=360-40*(0)] (0,-7.1) node[circle, draw=white, inner sep=0mm, minimum size=3.9mm] (){};
				
			\end{tikzpicture}
			\caption{Case in Lemma~\ref{lem:1-face2-face}}
			\label{fig:i+j_2}
		\end{minipage}
	\end{figure}

	\begin{lemma}\label{lemma:bridge-face}
		Let $(G, \sigma)$ be a canonically projective-planar embedded signed graph in $\mathcal{S}^*(3)$. If $F$ is a bridge-face of $G'$, then $C_F \cap C_O$ has exactly three connected components. In particular, there is at most one bridge-face.
	\end{lemma}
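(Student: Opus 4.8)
The plan is to first pin down the number $t$ of connected components of $C_F\cap C_O$ using only $\mathcal{R}$-counting, then exclude the case $t=2$ by producing a forbidden equilibrated cut, and finally deduce ``at most one bridge-face'' from a short planar-topology argument.

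First I would collect what is free from the earlier results. Since $F$ is a bridge-face, Proposition~\ref{prop:i-face}~\ref{F1} tells us $F$ is a $0$-face, so no vertex of $\mathcal{R}$ lies on $C_F$; hence all six vertices of $\mathcal{R}$ lie in the $t$ connected components (the ``gaps'') of $C_O\setminus C_F$. By Lemma~\ref{lem:2or2k-2} each gap carries $2$ or $4$ vertices of $\mathcal{R}$, and these numbers sum to $6$, so $t\in\{2,3\}$: for $t=2$ the gaps carry $\{2,4\}$, for $t=3$ they carry $\{2,2,2\}$, and $t\ge 4$ is impossible because $4\cdot 2=8>6$. (This already records that when $t=3$ every gap carries exactly two vertices of $\mathcal{R}$, which is what I would want for the last part.)

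The crux is excluding $t=2$. Write $C_F=\alpha\cup P\cup\beta\cup Q$, where $\alpha,\beta$ are the two arcs of $C_F\cap C_O$ and $P,Q$ are the two subpaths of $C_F$ internally disjoint from $C_O$; label so that the endpoints of $P$ are those of one gap $G_1$ and the endpoints of $Q$ those of the other gap $G_2$, so that $P\cup G_1$ and $Q\cup G_2$ are cycles. Because $C_F\cap C_O$ is disconnected, $\alpha$ and $\beta$ are vertex-disjoint, so neither $P$ nor $Q$ is a single vertex; in particular $P$ has an edge $e_0$, and I may assume $|V(G_2)\cap\mathcal{R}|=4$. Since $e_0$ is positive, criticality (Theorem~\ref{thm:characterizations}) gives an equilibrated cut $\partial(X)$ with $e_0\in\partial(X)$; as $e_0\notin E(C_O)$ and any equilibrated cut meeting $E^-_\sigma$ contains at least two (positive) edges of $C_O$, we get $d^+(X)\ge 3$, forcing $d^+(X)=d^-(X)=3$, $E^-_\sigma\subseteq\partial(X)$, and $\partial(X)=\{e_0,f_1,f_2\}\cup E^-_\sigma$ with $f_1,f_2\in E(C_O)$. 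The key point is that an edge-cut meets a cycle in an even number of edges: applied to $P\cup G_1$ (whose edges meet $\partial(X)$ only inside $\{e_0,f_1,f_2\}$, and which contains $e_0$) this forces exactly one of $f_1,f_2$, say $f_1$, into $E(G_1)$; applied to $Q\cup G_2$, whose edge set contains neither $e_0$ (in $E(P)$) nor $f_1$ (in $E(G_1)$), it forces $\partial(X)\cap E(Q\cup G_2)=\emptyset$, so $f_2\notin E(G_2)$. Thus no edge of $G_2$ is deleted in $C_O-\{f_1,f_2\}$, so all of $G_2$, hence all four of its $\mathcal{R}$-vertices, lies in one of the two arcs $A_1,A_2$ of $C_O-\{f_1,f_2\}$; the other arc then carries at most two vertices of $\mathcal{R}$, and Observation~\ref{obs:cutG'} gives $d^-(X)\le 2$, contradicting $d^-(X)=3$. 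Hence $t=3$.

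For ``at most one bridge-face'', suppose $F$ and $F'$ are distinct bridge-faces. The three internal subpaths $P_1,P_2,P_3$ of $C_F$ are non-crossing chords of the closed disc bounded by $C_O$, the endpoints of $P_j$ being those of the gap $G_j$, and together they split this disc into four regions: the face $F$ bounded by $C_F$, and for each $j$ a region $R_j$ bounded by $P_j$ together with $G_j$. Since $F'$ is a face of $G'$ distinct from $F$ and contains no edge of $P_1\cup P_2\cup P_3$ in its interior, $F'$ lies in the closure of exactly one $R_j$; but $\overline{R_j}\cap C_O=G_j$ is a single arc, so $C_{F'}\cap C_O$ is connected, contradicting that $F'$ is a bridge-face. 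I expect the main difficulty to be the $t=2$ exclusion, and within it the precise localization of the two $C_O$-edges $f_1,f_2$ of the cut via the even-intersection-with-cycles argument, since this is exactly what is needed to apply Observation~\ref{obs:cutG'}; once that localization is in hand the contradiction is immediate.
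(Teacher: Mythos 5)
Your treatment of the main body of the lemma is correct and follows essentially the same route as the paper: the $\mathcal{R}$-count forces $t\in\{2,3\}$, and $t=2$ is excluded by taking an edge $e_0$ of the internal path of $C_F$ facing the $2$-element gap, extracting an equilibrated cut $\partial(X)=\{e_0,f_1,f_2\}\cup E^-_\sigma$ with $f_1,f_2\in E(C_O)$, and deriving $d^-(X)\le 2$ from Observation~\ref{obs:cutG'}. The paper merely asserts that one of the two $C_O$-edges of this cut lies on the small gap and the other on $C_F\cap C_O$; your even-intersection-with-cycles argument is a clean justification of exactly that localization, so this part is, if anything, argued more carefully than the original.

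The last paragraph, however, contains a non sequitur. From $F'\subseteq R_j$ you correctly get $C_{F'}\cap C_O\subseteq G_j$, but a subgraph of a single arc need not be connected: two nested chords of $G_j$ drawn inside $R_j$ would bound a face whose intersection with $C_O$ consists of two disjoint subarcs of $G_j$, so such an $F'$ would still be a bridge-face, and nothing purely topological forbids it. What rules it out is counting, not topology: if $F'$ were a bridge-face then, by the part of the lemma you have already proved, $C_O\setminus C_{F'}$ would have three components each containing exactly two vertices of $\mathcal{R}$; but since $C_{F'}\cap C_O\subseteq G_j$, one component of $C_O\setminus C_{F'}$ contains all of $C_O\setminus \overline{G_j}$ and hence at least the four vertices of $\mathcal{R}$ lying on the other two gaps of $F$ --- a contradiction (alternatively, invoke Lemma~\ref{lem:2or2k-2} for $F'$ directly). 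This is the argument the paper is implicitly using when it deduces ``there is no other bridge-face'' from the fact that each gap of $F$ carries exactly two vertices of $\mathcal{R}$. With that one sentence repaired, your proof is complete.
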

	
	\begin{proof}
		As $(G, \sigma) \in \mathcal{S}^*(3)$, we have $|\mathcal{R}|=6$. As $F$ is a bridge-face, $C_F \cap C_O$ has at least two components, and, by Lemma~\ref{lem:2or2k-2}, has at most three components. It remains to show that $C_F \cap C_O$ does not have exactly two components. Assume to the contrary that $C_F \cap C_O$ has exactly two components, say $A_{a_1a_2}$ and $A_{b_1b_2}$. Then one of $A_{a_2b_1}$ or $A_{b_2a_1}$, say $A_{a_2b_1}$ without loss of generality, has two elements from $\mathcal{R}$, and the other, $A_{b_2a_1}$ in this case, has four elements from $\mathcal{R}$. See Figure~\ref{fig:Bridge3Comp} for a depiction.
		
		Let $e_0$ be an edge on the $a_2b_1$-path of $C_F$ which is internally vertex-disjoint from $C_O$. Let $\partial (X)$ be an equilibrated cut containing $e_0$. As $\partial(X)$ must contain two (positive) edges, say $e_1$ and $e_2$, of $C_O$, it has to be an edge-cut of size $6$ and hence $e_0$, $e_1$, and $e_2$ are the only positive edges of it. Thus one of $e_1$ or $e_2$ is on $A_{a_2b_1}$ and the other is on $A_{a_1a_2}\cup A_{b_1b_2}$. Noting that each bridge-face is a $0$-face by Proposition~\ref{prop:i-face}~\ref{F1} and $A_{a_2b_1}$ contains two elements from $\mathcal{R}$, $X$ has at most two vertices of $\mathcal{R}$ and, therefore, $\partial(X)$ contains at most two negative edges, contradicting the fact that it is an equilibrated cut.
		
		Finally, by Lemma~\ref{lem:2or2k-2}, as each of the connected components of $C_O\setminus C_F$ must contain either two or four elements of $\mathcal{R}$, and since there are three connected components, each of them contains exactly two elements of $\mathcal{R}$ and thus there is no other bridge-face.
	\end{proof}

	\begin{lemma}\label{lem:1-face2-face}
		Let $(G, \sigma)$ be a canonically projective-planar embedded signed graph in $\mathcal{S}^*(3)$. Let $F_1$ and $F_2$ be an $i_1$-face and an $i_2$-face of $G'$, respectively. If $F_1$ is adjacent to $F_2$ on the boundary, then either (i) $i_1+i_2 \geq 3$ or (ii) one of $F_1$ and $F_2$ is a bridge-face.
	\end{lemma}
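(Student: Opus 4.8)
## Proof proposal for Lemma~\ref{lem:1-face2-face}

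The plan is to argue by contradiction: suppose $F_1$ is an $i_1$-face, $F_2$ is an $i_2$-face, they are adjacent on the boundary, neither is a bridge-face, and $i_1+i_2\leq 2$. By Proposition~\ref{prop:i-face}\ref{F2} we already know $i_1,i_2\leq 2$, so the remaining cases are $(i_1,i_2)\in\{(0,0),(0,1),(1,1),(0,2)\}$. Since $F_1,F_2$ are adjacent on the boundary, there is a vertex $w\in V(C_{F_1}\cap C_{F_2}\cap C_O)$; because $G'$ is subcubic, the edge of $C_O$ on ``one side'' of $w$ lies in $C_{F_1}$ and the edge on the other side lies in $C_{F_2}$ (and the third edge at $w$, if $w$ is internal-ish, is the edge shared by $C_{F_1}$ and $C_{F_2}$). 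Following the template of the proof of Lemma~\ref{lem:1-face2-face}'s neighbours (in particular Lemma~\ref{lem:1-2face}), pick a positive edge $e_0$ on the common path $C_{F_1}\cap C_{F_2}$ and let $\partial(X)$ be an equilibrated cut through $e_0$, guaranteed by criticality via Theorem~\ref{thm:characterizations}.

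The key step is then the same counting argument used repeatedly above. Since $(G,\sigma)\in\mathcal{S}^*(3)$, every equilibrated cut contains exactly two edges of $C_O$, so $\partial(X)$ has exactly the three positive edges $e_0,e_1,e_2$ with $e_1\in E(C_O\cap C_{F_1})$ and $e_2\in E(C_O\cap C_{F_2})$ (they cannot both be on the boundary of the same face by the structure near $w$). Consequently $\partial(X)\cap C_O=\{e_1,e_2\}$ cuts $C_O$ into two arcs, one of which, $A$, consists of the portion of $C_{F_1}\cap C_O$ and $C_{F_2}\cap C_O$ lying ``between'' $e_1$ and $e_2$ through $w$. Because neither $F_1$ nor $F_2$ is a bridge-face, $C_{F_j}\cap C_O$ is connected for $j=1,2$, so $A$ is a single arc carrying at most $i_1+i_2\leq 2$ elements of $\mathcal{R}$. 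By Observation~\ref{obs:cutG'}, $d^-(X)=\min\{|V(A)\cap\mathcal{R}|,|V(C_O)\setminus V(A))\cap\mathcal{R}|\}\leq i_1+i_2\leq 2$, while $d^+(X)=3$; this contradicts that $\partial(X)$ is equilibrated, completing the proof.

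The main obstacle is making the geometric claim precise: that the two $C_O$-edges of $\partial(X)$ really do separate out an arc whose $\mathcal{R}$-count is at most $i_1+i_2$, rather than the complementary arc. This needs care because a priori one edge $e_1$ of $C_{F_1}\cap C_O$ could lie ``on the far side'' of $x_1$-type vertices from $w$. The fix is to choose $e_0$ adjacent to the common vertex $w$ (so that $e_0$ shares the endpoint $w$ with both an edge of $C_{F_1}\cap C_O$ and an edge of $C_{F_2}\cap C_O$) and to note that, since $G'$ is subcubic, the equilibrated cut $\partial(X)$ through $e_0$ separates $w$ from the third neighbour along $C_{F_1}\cap C_{F_2}$; then $e_1$ and $e_2$ are forced to be the two $C_O$-edges incident to the connected pieces $C_{F_1}\cap C_O$ and $C_{F_2}\cap C_O$ that touch $w$, so the ``inner'' arc $A$ is exactly $(C_{F_1}\cup C_{F_2})\cap C_O$ on the $w$-side and hence meets $\mathcal{R}$ in at most $i_1+i_2$ vertices. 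One should also double-check the degenerate possibility that $e_1=e_2$ or that $\partial(X)$ picks up further $C_O$-edges outside $C_{F_1}\cup C_{F_2}$; both are excluded since $\partial(X)$ has only two $C_O$-edges and $e_0\notin C_O$, forcing $e_1\neq e_2$ and both on $C_{F_1}\cup C_{F_2}$.
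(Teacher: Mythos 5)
Your proposal is correct and follows essentially the same route as the paper: pick an edge $e_0$ on the common interior path $C_{F_1}\cap C_{F_2}$, take the equilibrated cut $\partial(X)$ through it, locate its two remaining positive edges on $C_{F_1}\cap C_O$ and $C_{F_2}\cap C_O$, and derive the contradiction $d^-(X)\le i_1+i_2\le 2<3=d^+(X)$. The only cosmetic difference is that the paper first invokes Lemma~\ref{lem:1-2face} to rule out the cases $i_1+i_2<2$ outright, whereas you carry those (vacuous) cases along and spell out the ``as before'' steps in more detail.
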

	
	\begin{proof}
		Assume that neither of $F_1$ and $F_2$ is a bridge-face. 
		By Lemma~\ref{lem:1-2face} $i_1+i_2 \geq 2$, and it remains to prove that $i_1+i_2 \neq 2$. Assume to the contrary that $i_1+i_2 =2$. Let $e_0$ be an edge on the path $C_{F_1} \cap C_{F_2}$. See Figure~\ref{fig:i+j_2}. Each equilibrated cut containing the edge $e_0$ must have two more (positive) edges of $C_O$ say $e_1$ and $e_2$. It follows as before that $e_1$ is on $C_{F_1}\cap C_O$ and $e_2$ is on $C_{F_2}\cap C_O$. Since $i_1+i_2=2$, a similar argument implies that $X$ can contain at most two vertices from $\mathcal{R}$, leading to a contradiction with $\partial(X)$ being an equilibrated cut.
	\end{proof}

	We are now ready to give the full description of $\mathcal{S}^*(3)$.
	
	\begin{figure}[htbp]
		\centering
		\begin{subfigure}[t]{.65\textwidth}
			\centering
			\begin{minipage}[t]{.5\textwidth}
				\centering
				\begin{tikzpicture}
					[scale=.3]
					\foreach \i in {0,1,2,3,4,5,6,7,8,9,10,11}
					{
						\draw[rotate=360-30*(\i)+15] (0,5) node[circle, draw=black!80, inner sep=0mm, minimum size=3.3mm] (x_{\i}){\scriptsize ${\i}$};
					}
					\foreach \i/\j in {0/1,1/2,2/3,3/4,4/5,5/6,6/7,7/8,8/9,9/10,10/11,11/0}
					{
						\draw[line width=0.4mm, blue] (x_{\i}) -- (x_{\j});
					} 
					\foreach \i/\j in {1/4,5/8,9/0}
					{
						\draw[line width=0.4mm, blue] (x_{\i}) -- (x_{\j});
					}
					
					\foreach \i in {2,6,10}
					{
						\draw[rotate=360-30*(\i)+30] (0,8) node[circle, draw=black!80, inner sep=0mm, minimum size=0mm] (z_{\i}){};
					}
					
					\foreach \i in {3,7,11}
					{
						\draw[rotate=360-30*(\i)] (0,8) node[circle, draw=black!80, inner sep=0mm, minimum size=0mm] (z_{\i}){};
					}
					
					\foreach \i in {2,7,3,10,6,11}
					{
						\draw[dash pattern=on 1mm off .7mm, line width=0.4mm, red] (x_{\i}) -- (z_{\i});
					}

					\node[circle,draw, dotted, line width=0.3mm, inner sep=16.5mm] (CC) at (0,0){};

				\end{tikzpicture}
			\end{minipage}
			\begin{minipage}[t]{.4\textwidth}
				\centering
				\begin{tikzpicture}
					[scale=.3]
					\foreach \i in {0,1,2,3,4,5,6,7,8,9}
					{
						\draw[rotate=360-36*(\i)] (0,5) node[circle, draw=black!80, inner sep=0mm, minimum size=3.3mm] (x_{\i}){\scriptsize ${\i}$};
					}
					\foreach \i in {1,2}
					{
						\draw[rotate=360-180*(\i)] (0,2) node[circle, draw=black!80, inner sep=0mm, minimum size=3.3mm] (y_{\i}){\scriptsize $w_{\i}$};
					}
					\foreach \i/\j in {0/1,1/2,2/3,3/4,4/5,5/6,6/7,7/8,8/9,9/0}
					{
						\draw[line width=0.4mm, blue] (x_{\i}) -- (x_{\j});
					} 
					
					\foreach \i/\j in {1/2,9/2,4/1,6/1}
					{
						\draw[line width=0.4mm, blue] (x_{\i}) -- (y_{\j});
					} 
					\draw[line width=0.4mm, blue] (y_{1}) -- (y_{2});

					\foreach \i in {2,7}
					{						
						\draw[rotate=360-36*(\i)+6] (0,7.8) node[circle, draw=black!80, inner sep=0mm, minimum size=0mm] (z_{\i}){};
					}
					
					\foreach \i in {3,8}
					{						
						\draw[rotate=360-36*(\i)-6] (0,7.8) node[circle, draw=black!80, inner sep=0mm, minimum size=0mm] (z_{\i}){};
					}
					
					\foreach \i in {0,5}
					{
						\draw[rotate=360-36*(\i)] (0,7.8) node[circle, draw=black!80, inner sep=0mm, minimum size=0mm] (z_{\i}){};
					}
					
					\foreach \i in {0,5, 2,3, 7,8}
					{
						\draw[dash pattern=on 1mm off .7mm,, line width=0.4mm, red] (x_{\i}) -- (z_{\i});
					}
					
					\node[circle,draw, dotted, line width=0.3mm, inner sep=16.5mm] (CC) at (0,0){};

				\end{tikzpicture}
			\end{minipage}
			\caption{$\hat{G}_1$ with two embeddings}
			\label{fig:G1}
		\end{subfigure}
		\begin{subfigure}[t]{.33\textwidth}
			\centering
			\begin{tikzpicture}[scale=.3]
				\draw(0,0) node[circle, draw=black!80, inner sep=0mm, minimum size=3.3mm] (w){\scriptsize $w$};
				\foreach \i in {1,2,3,4,5,6,7,8,9}
				{
					\draw[rotate=360-40*(\i)] (0,5) node[circle, draw=black!80, inner sep=0mm, minimum size=3.3mm] (x_{\i}){\scriptsize ${\i}$};
				}
				
				\foreach \i/\j in {1/2,2/3,3/4,4/5,5/6,6/7,7/8,8/9,9/1}
				{
					\draw[line width=0.4mm, blue] (x_{\i}) -- (x_{\j});
				} 
				\foreach \i in {3,6,9}
				{
					\draw  [line width=0.4mm, blue] (x_{\i}) -- (w);
				}

				\foreach \i in {1,4,7}
				{
					\draw[rotate=360-40*(\i)+12] (0,7.8) node[circle, draw=black!80, inner sep=0mm, minimum size=0mm] (z_{\i}){};
				}
				
				\foreach \i in {2,5,8}
				{
					\draw[rotate=360-40*(\i)-12] (0,7.8) node[circle, draw=black!80, inner sep=0mm, minimum size=0mm] (z_{\i}){};
				}
				
				\foreach \i in {1,2,4,5, 7,8}
				{
					\draw[dash pattern=on 1mm off .7mm, line width=0.4mm, red] (x_{\i}) -- (z_{\i});
				}
				
				\node[circle,draw, dotted, line width=0.3mm, inner sep=16.5mm] (CC) at (0,0){};
				
			\end{tikzpicture}
			\caption{$\hat{G}_2$}
			\label{fig:Petersen}
		\end{subfigure}
		\caption{$\mathcal{S}^*(3)$}
		\label{fig:S*3}
	\end{figure}
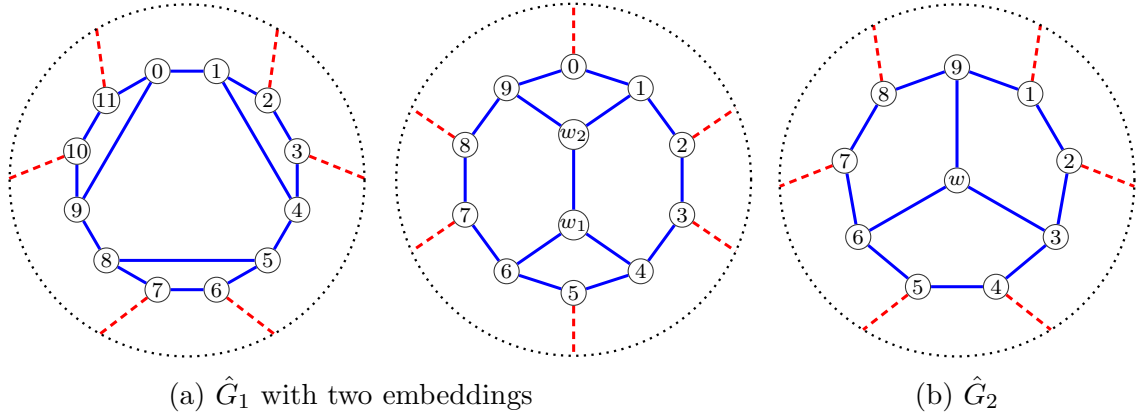

	\begin{theorem}
		The class $\mathcal{S}^*(3)$ consists of two signed graphs, depicted in Figure~\ref{fig:S*3}.
	\end{theorem}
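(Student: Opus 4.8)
The plan is to use Lemmas~\ref{lem:1-2face}, \ref{lemma:bridge-face} and \ref{lem:1-face2-face} to pin down the internal face structure of $G'$ and then reconstruct $G'$, hence $G$, in each surviving configuration. Fix a minimum signature $\sigma$ with $E^-_\sigma=\{x_1y_1,x_2y_2,x_3y_3\}$, take the canonical projective-planar embedding from Theorem~\ref{thm:CS*2}, and work with its planar part $G'=G-E^-_\sigma$, the outer cycle $C_O$, and $\mathcal R=\{x_1,x_2,x_3,y_1,y_2,y_3\}$, which lies on $C_O$ in this cyclic order. Since $G$ is cubic, every vertex of $\mathcal R$ has degree $2$ in $G'$ while every other vertex has degree $3$, and each vertex of $\mathcal R$ lies on $C_O$ and on exactly one internal face of $G'$. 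Moreover $G$ is bridgeless (a bridge $e$ would give $\ell(G)=\ell(G-e)$, contradicting criticality), so in $G'$ every edge borders two distinct faces.

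First I would record the global constraints. Let $p$, $q$, $b$ be the numbers of $1$-faces, $2$-faces and bridge-faces of $G'$; by Lemma~\ref{lem:1-2face} these are all the internal faces, $b\le 1$ by Lemma~\ref{lemma:bridge-face}, and since every bridge-face is a $0$-face (Proposition~\ref{prop:i-face}~\ref{F1}) each vertex of $\mathcal R$ lies on a $1$- or $2$-face, so $p+2q=6$. Cubicity forces $3|V(G')|-6=2|E(G')|$, hence $|V(G')|$ is even and, by Euler's formula, $G'$ has exactly $(|V(G')|-4)/2$ internal faces, i.e.\ $p+q+b=(|V(G')|-4)/2$. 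Assume now $b=0$. Walking around $C_O$, the internal face incident to $C_O$ from inside changes exactly at the degree-$3$ vertices of $C_O$ (using bridgelessness), so the $p+q$ internal faces form a cyclic sequence whose length equals the number of degree-$3$ vertices of $C_O$, with consecutive faces adjacent on the boundary. By Lemma~\ref{lem:1-face2-face} no two $1$-faces are consecutive, so $p\le q$; and $p+q\ge 3$, since otherwise some internal face would contain at least three vertices of $\mathcal R$, contradicting Proposition~\ref{prop:i-face}~\ref{F2}. Combined with $p+2q=6$, this leaves exactly $(p,q)\in\{(0,3),(2,2)\}$.

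Next I would reconstruct $G'$ in the three remaining cases. If $b=0$ and $(p,q)=(0,3)$, then $|V(G')|=10$, $C_O$ is a $9$-cycle whose three degree-$3$ vertices split it into three arcs carrying two $\mathcal R$-vertices each, and the single interior vertex (of degree $3$) must be joined to those three degree-$3$ vertices, giving $\hat{G}_2$. If $b=0$ and $(p,q)=(2,2)$, the faces alternate $1$-face, $2$-face around $C_O$, $C_O$ is a $10$-cycle with four degree-$3$ vertices, $|V(G')|=12$ leaves two interior vertices, and a short degree count shows those two are joined by exactly one edge and each joined to two of the four boundary vertices of degree $3$, the embedding fixing the pairing; this gives $\hat{G}_1$. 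If $b=1$, Lemma~\ref{lemma:bridge-face} says that $C_{F_0}\cap C_O$ consists of three arcs $P_1,P_2,P_3$ while $C_O\setminus C_{F_0}$ consists of three arcs $A_1,A_2,A_3$, each with exactly two vertices of $\mathcal R$; hence the $P_i$ contain none. The crux is to show that the region bounded by $A_j$ and the interior $C_{F_0}$-path joining its endpoints is a single face, necessarily a $2$-face (two faces there would be two $1$-faces adjacent on the boundary, contradicting Lemma~\ref{lem:1-face2-face}), and that each $P_i$ and each such interior path is a single edge (an interior vertex on one would need a third incident face that cannot exist). Then $G'$ is a $12$-cycle with three chords, and the cyclic order of $\mathcal R$, up to the symmetry $x_i\leftrightarrow y_i$, identifies it as $\hat{G}_1$.

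It then remains to verify the reverse inclusion, that $\hat{G}_1,\hat{G}_2\in\mathcal S^*(3)$; by Proposition~\ref{charact_S*} this reduces to checking that each is irreducible and critically $3$-frustrated and has no two edge-disjoint negative cycles, a direct finite computation. I expect the main difficulty to be the topological bookkeeping inside $C_O$: ruling out spurious interior vertices (equivalently, proving the relevant interior paths are single edges), and confirming that, after all the constraints, the interior attaches in a unique way in each case---the combinatorial reduction above and the final verification being comparatively routine.
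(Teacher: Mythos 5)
Your overall strategy is essentially the paper's: the same face-structure lemmas drive the same three-way case distinction (a bridge-face; two $1$-faces and two $2$-faces; three $2$-faces), followed by a reconstruction of $G'$. Your bookkeeping $p+2q=6$ together with Lemma~\ref{lem:1-face2-face} is a slightly tidier route to the admissible face profiles, and your reconstruction by direct degree counts on $C_O$ replaces the paper's device of suppressing the degree-$2$ vertices and invoking the list of cubic planar graphs on $4$ and $6$ vertices; these differences are cosmetic.

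There is, however, a genuine gap in your $(p,q)=(2,2)$ case. The plane graph you reconstruct --- a $10$-cycle $C_O$ with degree-$3$ vertices $d_1,d_2,d_3,d_4$ in cyclic order, interior vertices $u,w$ with $u$ joined to $d_1,d_2$, $w$ joined to $d_3,d_4$, and the edge $uw$ --- has two ``small'' internal faces (an arc plus two spokes) and two ``large'' ones (whose boundary also uses $uw$), alternating around $C_O$. The required $1,2,1,2$ alternation of $i$-faces is therefore consistent with \emph{two} placements of $\mathcal{R}$: either the small faces are the $1$-faces, or the large ones are. These give non-isomorphic candidates (in one the subdivided triangles have length $4$, in the other length $5$), and both satisfy every constraint extracted from Proposition~\ref{prop:i-face} and Lemmas~\ref{lem:1-2face}--\ref{lem:1-face2-face}. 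Only one is $\hat{G}_1$; the other is simply not critically $3$-frustrated, and excluding it needs an argument beyond face counting --- the paper shows that in that configuration the middle edge $uw$ of the interior $3$-path lies in no equilibrated cut, contradicting Theorem~\ref{thm:characterizations}. Your phrase ``the embedding fixing the pairing'' only fixes how $u$ and $w$ attach to the $d_i$, not which arcs carry one versus two vertices of $\mathcal{R}$, and your closing verification that $\hat{G}_1,\hat{G}_2\in\mathcal{S}^*(3)$ cannot repair this, since the spurious candidate is never identified as something to be checked. The remaining steps (ruling out extra interior vertices, the bridge-face reconstruction, the $(0,3)$ case) are sound and match the paper.
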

	
	\begin{proof}
		We consider the following three cases:
		\begin{itemize}
			\item $G'$ has a bridge-face $F$. By Lemma~\ref{lemma:bridge-face}, $F$ is the only bridge-face of $G'$ and $C_O\setminus C_{F}$ consists of three components each of which has exactly two elements from $\mathcal{R}$. Furthermore, it follows from Lemma~\ref{lem:1-face2-face} that the vertices of $\mathcal{R}$ are the only vertices on each of these components, as otherwise a vertex not in $\mathcal{R}$ would result in an $i_1$-face and an $i_2$-face with $i_1+i_2\leq 2$. This leads to the projective planar graph of Figure~\ref{fig:G1} (left).

			\item $G'$ has at least one $1$-face (and no bridge-face). Let $F_1$ be a $1$-face of $G'$. As $G'$ has no bridge-face, by Lemma~\ref{lem:1-2face}, it has no $0$-face. Furthermore, by Lemma~\ref{lem:1-face2-face}, each of the two faces adjacent to $F_1$ on the boundary are $2$-faces. As there are only six vertices in $\mathcal{R}$, and as there is no internal face by Lemma~\ref{lem:1-2face}, there is only one remaining face. Furthermore, this face is a $1$-face. Let $G''$ be the graph obtained from $G'$ by suppressing all vertices of $\mathcal{R}$ and note that $G''$ is cubic and planar.
			
			It then follows from Euler's formula that $|V(G'')|- \frac{3}{2}|V(G'')| +5 =2$, i.e., $|V(G'')|= 6$. But there are only two cubic graphs on $6$ vertices: $K_{3,3}$ and the $3$-prism. As $K_{3,3}$ is not planar, $G''$ is the $3$-prism. As each $1$-face of $G'$ is adjacent to two $2$-faces of $G'$, both of the triangles of $G''$ correspond to faces of the same type in $G'$. More precisely, either each corresponds to a $1$-face or each corresponds to a $2$-face. The former case leads to the projective planar graph of Figure~\ref{fig:G1} (right). In the latter case, we consider the middle edge of the $3$-path and we observe that this edge cannot be in an equilibrated cut. 
			
			\item Each face of $G'$ is a $2$-face. Hence, $G'$ has exactly three $2$-faces. Similar to the previous case we consider the graph $G''$ obtained from $G'$ by suppressing all vertices of $\mathcal{R}$. It follows from Euler's formula that $G''$ has four vertices and noting that $G''$ is cubic, hence, it must be $K_4$. Thus $(G, \sigma)$ is the signed graph in Figure~\ref{fig:Petersen}.
		\end{itemize}
		We note that the two signed graphs in Figure~\ref{fig:G1} are switching-isomorphic and thus up to switching $\mathcal{S}^*(3)$ consists of two signed graphs. 
	\end{proof}
	
	Note that the signed graph $\hat{G}_2$ of Figure~\ref{fig:Petersen} is a signed Petersen graph.

    \section[P3]{Critically $3$-frustrated signed planar graphs}\label{sec:planar}
	
	Let $\mathcal{P}^*(3)$ denote the class of irreducible non-decomposable critically $3$-frustrated signed planar graphs. 
	In this section, we show that each signed plane graph in the class $\mathcal{P}^*(3)$ has exactly six negative facial cycles and no positive facial cycles. Using this we conclude that there are ten non-isomorphic signed graphs (with respect to switching isomorphism) in $\mathcal{P}^*(3)$. They are depicted in Figure~\ref{fig:P*3}.
	
    We will need the next lemma that follows from the description of $\mathcal{L}^{*}(2)$.
	
	\begin{lemma}\label{lem:3NegativeCycles}
		Let $C_1, C_2$, and $C_3$ be three negative cycles of a signed graph $(G, \sigma)$. If $E(C_1)\cap E(C_2)\cap E(C_3)=\emptyset$, then the signed subgraph induced by $C_1, C_2,$ and $C_3$ contains either a $(K_4, -)$-subdivision or two edge-disjoint negative cycles. 
	\end{lemma}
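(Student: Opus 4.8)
The plan is to analyse the signed subgraph $H:=(C_1\cup C_2\cup C_3,\sigma)$ in the statement and to pin down its frustration index. The first step is to show that $\ell(H,\sigma)\ge 2$. Since $H$ contains the negative cycle $C_1$, it is unbalanced, so $\ell(H,\sigma)\ge 1$. Suppose $\ell(H,\sigma)=1$. Then by Lemma~\ref{lem:NegativeCycleCover} some single edge $e$ is a negative-cycle cover of $H$, i.e.\ every negative cycle of $H$ passes through $e$. As being negative is invariant under switching, $C_1$, $C_2$ and $C_3$ remain negative in every signature equivalent to $\sigma$, so each of them must contain $e$; this contradicts $E(C_1)\cap E(C_2)\cap E(C_3)=\emptyset$. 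Hence $\ell(H,\sigma)\ge 2$.

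Next I would extract a critically $2$-frustrated piece of $H$. Deleting a single edge changes the frustration index by at most $1$, and the edgeless graph has frustration index $0$; deleting the edges of $H$ one at a time therefore reaches, at some intermediate stage, a subgraph with frustration index exactly $2$. Choose such a subgraph $(H',\sigma)$ with as few edges as possible. For every $e\in E(H')$ minimality forces $\ell(H'-e,\sigma)\le 1$, while $\ell(H'-e,\sigma)\ge\ell(H',\sigma)-1=1$, so $\ell(H'-e,\sigma)=1$ for all $e$, i.e.\ $(H',\sigma)$ is critically $2$-frustrated. Now repeatedly un-subdivide $(H',\sigma)$ (the reverse of the multiedge subdivision operation, which terminates since the number of edges strictly decreases). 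By the cited result of~\cite{CS22} that a signed graph and its subdivisions have the same frustration index and are simultaneously critically frustrated, the outcome is an irreducible critically $2$-frustrated signed graph, hence by Theorem~\ref{thm:2family} it lies in $\mathcal{L}(2)=\{\,C_{-1}\cup C_{-1},\ 2C_{-1},\ (K_4,-)\,\}$, and $(H',\sigma)$ is a subdivision of it.

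It then remains to translate the three cases back to $H'$, and hence to $H$. If the irreducible model is $C_{-1}\cup C_{-1}$ or $2C_{-1}$, then $(H',\sigma)$ is the edge-disjoint union of two subdivisions of $C_{-1}$, that is, of two edge-disjoint negative cycles, and we are done. If the model is $(K_4,-)$, then $(H',\sigma)$ is a subdivision of $(K_4,-)$; since subdividing preserves the sign of every cycle, each cycle of $H'$ arising from a triangle of $K_4$ is negative, so $(H',\sigma)$ is a $(K_4,-)$-subdivision in the sense defined before the statement of Theorem~\ref{thm:Geelen-Guenin-Schrijver1}. In all cases $H$ contains a $(K_4,-)$-subdivision or two edge-disjoint negative cycles, as claimed.

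The only genuinely delicate points are bookkeeping: verifying that the un-subdivision step is legitimate within the paper's generalized (multiedge) notion of subdivision, and that in the last case one really obtains a $(K_4,-)$-subdivision as defined (each triangle-cycle negative), not merely a topological subdivision of $K_4$. If this becomes cumbersome, a shorter route avoids the classification of $\mathcal{L}(2)$ altogether: once $\ell(H,\sigma)\ge 2$ has been established, apply Theorem~\ref{thm:Geelen-Guenin-Schrijver1} to $H$, which is $\ell(H,\sigma)$-frustrated, to conclude that either $H$ contains a $(K_4,-)$-subdivision or it contains $\ell(H,\sigma)\ge 2$ edge-disjoint negative cycles. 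I would keep this as a fallback.
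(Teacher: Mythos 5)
Your main argument is correct and follows the paper's own route: establish $\ell(C_1\cup C_2\cup C_3,\sigma)\ge 2$ (the paper leaves this to the reader, and your negative-cycle-cover argument is exactly the intended one), extract an edge-minimal, hence critically, $2$-frustrated subgraph, and reduce to the classification of $\mathcal{L}(2)$ in Theorem~\ref{thm:2family}. Your fallback via Theorem~\ref{thm:Geelen-Guenin-Schrijver1} is also valid and would sidestep the un-subdivision bookkeeping, but it is not the route the paper takes.
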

	
	\begin{proof}
		Since $E(C_1)\cap E(C_2)\cap E(C_3)=\emptyset$, the frustration index of the signed subgraph induced by $C_1 \cup C_2 \cup C_3$ is at least $2$. Hence, it contains a critically $2$-frustrated subgraph. The statement then follows from Theorem~\ref{thm:2family}.
	\end{proof}

	Noting that each edge of a plane graph belongs to exactly two facial cycles, we have the following observation, which implies that any element of $\mathcal{P}^*(3)$ has at most six negative facial cycles.
	
	\begin{observation}\label{obs:FacialNegativeCycles}
		Every critically $k$-frustrated signed plane graph has at most $2k$ negative facial cycles. Moreover, if there are $2k$ negative facial cycles, then they are the only facial cycles.
	\end{observation}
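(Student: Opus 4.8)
The plan is to fix a minimum signature of $(G,\sigma)$ and count incidences between negative facial cycles and the negative edges. We may assume $\sigma$ is a minimum signature, so that $|E^-_\sigma|=\ell(G,\sigma)=k$; this does not affect anything relevant, since switching changes neither the underlying plane graph nor the set of negative cycles, hence neither the set of facial cycles nor which of them are negative. Now a negative facial cycle is in particular a negative cycle, so it contains at least one edge of $E^-_\sigma$, and each edge of a plane graph lies on exactly two facial cycles, so each edge of $E^-_\sigma$ contributes to at most two such incidences. Therefore the number of negative facial cycles is at most the number of incidences, which is at most $2|E^-_\sigma|=2k$, proving the first assertion.

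For the second assertion I would analyse the equality case. If $(G,\sigma)$ has exactly $2k$ negative facial cycles, then both inequalities just used are tight: every negative facial cycle contains exactly one edge of $E^-_\sigma$, and every edge of $E^-_\sigma$ lies on exactly two negative facial cycles; since an edge lies on exactly two facial cycles in total, this means both facial cycles through a negative edge are negative. Suppose, for contradiction, that some facial cycle $C_0$ is positive, and pick an edge $e_0\in E(C_0)$. By the criticality characterization Theorem~\ref{thm:characterizations}(2), there is a minimum signature $\sigma'$, switching equivalent to $\sigma$, with $\sigma'(e_0)=-$. Since switching equivalent signatures induce the same set of negative cycles \cite{Z82}, $(G,\sigma')$ has the same negative facial cycles as $(G,\sigma)$ — in particular exactly $2k$ of them, and $C_0$ is still positive. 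Applying the equality analysis to $\sigma'$, the edge $e_0\in E^-_{\sigma'}$ must lie on two negative facial cycles; but one of the two facial cycles through $e_0$ is $C_0$, which is positive, a contradiction. Hence there is no positive facial cycle, so the $2k$ negative facial cycles are all the facial cycles.

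The one genuine idea is not to stay with a single minimum signature: the equality structure by itself does not forbid a positive face (for non-critical signed plane graphs it genuinely need not), so criticality must be invoked by re-running the counting bound for a second minimum signature chosen to turn a prescribed edge of the suspected positive face negative. Everything else is bookkeeping; the only point to keep in mind is that ``facial cycle'' is read as ``a face whose boundary is a cycle,'' which is harmless here since any additional face with a non-cyclic boundary is simply not a facial cycle.
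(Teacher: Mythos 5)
Your proof is correct and, for the bound itself, is exactly the paper's (one-line) argument: each negative facial cycle contains a negative edge of a minimum signature, and each edge lies on at most two facial cycles. The paper states the ``moreover'' part without justification; your completion --- re-running the equality analysis for a second minimum signature, supplied by Theorem~\ref{thm:characterizations}(2), that makes an edge of a hypothetical positive facial cycle negative --- is a sound way to fill that gap, and you are right that criticality is genuinely needed there (a non-critical $(K_4,\sigma)$ with one negative edge has $2\ell$ negative faces and two positive ones).
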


	Next, we show that each signed plane graph in $\mathcal{P}^*(3)$ has exactly six negative facial cycles. In fact, we prove this for a larger class of critically $3$-frustrated signed graphs which are not necessarily irreducible.  	
	\begin{theorem}\label{thm:NoPositiveFace}
		Let $(G, \sigma)$ be a non-decomposable critically $3$-frustrated signed plane graph. Then $(G, \sigma)$ consists of six negative facial cycles.
	\end{theorem}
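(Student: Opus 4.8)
The plan is to pass to the planar dual and turn the statement into a count of negative faces. By Observation~\ref{obs:FacialNegativeCycles} it suffices to show that $(G,\sigma)$ has at least $6$ negative facial cycles, since then it has exactly $6$ and these are all of its faces. Because every edge lies on exactly two faces, the product of the signs of all facial cycles is $+$, so the number $f^-$ of negative facial cycles is even; thus $f^-\in\{0,2,4,6\}$ and I must exclude $0,2,4$. First I would observe that $G$ is $2$-connected: if $G$ were disconnected or had a cut vertex, the frustration index would split as a sum over the pieces (every cycle lies in one block, so $\ell$ is additive, using that $\ell$ equals the minimum size of a negative-cycle cover, Lemma~\ref{lem:NegativeCycleCover}); if two pieces had positive frustration index then $(G,\sigma)$ would be decomposable by Observation~\ref{obs:cycleunion}, while if some piece were balanced then deleting one of its edges would not lower $\ell$, contradicting criticality. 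Hence $G$ is $2$-connected, every face is bounded by a cycle, and $G$ has a connected loopless plane dual $G^\ast$ (loopless since $G$ has no bridge, again by criticality).

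The engine is the identity $\ell(G,\sigma)=$ (minimum number of edges of a $T$-join in $G^\ast$), where $T$ is the set of negative faces: the set $E^-_\sigma$, read as a set of edges of $G^\ast$, has odd degree at a face exactly when that face is a negative cycle, so $E^-_\sigma$ is a $T$-join; and switching equivalence amounts to adding an element of the cut space of $G$, which is precisely the cycle space of $G^\ast$, so the signatures switching equivalent to $\sigma$ correspond size-preservingly to the $T$-joins of $G^\ast$. Thus $\ell(G,\sigma)=3$ means the minimum $T$-join of $G^\ast$ has three edges, and by part (2) of Theorem~\ref{thm:characterizations} criticality says every edge of $G^\ast$ lies on some minimum $T$-join. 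Now $f^-=0$ is immediate, as then $(G,\sigma)$ is balanced. For $f^-=2$, with $T=\{F_1,F_2\}$, the minimum $T$-join is a shortest $F_1F_2$-path, so $\mathrm{dist}_{G^\ast}(F_1,F_2)=3$; the three edge sets joining consecutive layers of a breadth-first search from $F_1$ are pairwise edge-disjoint cuts of $G^\ast$ separating $F_1$ from $F_2$, and each such cut, being dual to an element of the cycle space of $G$, contains the edges of a cycle of $G$ enclosing the face $F_1$ but not $F_2$, i.e.\ a negative cycle. This gives three edge-disjoint negative cycles, so $(G,\sigma)$ is decomposable by Observation~\ref{obs:cycleunion} --- a contradiction.

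The heart of the argument is $f^-=4$, say $T=\{F_1,F_2,F_3,F_4\}$. Here I would use the classical fact that the minimum size of a $T$-join equals $\min_M\sum_{\{u,v\}\in M}\mathrm{dist}_{G^\ast}(u,v)$ over perfect matchings $M$ of $T$; since this value is $3$ and distances between distinct vertices are at least $1$, after relabelling the minimum is attained by $\{\{F_1,F_2\},\{F_3,F_4\}\}$ with $\mathrm{dist}(F_1,F_2)=1$ and $\mathrm{dist}(F_3,F_4)=2$, while every perfect matching of $T$ has total distance at least $3$. A $T$-join with three edges and $|T|=4$ is a forest that is either a claw $K_{1,3}$ whose four vertices all lie in $T$, or a path of length two between two vertices of $T$ (through a vertex outside $T$) together with a single edge joining the other two vertices of $T$. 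Criticality forces every edge of $G^\ast$ to occur in such a $T$-join, and from this one can read off the adjacencies among $F_1,F_2,F_3,F_4$ and the common-neighbour vertices they force in $G^\ast$. I expect the outcome to be that $F_1$ and $F_2$ are each at distance $1$ from each of $F_3,F_4$, so that the matching $\{\{F_1,F_3\},\{F_2,F_4\}\}$ has total distance $2<3$, a contradiction --- unless the forced structure is so rigid that $G$ degenerates (e.g.\ the boundary of some negative face coincides with another facial cycle, forcing $G$ to be a single cycle, impossible with four negative faces; here one uses that $G$ has no loops and no two parallel edges of opposite sign, so no negative face is a digon). An alternative I would also try is to delete an edge of $G$ dual to the distance-$1$ pair $\{F_1,F_2\}$: this merges $F_1,F_2$ into a positive face and leaves a critically $2$-frustrated plane graph whose only negative faces are $F_3,F_4$, from which (modulo the possible loss of $2$-connectivity, handled separately) the $f^-=2$ analysis again yields three edge-disjoint negative cycles in $G$, contradicting non-decomposability. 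Lemma~\ref{lem:3NegativeCycles} is a useful companion: applied to triples of the four negative facial cycles it shows each triple spans either a $(K_4,-)$-subdivision or two edge-disjoint negative cycles, and the second alternative, together with a suitably chosen further negative face, tends to produce three edge-disjoint negative cycles.

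Combining the three exclusions gives $f^-=6$, and by Observation~\ref{obs:FacialNegativeCycles} these six negative cycles are exactly the facial cycles of $(G,\sigma)$. The routine parts are the reduction to counting, the $2$-connectivity, the duality dictionary, and the cases $f^-=0,2$; the real obstacle is $f^-=4$, where the bookkeeping of which dual edges can lie on which minimum $T$-joins, and the elimination of the degenerate configurations, is delicate.
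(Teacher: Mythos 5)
Your reduction to showing that the number $f^-$ of negative faces is at least $6$, the parity argument giving $f^-\in\{0,2,4,6\}$, the $2$-connectivity step, the dictionary $\ell(G,\sigma)=\text{minimum $T$-join of }G^\ast$ with $T$ the set of negative faces, and the exclusion of $f^-\in\{0,2\}$ are all sound; in particular the $f^-=2$ argument (three pairwise edge-disjoint BFS-layer cuts of $G^\ast$, each lying in the cycle space of $G$ and containing an odd number of negative edges, hence each containing a negative cycle, whence three edge-disjoint negative cycles and decomposability by Observation~\ref{obs:cycleunion}) is a clean route not found in the paper. But the case $f^-=4$, which you yourself call ``the real obstacle'', is not proved: you describe two routes you ``expect'' or ``would try'', and neither closes. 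For the first, the constraint that every edge of $G^\ast$ lies on a minimum $T$-join (a $K_{1,3}$ with all four vertices in $T$, or a $T$-edge plus a $2$-path through one non-$T$ vertex) does not by itself force a matching of $T$ of total distance less than $3$: an edge $F_1F_3$ is legitimately covered by the minimum $T$-join consisting of $F_1F_3$ together with a $2$-path from $F_2$ to $F_4$ whenever $\mathrm{dist}(F_2,F_4)=2$, and this keeps all three matchings at total distance exactly $3$. So the contradiction you hope to ``read off'' must come from the planar embedding, not from the distance bookkeeping alone. For the second route, deleting the edge $e$ of $G$ dual to the distance-$1$ pair $\{F_1,F_2\}$ leaves $\ell(G-e,\sigma)=2$ with two negative faces at dual distance $2$, so the layer-cut argument produces only \emph{two} edge-disjoint negative cycles, not three; and two edge-disjoint negative cycles do not contradict non-decomposability of a critically $3$-frustrated signed graph (Observation~\ref{obs:cycleunion} needs edge-disjoint subgraphs whose frustration indices sum to $3$, and Figure~\ref{fig:K5minor} exhibits non-decomposable members of $\mathcal{L}^*(3)$ that do contain such a pair). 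As written, the $f^-=4$ case is therefore a genuine gap.

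For contrast, the paper avoids the dual entirely: non-decomposability together with Theorem~\ref{thm:Geelen-Guenin-Schrijver1} yields a $(K_4,-)$-subdivision, one fixes a minimum signature with exactly one negative edge off that subdivision, and one shows that no facial cycle carries two negative edges by exhibiting either an edge-cut with more negative than positive edges or, via Lemma~\ref{lem:3NegativeCycles} and Observation~\ref{obs:cycleunion}, a decomposition. If you want to complete your approach, the missing ingredient is an argument in the $f^-=4$ case that genuinely uses the cyclic arrangement of the four negative faces in the plane around the forced edge and $2$-path; that is exactly the work done by the paper's Case~(1)/Case~(2) analysis.
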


	\begin{proof}
     Since $(G,\sigma)$ is not decomposable, by Theorem~\ref{thm:Geelen-Guenin-Schrijver1} $(G,\sigma)$ contains a $(K_4,-)$-subdivision $(H,\sigma)$ as a subgraph. Let $e_1$ be an edge of $E(G\setminus H)$, noting that it is not an empty set because $\ell(G,\sigma)=3$. Without loss of generality, we assume that $\sigma$ is a minimum signature where $e_1$ is assigned to be negative. We observe that the other two negative edges of $E^-_\sigma$ is on the $(K_4,-)$-subdivision $(H,\sigma)$.
		
	To prove the theorem it suffices to show that each facial cycle of $(G,\sigma)$ contains at most one negative edge. That is because, this together with the fact that $\ell(G,\sigma)=3$ would imply the existence of six negative facial cycles. The claim then follows from Observation~\ref{obs:FacialNegativeCycles}.
        
		As there are only two negative edges in $(H, \sigma)$, say $e_2$ and $e_3$, no facial cycle of $(H, \sigma)$ contains two negative edges. Thus in $(G, \sigma)$ no facial cycle contains three negative edges. It remains to show that no facial cycle of $(G, \sigma)$ contains two negative edges. Assume to the contrary that $C_{F_2}$ is such a facial cycle. As the negative edges cannot be $e_2$ and $e_3$, and by the symmetry between these two labels, we may assume that $e_1$ and $e_2$ are the negative edges of $C_{F_2}$. Let $C_{F_1}$ and $C_{F_3}$ be the other facial cycles incident with $e_1$ and $e_2$, respectively. 
		Observe that $e_3$ neither belongs to $C_{F_1}$ nor to $C_{F_3}$, as otherwise $(G, \sigma)$ has only two negative faces, contradicting the fact that it contains a $(K_4,-)$-subdivision.
		See Figure~\ref{fig:F123} for an illustration where a blue (or solid) $x_ix_j$-connection presents a positive path some of which could be of length $0$, red (or dashed) connections each shows a negative path, thus each of length at least $1$. We first claim that $C_{F_1}$ and $C_{F_3}$ have no common edge. Otherwise, a common edge $e'$ together with $e_1$ and $e_2$ forms an edge-cut, and by switching at this edge-cut we have a signature with only $2$ negative edges.

		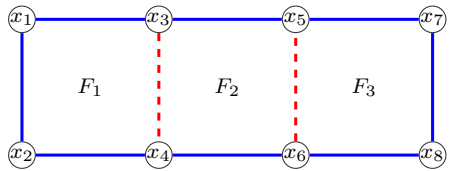
\begin{figure}[htbp]
			\centering
			\begin{minipage}[t]{.4\textwidth}
				\centering
				\begin{tikzpicture}
					[scale=.3]
					\draw(9, -3) node[circle, draw=white!80, inner sep=0mm, minimum size=3.3mm] (F1){\scriptsize $F_1$};
					
					\draw(15, -3) node[circle, draw=white!80, inner sep=0mm, minimum size=3.3mm] (F2){\scriptsize $F_2$};
					
					\draw(21, -3) node[circle, draw=white!80, inner sep=0mm, minimum size=3.3mm] (F3){\scriptsize $F_3$};
					
					\foreach \i/\j in {1/1,2/3,3/5,4/7}
					{
						\draw(6*\i, 0) node[circle, draw=black!80, inner sep=0mm, minimum size=3.3mm] (x\j){\scriptsize $x_{\j}$};
					}
					
					\foreach \i/\j in {1/2,2/4,3/6,4/8}
					{
						\draw(6*\i, -6) node[circle, draw=black!80, inner sep=0mm, minimum size=3.3mm] (x\j){\scriptsize $x_{\j}$};
					}
					
					\foreach \i/\j in {1/3,3/5,5/7,7/8,8/6,6/4,4/2,2/1}
					{
						\draw  [bend left=18, line width=0.4mm, blue] (x\i) -- (x\j);
					}
					
					\foreach \i/\j in {3/4,6/5}
					{
						\draw  [bend left=18, dashed, line width=0.4mm, red] (x\i) -- (x\j);
					}	
				\end{tikzpicture}
				\caption{$F_1, F_2$ and $F_3$}
				\label{fig:F123}
			\end{minipage}\hfil
		\end{figure}

		Let $C_{F_4}$ and $C_{F_5}$ be the two negative facial cycles of $(G, \sigma)$ such that $e_3\in E(C_{F_4}\cap C_{F_5})$. Observe that each of $C_{F_4}$ and $C_{F_5}$ must share at least one edge with either $C_{F_1}$ or $C_{F_3}$. Otherwise, we would have a set of three edge-disjoint negative cycles, by Observation~\ref{obs:cycleunion} contradicting the assumption that $(G, \sigma)$ is non-decomposable. We now consider the following two cases. 		
		
		\begin{figure}[htbp]
			\centering
			\begin{minipage}[t]{.4\textwidth}
				\centering
				\begin{tikzpicture}
					[scale=.24]
					\draw(9, -3) node[circle, draw=white!80, inner sep=0mm, minimum size=3.3mm] (F1){\scriptsize $F_1$};
					
					\draw(15, -3) node[circle, draw=white!80, inner sep=0mm, minimum size=3.3mm] (F2){\scriptsize $F_2$};
					
					\draw(21, -3) node[circle, draw=white!80, inner sep=0mm, minimum size=3.3mm] (F3){\scriptsize $F_3$};
					
					\foreach \i/\j in {1/1,2/3,3/5,4/7}
					{
						\draw(6*\i, 0) node[circle, draw=black!80, inner sep=0mm, minimum size=3.3mm] (x\j){\scriptsize $x_{\j}$};
					}
					
					\foreach \i/\j in {1/2,2/4,3/6,4/8}
					{
						\draw(6*\i, -6) node[circle, draw=black!80, inner sep=0mm, minimum size=3.3mm] (x\j){\scriptsize $x_{\j}$};
					}
					
					\foreach \i/\j in {1/3,3/5,5/7,8/6,6/4,4/2,2/1}
					{
						\draw  [line width=0.4mm, blue] (x\i) -- (x\j);
					}
					
					\draw (9,3) node[circle, draw=black!80, inner sep=0mm, minimum size=3.3mm] (x9){\scriptsize $x_{9}$};
					\draw (21,3) node[circle, draw=black!80, inner sep=0mm, minimum size=3.3mm] (x0){\scriptsize $x_{0}$};
					
					\draw  [line width=0.4mm, blue] (9,0) -- (x9);
					\draw  [line width=0.4mm, blue] (x0) -- (21,0);

                    \draw (24,-2) node[circle, draw=black!80, inner sep=0mm, minimum size=2.4mm] (y){};
                    \draw (24,-4) node[circle, draw=black!80, inner sep=0mm, minimum size=2.4mm] (z){};
                    \draw  [line width=0.4mm, blue] (x7) --(y)--(z)-- (x8);
                    
					\draw  [line width=0.4mm, blue]  (x0).. controls (24,4) and (28,2.6) .. (y);
					\draw  [line width=0.4mm, blue] (x9) .. controls (24,10) and (34,5) .. (z);
					
					\draw(15, 1.5) node[circle, draw=white!80, inner sep=0mm, minimum size=3.3mm] (F4){\scriptsize $F_4$};
					\draw(22,4.4) node[circle, draw=white!80, inner sep=0mm, minimum size=3.3mm] (F5){\scriptsize $F_5$};
					\draw[dashed, line width=0.4mm, red] (x0)-- (x9);
					
					\foreach \i/\j in {3/4,6/5}
					{
						\draw[dashed, line width=0.4mm, red] (x\i) -- (x\j);
					}	
				\end{tikzpicture}
				\caption{Case ($1$)}
				\label{fig:Case1}
			\end{minipage}
			\begin{minipage}[t]{.4\textwidth}
				\centering
				\begin{tikzpicture}
					[scale=.24]
					\draw(9, -3) node[circle, draw=white!80, inner sep=0mm, minimum size=3.3mm] (F1){\scriptsize $F_1$};
					
					\draw(15, -3) node[circle, draw=white!80, inner sep=0mm, minimum size=3.3mm] (F2){\scriptsize $F_2$};
					
					\draw(21, -3) node[circle, draw=white!80, inner sep=0mm, minimum size=3.3mm] (F3){\scriptsize $F_3$};
					
					\foreach \i/\j in {1/1,2/3,3/5,4/7}
					{
						\draw(6*\i, 0) node[circle, draw=black!80, inner sep=0mm, minimum size=3.3mm] (x\j){\scriptsize $x_{\j}$};
					}
					
					\foreach \i/\j in {1/2,2/4,3/6,4/8}
					{
						\draw(6*\i, -6) node[circle, draw=black!80, inner sep=0mm, minimum size=3.3mm] (x\j){\scriptsize $x_{\j}$};
					}
					
					\foreach \i/\j in {1/3,3/5,5/7,7/8,8/6,6/4,4/2}
					{
						\draw  [bend left=18, line width=0.4mm, blue] (x\i) -- (x\j);
					}
					
					\foreach \i/\j in {3/4,6/5}
					{
						\draw  [bend left=18, dashed, line width=0.4mm, red] (x\i) -- (x\j);
					}	
					
					\foreach \i/\j in {3/4,6/5}
					{
						\draw[dashed, line width=0.4mm, red] (x\i) -- (x\j);
					}	
					
					\draw (9,3) node[circle, draw=black!80, inner sep=0mm, minimum size=3.3mm] (x9){\scriptsize $x_{9}$};
					\draw (15,3) node[circle, draw=black!80, inner sep=0mm, minimum size=3.3mm] (x0){\scriptsize $x_{0}$};
					\draw (6,-2) node[circle, draw=black!80, inner sep=0mm, minimum size=2.4mm] (y){};
					\draw (6,-4) node[circle, draw=black!80, inner sep=0mm, minimum size=2.4mm] (z){};
					\draw  [line width=0.4mm, blue] (9,0) -- (x9);
					\draw  [line width=0.4mm, blue] (x0) -- (15,0);
					\draw(12, 1.5) node[circle, draw=white!80, inner sep=0mm, minimum size=3.3mm] (F4){\scriptsize $F_4$};
					\draw[dashed, line width=0.4mm, red] (x9)-- (x0);
					\draw  [line width=0.4mm, blue] (x9).. controls (6,4.2) and (2,2) .. (y);
					\draw  [line width=0.4mm, blue] (x0) .. controls (6,10) and (-3,5) .. (z);
					\draw  [line width=0.4mm, blue] (x1) -- (y) -- (z) -- (x2);
					
					\draw(8, 4.4) node[circle, draw=white!80, inner sep=0mm, minimum size=3.3mm] (F5){\scriptsize $F_5$};
				\end{tikzpicture}
				\caption{Case ($2$)}
				\label{fig:Case2}
			\end{minipage}
		\end{figure}

		\medskip
		\noindent
		{\bf Case} (1): $C_{F_4}$ shares a common edge with (at least) one of $C_{F_1}$ and $C_{F_3}$, and $C_{F_5}$ shares a common edge with the other. 
  
        By symmetry, we assume that $C_{F_4}$ shares a common edge with $C_{F_1}$, and hence $C_{F_5}$ shares a common edge with $C_{F_3}$. See Figure~\ref{fig:Case1}. Then there is an edge-cut crossing the faces $F_4,F_1,F_2,F_3,F_5$, and $F_4$ in this order containing two positive edges and three negative edges, a contradiction with $\ell(G,\sigma)=3$.

		\medskip
		\noindent
		{\bf Case} (2): Each of $C_{F_4}$ and $C_{F_5}$ shares a common edge with the same $C_{F_i}$ for $i\in\{1,3\}$.
  
        By symmetry, assume that each of $C_{F_4}$ and $C_{F_5}$ shares a common edge with $C_{F_1}$ but none with $C_{F_3}$. See Figure~\ref{fig:Case2}. Therefore, $C_{F_3}$ is edge-disjoint from the negative facial cycles $C_{F_1}, C_{F_4}$, and $C_{F_5}$. 
		Furthermore, by Lemma~\ref{lem:3NegativeCycles}, $C_{F_1} \cup C_{F_4} \cup C_{F_5}$ contains a critically $2$-frustrated signed graph. Note that such a critically $2$-frustrated signed graph is edge-disjoint from $C_{F_3}$. Since $C_{F_3}$ is a negative facial cycle (i.e., a critically $1$-frustrated signed graph), $(G, \sigma)$ is decomposable, a contradiction.
	\end{proof}
	
	\begin{corollary}
		If $(G, \sigma) \in \mathcal{P}^*(3)$, then $(G, \sigma)$ is simple. Moreover, for each minimum signature $\sigma$, every facial cycle contains exactly one negative edge.
	\end{corollary}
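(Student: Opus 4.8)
The plan is to read off both claims from Theorem~\ref{thm:NoPositiveFace}, which gives that $(G,\sigma)$ has exactly six facial cycles, all negative. First I would record a preliminary: since $(G,\sigma)\in\mathcal{P}^*(3)$ is non-decomposable, $G$ is $2$-connected --- a cut vertex would split $G$ into edge-disjoint subgraphs meeting in that vertex, each critically frustrated by the criticality of $(G,\sigma)$ together with additivity of the frustration index, hence a decomposition. In particular $G$ is bridgeless, so every edge lies on exactly two distinct facial cycles and every facial walk is a (simple) cycle.

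For the ``Moreover'' part, let $\sigma$ be any minimum signature, so $|E^-_\sigma|=3$. Switching changes neither the plane graph, its frustration index, nor decomposability, so Theorem~\ref{thm:NoPositiveFace} still applies and gives six negative facial cycles. Double-counting incidences between the three edges of $E^-_\sigma$ and the six facial cycles: each facial cycle is negative and so contributes at least one, for a total of at least six; each negative edge lies on exactly two facial cycles, for a total of exactly $2\cdot 3=6$. Therefore every facial cycle contains exactly one negative edge.

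For simplicity, recall $(G,\sigma)$ has no loop and no two parallel edges of opposite sign (Observation~\ref{obs:loops}, as it is non-decomposable), so I only need to exclude two parallel edges $e_1,e_2$ between vertices $x,y$ of the same sign. Using Theorem~\ref{thm:characterizations}\,(2), fix a minimum signature $\pi$ with $\pi(e_1)=-$; every switching flips $e_1$ and $e_2$ together, so $\pi(e_2)=-$ as well, and $E^-_\pi=\{e_1,e_2,e_3\}$ for a third edge $e_3$. The simple closed curve $e_1\cup e_2$ bounds two closed discs $D,D'$; since $e_3\notin\{e_1,e_2\}$ its interior lies in one of the open discs, say $\mathrm{int}(D')$, whence both faces incident to $e_3$ lie in $\mathrm{int}(D')$. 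The two faces incident to $e_1$, the two incident to $e_2$, and the two incident to $e_3$ are pairwise distinct --- each carries a unique negative edge by the ``Moreover'' part, and the two faces of an edge are distinct as $G$ is bridgeless --- hence they are all six faces of $(G,\sigma)$, and of these exactly the $D$-side faces of $e_1$ and of $e_2$ lie in $\mathrm{int}(D)$. If $\mathrm{int}(D)$ contained no vertex, then either $D$ were a single face bounded by the digon $e_1e_2$, which carries two negative edges and contradicts the ``Moreover'' part, or $\mathrm{int}(D)$ contains a further $x$--$y$ edge, necessarily parallel to $e_1,e_2$ hence negative, forcing it to equal $e_3$ and contradicting $e_3\subseteq D'$. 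So there is a vertex $z\in\mathrm{int}(D)\setminus\{x,y\}$; each face incident to $z$ lies in $\mathrm{int}(D)$, so $z$ meets at most two faces, whereas in a $2$-connected plane graph a degree-$d$ vertex meets exactly $d$ faces and $d(z)\ge3$ by irreducibility --- a contradiction.

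The hard part is this last paragraph: the face-count assertions are bookkeeping on top of Theorem~\ref{thm:NoPositiveFace}, but ruling out same-sign parallel edges requires turning ``one negative edge per facial cycle'' into planar data, and the delicate point is to locate the side of the digon $e_1e_2$ containing exactly two faces of $(G,\sigma)$ and then exhibit inside it a vertex of degree less than $3$, contradicting irreducibility.
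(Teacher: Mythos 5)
Your proof is correct. The ``Moreover'' part coincides with the paper's (which dismisses it as immediate): three negative edges, each on two faces, versus six faces each needing at least one negative edge. For simplicity, however, you take a genuinely different route. The paper's argument is one line: given two same-sign parallel edges, pass to a planar embedding in which they bound a digon face (delete one, embed, re-insert it alongside the other); that face is then a positive facial cycle, contradicting Theorem~\ref{thm:NoPositiveFace}, which applies to every embedding. You instead stay in the fixed embedding and argue topologically: the digon $e_1\cup e_2$ separates the sphere, the ``one negative edge per face'' count forces exactly two of the six faces into one side $D$, and then either $\mathrm{int}(D)$ is a single digon face (two negative edges, contradiction) or it contains a vertex, which by $2$-connectedness and irreducibility ($d\geq 3$) must meet at least three distinct faces inside $\mathrm{int}(D)$ --- impossible. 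Your version is longer and additionally invokes irreducibility and $2$-connectedness (your justification of the latter via the block decomposition is standard and consistent with the paper's own assertion that these graphs are $2$-edge-connected), but it buys you independence from the re-embedding step, which the paper uses implicitly without proof. Both arguments are sound; the paper's is more economical, yours is more self-contained relative to the chosen plane embedding.
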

	
	\begin{proof}
	    By Observation~\ref{obs:loops}, there is no loop in $(G, \sigma)$ and no two parallel edges of different signs. 
		If there exist two parallel edges with the same sign, then in some planar embedding of $(G, \sigma)$ they induce a positive facial cycle, contradicting Theorem \ref{thm:NoPositiveFace}. The moreover part is immediate from the fact that there are six facial cycles.
	\end{proof}

	Now we are ready to describe the elements of the class $\mathcal{P}^*(3)$.
	
	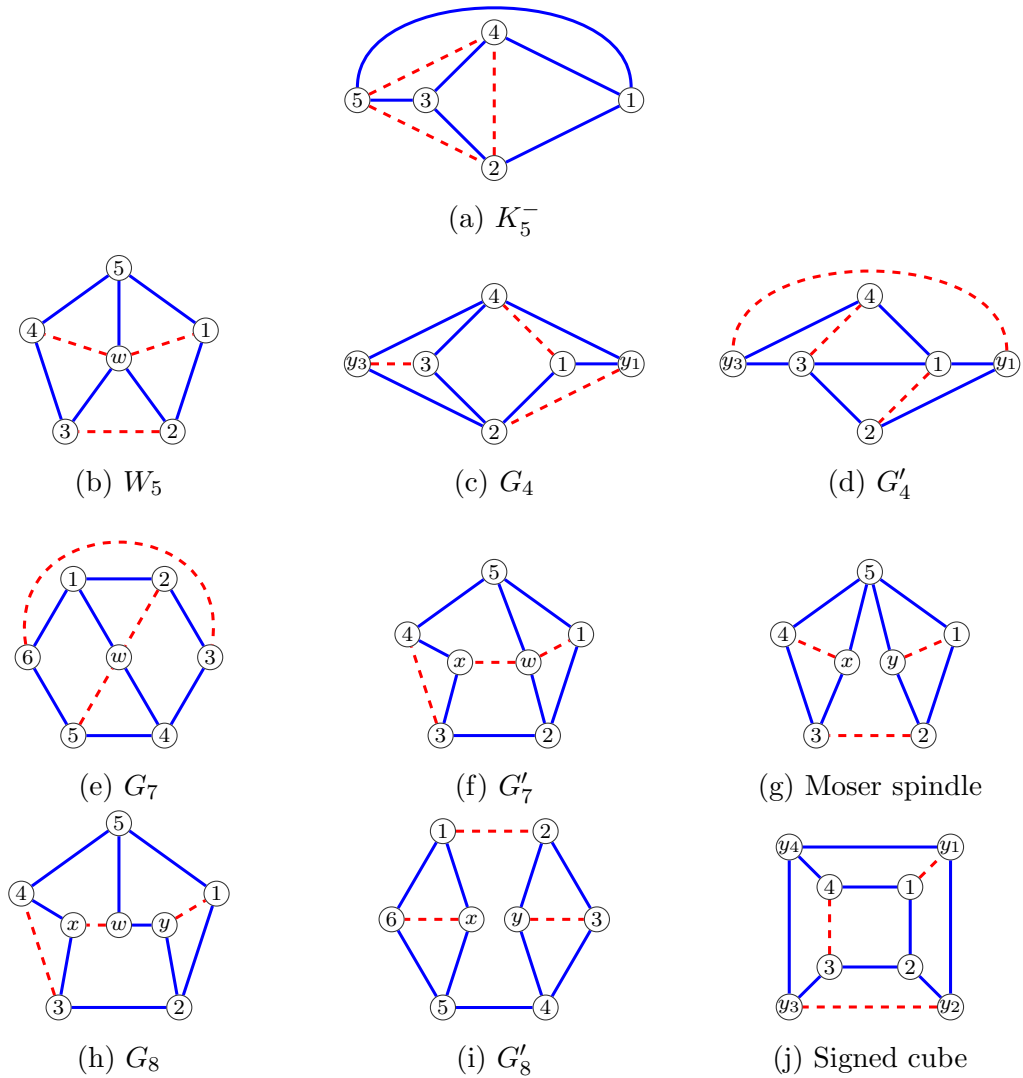
\begin{figure}[htbp]
		\centering 
		\begin{subfigure}[t]{.3\textwidth}
			\centering
			\begin{tikzpicture}
				[scale=.3]
				\foreach \i/\j in {2,3,4}
				{
					\draw[rotate=360-90*(\i)] (0,3) node[circle, draw=black!80, inner sep=0mm, minimum size=3.3mm] (x_{\i}){\scriptsize ${\i}$};}
				
				\draw[rotate=270] (0,6) node[circle, draw=black!80, inner sep=0mm, minimum size=3.3mm] (y_{1}){\scriptsize $1$};
				
				\draw[rotate=90] (0,6) node[circle, draw=black!80, inner sep=0mm, minimum size=3.3mm] (y_{5}){\scriptsize $5$};
				
				\foreach \i/\j in {2/3,3/4}
				{
					\draw  [line width=0.4mm, blue] (x_{\i}) -- (x_{\j});
				}
				\foreach \i/\j in {2/4}
				{
					\draw[dashed, line width=0.4mm, red] (x_{\i}) -- (x_{\j});
				}
				\foreach \i/\j in {1/4,5/3,1/2}
				{
					\draw[line width=0.4mm, blue] (y_{\i}) -- (x_{\j});
				}
				\foreach \i/\j in {5/4,5/2}
				{
					\draw[dashed, line width=0.4mm, red] (y_{\i}) -- (x_{\j});
				}
				
				\draw[line width=0.4mm, blue] (y_{1}) edge[bend right=90] (y_{5});
			\end{tikzpicture}
			\caption{$K_5^-$}
			\label{fig:K_5-}
		\end{subfigure}
		
		\begin{subfigure}[t]{.3\textwidth}
			\centering
			\begin{tikzpicture}
				[scale=.3]
				\draw(0,0) node[circle, draw=black!80, inner sep=0mm, minimum size=3.3mm] (w){\scriptsize $w$};
				
				\foreach \i/\j in {1,2,3,4,5}
				{
					\draw[rotate=360-72*(\i)] (0,4) node[circle, draw=black!80, inner sep=0mm, minimum size=3.3mm] (x_{\i}){\scriptsize ${\i}$};
				}
				
				\foreach \i/\j in {1/2,3/4,4/5,5/1}
				{
					\draw[line width=0.4mm, blue] (x_{\i}) -- (x_{\j});
				}
				\foreach \i/\j in {2/3}
				{
					\draw[dashed, line width=0.4mm, red] (x_{\i}) -- (x_{\j});
				}
				\draw[dashed, line width=0.4mm, red] (w) -- (x_{1});
				\draw[dashed, line width=0.4mm, red] (w) -- (x_{4});
				\draw[line width=0.4mm, blue] (w) -- (x_{2});
				\draw[line width=0.4mm, blue] (w) -- (x_{3});
				\draw[line width=0.4mm, blue] (w) -- (x_{5});
			\end{tikzpicture}
			\caption{$W_5$}
			\label{fig:W5}
		\end{subfigure}
		\begin{subfigure}[t]{.3\textwidth}
			\centering
			\begin{tikzpicture}
				[scale=.3]
				\foreach \i/\j in {1,2,3,4}
				{
					\draw[rotate=360-90*(\i)] (0,3) node[circle, draw=black!80, inner sep=0mm, minimum size=3.3mm] (x_{\i}){\scriptsize ${\i}$};
				}
				
				\foreach \i/\j in {1,3}
				{
					\draw[rotate=360-90*(\i)] (0,6) node[circle, draw=black!80, inner sep=0mm, minimum size=3.3mm] (y_{\i}){\scriptsize $y_{\i}$};
				}
				
				\foreach \i/\j in {1/2,2/3,3/4}
				{
					\draw  [line width=0.4mm, blue] (x_{\i}) -- (x_{\j});
				}
				
				\foreach \i/\j in {1/4}
				{
					\draw  [dashed, line width=0.4mm, red] (x_{\i}) -- (x_{\j});
				}
				
				\foreach \i/\j in {3/2,3/4,1/1,1/4}
				{
					\draw  [line width=0.4mm, blue] (y_{\i}) -- (x_{\j});
				}
				\foreach \i/\j in {3/3,1/2}
				{
					\draw  [dashed, line width=0.4mm, red] (y_{\i}) -- (x_{\j});
				}
			\end{tikzpicture}
			\caption{$G_4$}
			\label{fig:2K_4}
		\end{subfigure}
		\begin{subfigure}[t]{.3\textwidth}
			\centering
			\begin{tikzpicture}
				[scale=.3]
				\foreach \i/\j in {1,2,3,4}
				{
					\draw[rotate=360-90*(\i)] (0,3) node[circle, draw=black!80, inner sep=0mm, minimum size=3.3mm] (x_{\i}){\scriptsize ${\i}$};
				}
				
				\foreach \i/\j in {1,3}
				{
					\draw[rotate=360-90*(\i)] (0,6) node[circle, draw=black!80, inner sep=0mm, minimum size=3.3mm] (y_{\i}){\scriptsize $y_{\i}$};
				}
				
				\foreach \i/\j in {2/3,1/4,1/3}
				{
					\draw  [line width=0.4mm, blue] (x_{\i}) -- (x_{\j});
				}
				
				\foreach \i/\j in {3/4,1/1,1/2, 3/3}
				{
					\draw  [line width=0.4mm, blue] (y_{\i}) -- (x_{\j});
				}
				\foreach \i/\j in {1/2,3/4}
				{
					\draw  [dashed, line width=0.4mm, red] (x_{\i}) -- (x_{\j});
				}
				
				\draw[dashed, line width=0.4mm, red] (y_{1}) edge[bend right=90] (y_{3});
			\end{tikzpicture}
			\caption{$G'_4$}
			\label{fig:G4}
		\end{subfigure}

		\begin{subfigure}[t]{.3\textwidth}
			\centering
			\begin{tikzpicture}
				[scale=.3]
				\draw(0,0) node[circle, draw=black!80, inner sep=0mm, minimum size=3.3mm] (w){\scriptsize $w$};
				
				\foreach \i/\j in {1,2,3,4,5,6}
				{
					\draw[rotate=360-60*(\i)+90] (0,4) node[circle, draw=black!80, inner sep=0mm, minimum size=3.3mm] (x_{\i}){\scriptsize ${\i}$};
				}
				
				\foreach \i/\j in {1/2,2/3,3/4,4/5,5/6,6/1}
				{
					\draw[line width=0.4mm, blue] (x_{\i}) -- (x_{\j});
				}
				
				\foreach \i in {2,5}
				{
					\draw[dashed, line width=0.4mm, red] (x_{\i}) -- (w);
				}
				
				\foreach \i in {1,4}
				{
					\draw[line width=0.4mm, blue] (x_{\i}) -- (w);
				}
				
				\draw[dashed, line width=0.4mm, red] (x_{6}) .. controls (-5, 6.6) and (5, 6.6) .. (x_{3});
			\end{tikzpicture}
			\caption{$G_7$}
			\label{fig:G7_2}
		\end{subfigure}
		\begin{subfigure}[t]{.3\textwidth}
			\centering
			\begin{tikzpicture}
				[scale=.3]
				\draw(1.5,0) node[circle, draw=black!80, inner sep=0mm, minimum size=3.3mm] (w){\scriptsize $w$};
				\draw(-1.5,0) node[circle, draw=black!80, inner sep=0mm, minimum size=3.3mm] (x){\scriptsize $x$};
				
				\foreach \i/\j in {1,2,3,4,5}
				{
					\draw[rotate=360-72*(\i)] (0,4) node[circle, draw=black!80, inner sep=0mm, minimum size=3.3mm] (x_{\i}){\scriptsize ${\i}$};
				}
				
				\foreach \i/\j in {1/2,2/3,4/5,5/1}
				{
					\draw[line width=0.4mm, blue] (x_{\i}) -- (x_{\j});
				}
				\foreach \i/\j in {3/4}
				{
					\draw[dashed, line width=0.4mm, red] (x_{\i}) -- (x_{\j});
				}
				\draw[line width=0.4mm, blue] (w) -- (x_{5});
				\draw[dashed, line width=0.4mm, red] (w) -- (x);
				
				\draw[line width=0.4mm, blue] (x) -- (x_{4});
				\draw[line width=0.4mm, blue] (x) -- (x_{3});
				
				\draw[line width=0.4mm, blue] (w) -- (x_{2});
				\draw[dashed, line width=0.4mm, red] (w) -- (x_{1});
			\end{tikzpicture}
			\caption{$G'_7$}
			\label{fig:G7}
		\end{subfigure}
		\begin{subfigure}[t]{.3\textwidth}
			\centering
			\begin{tikzpicture}
				[scale=.3]
				\draw(-1,0) node[circle, draw=black!80, inner sep=0mm, minimum size=3.3mm] (x){\scriptsize $x$};
				\draw(1,0) node[circle, draw=black!80, inner sep=0mm, minimum size=3.3mm] (y){\scriptsize $y$};
				
				\foreach \i/\j in {1,2,3,4,5}
				{
					\draw[rotate=360-72*(\i)] (0,4) node[circle, draw=black!80, inner sep=0mm, minimum size=3.3mm] (x_{\i}){\scriptsize ${\i}$};
				}
				
				\foreach \i/\j in {1/2,3/4,4/5,5/1}
				{
					\draw[line width=0.4mm, blue] (x_{\i}) -- (x_{\j});
				}
				\foreach \i/\j in {2/3}
				{
					\draw[dashed, line width=0.4mm, red] (x_{\i}) -- (x_{\j});
				}
				\draw[line width=0.4mm, blue] (x) -- (x_{5});
				\draw[line width=0.4mm, blue] (x) -- (x_{3});
				\draw[dashed, line width=0.4mm, red] (x) -- (x_{4});
				
				\draw[line width=0.4mm, blue] (y) -- (x_{5});
				\draw[line width=0.4mm, blue] (y) -- (x_{2});
				\draw[dashed, line width=0.4mm, red] (y) -- (x_{1});
			\end{tikzpicture}
			\caption{Moser spindle}
			\label{fig:Moser}
		\end{subfigure}

		\begin{subfigure}[t]{.3\textwidth}
			\centering
			\begin{tikzpicture}
				[scale=.3]
				\draw(0,0) node[circle, draw=black!80, inner sep=0mm, minimum size=3.3mm] (w){\scriptsize $w$};
				\draw(-2,0) node[circle, draw=black!80, inner sep=0mm, minimum size=3.3mm] (x){\scriptsize $x$};
				\draw(2,0) node[circle, draw=black!80, inner sep=0mm, minimum size=3.3mm] (y){\scriptsize $y$};
				
				\foreach \i/\j in {1,2,3,4,5}
				{
					\draw[rotate=360-72*(\i)] (0,4.5) node[circle, draw=black!80, inner sep=0mm, minimum size=3.3mm] (x_{\i}){\scriptsize ${\i}$};
				}
				
				\foreach \i/\j in {1/2,2/3,4/5,5/1}
				{
					\draw[line width=0.4mm, blue] (x_{\i}) -- (x_{\j});
				}
				\foreach \i/\j in {3/4}
				{
					\draw[dashed, line width=0.4mm, red] (x_{\i}) -- (x_{\j});
				}
				\draw[line width=0.4mm, blue] (w) -- (y);
				\draw[line width=0.4mm, blue] (w) -- (x_{5});
				\draw[dashed, line width=0.4mm, red] (w) -- (x);
				
				\draw[line width=0.4mm, blue] (x) -- (x_{4});
				\draw[line width=0.4mm, blue] (x) -- (x_{3});
				
				\draw[line width=0.4mm, blue] (y) -- (x_{2});
				\draw[dashed, line width=0.4mm, red] (y) -- (x_{1});
			\end{tikzpicture}
			\caption{$G_8$}
			\label{fig:G8_2}
		\end{subfigure}
		\begin{subfigure}[t]{.3\textwidth}
			\centering
			\begin{tikzpicture}
				[scale=.3]
				\draw(-1,0) node[circle, draw=black!80, inner sep=0mm, minimum size=3.3mm] (x){\scriptsize $x$};
				\draw(1,0) node[circle, draw=black!80, inner sep=0mm, minimum size=3.3mm] (y){\scriptsize $y$};
				
				\foreach \i/\j in {1,2,3,4,5,6}
				{
					\draw[rotate=360-60*(\i)+90] (0,4.5) node[circle, draw=black!80, inner sep=0mm, minimum size=3.3mm] (x_{\i}){\scriptsize ${\i}$};
				}
				
				\foreach \i/\j in {2/3,3/4,4/5,5/6,6/1}
				{
					\draw[line width=0.4mm, blue] (x_{\i}) -- (x_{\j});
				}
				\draw[dashed, line width=0.4mm, red] (x_{1}) -- (x_{2});
				
				\draw[line width=0.4mm, blue] (x) -- (x_{1});
				\draw[line width=0.4mm, blue] (x) -- (x_{5});
				\draw[dashed, line width=0.4mm, red] (x) -- (x_{6});
				
				\draw[line width=0.4mm, blue] (y) -- (x_{2});
				\draw[line width=0.4mm, blue] (y) -- (x_{4});
				\draw[dashed, line width=0.4mm, red] (y) -- (x_{3});
			\end{tikzpicture}
			\caption{$G_8'$}
			\label{fig:G8_3}
		\end{subfigure}
		\begin{subfigure}[t]{.3\textwidth}
			\centering
			\begin{tikzpicture}
				[scale=.3]
				\foreach \i/\j in {1,2,3,4}
				{
					\draw[rotate=360-90*(\i)+45] (0,2.5) node[circle, draw=black!80, inner sep=0mm, minimum size=3.3mm] (x_{\i}){\scriptsize ${\i}$};
				}
				
				\foreach \i/\j in {1,2,3,4}
				{
					\draw[rotate=360-90*(\i)+45] (0,5) node[circle, draw=black!80, inner sep=0mm, minimum size=3.3mm] (y_{\i}){\scriptsize $y_{\i}$};
				}
				
				\foreach \i/\j in {4/1,1/2,2/3}
				{
					\draw  [line width=0.4mm, blue] (x_{\i}) -- (x_{\j});
				}
				\foreach \i/\j in {3/4}
				{
					\draw  [dashed, line width=0.4mm, red] (x_{\i}) -- (x_{\j});
				}
				\foreach \i/\j in {3/4,4/1,1/2}
				{
					\draw  [line width=0.4mm, blue] (y_{\i}) -- (y_{\j});
				}
				\foreach \i/\j in {2/3}
				{
					\draw  [dashed, line width=0.4mm, red] (y_{\i}) -- (y_{\j});
				}
				\foreach \i/\j in {2/2,3/3,4/4}
				{
					\draw  [line width=0.4mm, blue] (x_{\i}) -- (y_{\j});
				}
				\foreach \i/\j in {1/1}
				{
					\draw  [dashed, line width=0.4mm, red] (x_{\i}) -- (y_{\j});
				}
			\end{tikzpicture}
			\caption{Signed cube}
			\label{fig:SignedCube}
		\end{subfigure}
		\caption{The class $\mathcal{P}^*(3)$}
		\label{fig:P*3}
	\end{figure}

	\begin{theorem}
		The class $\mathcal{P}^*(3)$ consists of ten signed graphs, depicted in Figure~\ref{fig:P*3}.
	\end{theorem}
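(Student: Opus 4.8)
The plan is to use Theorem~\ref{thm:NoPositiveFace} to pin down the shape of every member of $\mathcal{P}^*(3)$, reduce the classification to a finite list of candidate plane graphs via Euler's formula, and then check each candidate individually. First I would observe, using Theorem~\ref{thm:NoPositiveFace} and the Corollary following it, that any $(G,\sigma)\in\mathcal{P}^*(3)$ is a simple $2$-connected plane graph with exactly six faces, each of whose facial cycles is negative, and with $\delta(G)\ge 3$. Since a cycle of such a signed graph is negative exactly when it bounds a region containing an odd number of faces, the switching-equivalence criterion of Zaslavsky shows that the signature making all faces negative is unique up to switching, so a member of $\mathcal{P}^*(3)$ is determined by its underlying plane graph. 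Euler's formula gives $|E(G)|=|V(G)|+4$; combined with $2|E(G)|=\sum_v d(v)\ge 3|V(G)|$ this forces $|V(G)|\le 8$, and combined with $2|E(G)|=\sum_F|C_F|\ge 18$ it forces $|V(G)|\ge 5$. Hence $|V(G)|\in\{5,6,7,8\}$ and the degree excess $\sum_v(d(v)-3)=8-|V(G)|$ is at most $3$.

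Next I would translate criticality into the planar dual $G^{*}$, a graph on six vertices with $|E(G^{*})|=|E(G)|$. A set of three edges of $G$ meeting every facial cycle is precisely a perfect matching of $G^{*}$; a short argument (a negative cycle $C$ bounds a region containing an odd number of faces, so no perfect matching of $G^{*}$ can have both ends of each of its edges on the same side of $C$) shows that such a set also meets every negative cycle of $(G,\sigma)$, and conversely every negative-cycle cover has at least three edges, as each edge lies in only two of the six faces. Consequently $\ell(G,\sigma)=3$ if and only if $G^{*}$ has a perfect matching, and by Theorem~\ref{thm:characterizations}, $(G,\sigma)$ is critically $3$-frustrated if and only if every edge of $G^{*}$ lies in a perfect matching of $G^{*}$. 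For non-decomposability I would use that a decomposable critically $3$-frustrated signed graph is $(1,1,1)$- or $(1,2)$-decomposable, so, by Observation~\ref{obs:cycleunion} and the description of $\mathcal{L}(2)$, either $G$ is an edge-disjoint union of three negative cycles or $G$ is an edge-disjoint union of a negative cycle and a $(K_4,-)$-subdivision; hence for each candidate it suffices to rule out these two decompositions.

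The core of the proof is then the enumeration. Because the degree excess is at most $3$, the degree sequences are forced: $(4,4,4,3,3)$ for $|V|=5$; $(5,3,3,3,3,3)$ or $(4,4,3,3,3,3)$ for $|V|=6$; $(4,3,3,3,3,3,3)$ for $|V|=7$; and the cubic sequence for $|V|=8$. For $|V|=5$ the only simple graph is $K_5$ minus an edge, giving $K_5^-$, and for $(5,3,3,3,3,3)$ the only graph is the wheel $W_5$. For the remaining three degree sequences I would list all simple $2$-connected planar graphs with the prescribed numbers of vertices and edges, then keep exactly those that are $3$-connected, have matching-covered dual, and are non-decomposable in the sense above; the survivors are $G_4,G_4'$; the three graphs $G_7,G_7'$ and the Moser spindle; and $G_8,G_8'$ and the cube. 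For each of the ten survivors I would exhibit the unique (up to switching) all-faces-negative signature and verify directly that $\ell=3$, that every edge lies in a minimum signature, and that no decomposition occurs, so that it indeed belongs to $\mathcal{P}^*(3)$. Since the ten underlying graphs are pairwise non-isomorphic and simple with minimum degree $\ge 3$ (hence irreducible, as a simple graph of minimum degree at least $3$ is never a proper subdivision), this yields exactly the ten signed graphs of Figure~\ref{fig:P*3}.

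The step I expect to be the main obstacle is the graph enumeration for $|V|\in\{6,7,8\}$: one must give a provably complete list of the planar graphs with each prescribed degree sequence and, for every graph not appearing in Figure~\ref{fig:P*3}, identify the precise reason it fails---either its dual is not matching-covered (so it is not critically $3$-frustrated), or it decomposes as three edge-disjoint negative cycles, or as a negative cycle plus a $(K_4,-)$-subdivision. A secondary technical point is the handling of plane graphs that are only $2$-connected: one has to verify that every graph in the final list is in fact $3$-connected (so that its facial structure is independent of the embedding), and argue that any graph with a separating pair admitting an all-faces-negative signature is either decomposable or not critically $3$-frustrated.
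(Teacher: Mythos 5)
Your proposal follows essentially the same route as the paper: Theorem~\ref{thm:NoPositiveFace} forces six negative facial cycles, Euler's formula together with $\delta(G)\ge 3$ pins $|V(G)|$ to $\{5,6,7,8\}$ with exactly the degree sequences you list, and the classification reduces to enumerating the corresponding simple planar graphs and verifying criticality for each survivor. Your reformulation of criticality as the planar dual being matching-covered is an equivalent (and somewhat cleaner) phrasing of the paper's final verification that, after deleting any edge, the four remaining negative facial cycles can be covered by two edges; your attention to the embedding-dependence of the face structure for merely $2$-connected candidates addresses a point the paper passes over silently.
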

	
	\begin{proof}
		Let $(G, \sigma)\in \mathcal{P}^*(3)$ with a planar embedding. By Theorem~\ref{thm:NoPositiveFace}, in $(G, \sigma)$ there are six facial cycles all of which are negative. This determines the signature up to a switching. So it remains to classify the underlying graphs $G$.
		Let $n=|V(G)|$, $m=|E(G)|$, and $f=|F(G)|$ where $F(G)$ is the set of facial cycles of $G$. Note that $f=6$ by Theorem~\ref{thm:NoPositiveFace}. By Euler's formula and the fact that $\delta(G)\geq 3$, we have that $n-\frac{3}{2}n+6\geq 2$. Hence, every irreducible non-decomposable critically $3$-frustrated signed planar graph contains at most $8$ vertices. Note that any simple signed graph on at most four vertices has its frustration index at most $2$, thus $n\geq 5$. Depending on the values of $n$ we consider four cases. Noting that in each case $G$ has $6$ faces, the number of edges is determined by Euler's formula.
		
		\begin{itemize}
			\item $n=5,~ m=9$: The underlying graph is $K_5^-$ as it has only one edge less than $K_5$. This graph has a unique planar embedding and in $(G,\sigma)$ all facial cycles must be negative. In Figure~\ref{fig:K_5-} one such signature is presented.
			
			\item $n=6,~ m=10$: Either $G$ consists of one $5$-vertex and four $3$-vertices or it consists of four $3$-vertices and two $4$-vertices. In the first case, $G$ is isomorphic to $W_5$, see Figure~\ref{fig:W5}. In the second case, we consider two subcases: (1) The two $4$-vertices are not adjacent. In this case, these two $4$-vertices are both adjacent to all the remaining vertices, moreover, there are only two edges induced by the four $3$-vertices. See Figure~\ref{fig:2K_4}. (2) The two $4$-vertices are adjacent. In this case, the two $4$-vertices share at most two common neighbors. Otherwise, a $K_{3,3}$ is forced by just counting degrees, contradicting planarity. The degree conditions then lead to the unique example of Figure~\ref{fig:G4}.
			
			\item $n=7,~ m=11$: $G$ consists of one $4$-vertex and six $3$-vertices. We consider the graph $G_1$ obtained from $G$ by removing the $4$-vertex. Note that $G_1$ consists of two $3$-vertices and four $2$-vertices, and moreover, $G$ is planar and there is a planar embedding such that the four $2$-vertices are in a facial cycle. Then one of the following must be the case for $G_1$: (1) It consists of two $4$-cycles sharing one edge, see Figure~\ref{fig:G7_2}; (2) It consists of one $5$-cycle sharing one edge with a triangle, see Figure~\ref{fig:G7}; (3) It consists of two triangles connected by an edge, see Figure~\ref{fig:Moser}.
			
			\item $n=8,~ m=12$: There is a total of five cubic 2-connected graphs, see for example \cite{BCCS77}. Of these, we have one Wagner graph which is not planar, and one obtained from $K_{3,3}$ by blowing up a vertex to a triangle. The other three form the full list of cubic $2$-connected simple planar graphs on $8$ vertices. They are depicted in Figures~\ref{fig:G8_2}, \ref{fig:G8_3}, and \ref{fig:SignedCube}.
		\end{itemize}
	
	To complete the proof, we need to verify that each signed graph in the list is critically $3$-frustrated. That is to say, removing any edge in any of these signed graphs the remaining subgraph has its frustration index being at most $2$. To see this, we note that each of these ten graphs is $2$-edge-connected and each has only six facial cycles all of which are negative. Thus once an edge is removed, we have five facial cycles, one of which (the new facial cycle) is positive and the other four are negative. It can then be readily verified that in each case these four negative facial cycles can be covered with $2$ edges. By Lemma~\ref{lem:NegativeCycleCover}, we are done.
	\end{proof}

 \section[Constructions]{Constructions of critically $k$-frustrated signed graphs}\label{sec:Examples}

In this section, we first introduce a method to build critically frustrated signed graphs from two given critically frustrated signed graphs, and show that it preserves the property of being non-decomposable and irreducible. Secondly, we build an infinite family of decomposable irreducible critically $3$-frustrated signed graphs. In particular, it implies that the condition of being non-decomposable in Conjectures~\ref{conj:Lk} and ~\ref{conj:Pk} is necessary.

 \subsection{Construction of non-decomposable critically frustrated signed graphs}
   In this subsection, we build signed graphs in $\mathcal{L}^*(k)$ from two given non-decomposable critically frustrated signed graphs, one being $k_1$-frustrated and the other being $k_2$-frustrated such that $k=k_1+k_2-1$.

    \begin{definition}
    	Let $(G_1, \sigma_1)$ and $(G_2, \sigma_2)$ be two signed graphs, and let $xy$ be a negative edge of $(G_1, \sigma_1)$ and $uv$ be a negative edge of $(G_2, \sigma_2)$. We define  $H[(G_1, \sigma_1)_{xy}, ~(G_2, \sigma_2)_{uv}]$ to be the signed graph obtained from disjoint union of $(G_1, \sigma_1)$ and $(G_2, \sigma_2)$ by deleting edges $xy$ and $uv$, and then adding a negative edge $xu$ and a positive edge $yv$. 
    \end{definition}

    \begin{proposition}
    Given integers $k_1, k_2 \geq 2$, let $(G_1, \sigma_1)\in \mathcal{L}^*(k_1)$ and $(G_2, \sigma_2)\in \mathcal{L}^*(k_2)$ be two signed graphs such that $|E^-_{\sigma_1}|=k_1$ and $|E^-_{\sigma_2}|=k_2$. Let $xy$ be a negative edge of $(G_1, \sigma_1)$ and $uv$ be a negative edge of $(G_2, \sigma_2)$. Then $H[(G_1, \sigma_1)_{xy}, ~(G_2, \sigma_2)_{uv}] \in \mathcal{L}^*(k_1+k_2-1)$.
    \end{proposition}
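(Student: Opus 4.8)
The plan is to verify, for $H := H[(G_1,\sigma_1)_{xy},\,(G_2,\sigma_2)_{uv}]$ equipped with the signature $\sigma$ it carries by definition (so $\sigma$ agrees with $\sigma_1$ on $E(G_1)\setminus\{xy\}$, agrees with $\sigma_2$ on $E(G_2)\setminus\{uv\}$, is negative on $xu$ and positive on $yv$), four assertions: (a) $\ell(H,\sigma)=k_1+k_2-1$; (b) $(H,\sigma)$ is critically $(k_1+k_2-1)$-frustrated; (c) $(H,\sigma)$ is irreducible; (d) $(H,\sigma)$ is non-decomposable. Throughout I use two structural facts. First, $\{xu,yv\}$ is an edge-cut of $H$ with $V(G_1)$ on one side and $V(G_2)$ on the other, so $H-\{xu,yv\}$ is the disjoint union of $(G_1-xy,\sigma_1)$ and $(G_2-uv,\sigma_2)$, and every cycle of $H$ either lies in the $G_1$-part, lies in the $G_2$-part, or consists of an $x$--$y$ path $P_1$ in $G_1-xy$, the edge $yv$, a $u$--$v$ path $P_2$ in $G_2-uv$, and the edge $xu$; such a crossing cycle is negative exactly when $\sigma_1(P_1)\sigma_2(P_2)=+$. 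Second, $G_1$ and $G_2$ are connected and bridgeless (being non-decomposable and critically frustrated), hence $G_1-xy$, $G_2-uv$, and $H$ are connected. The upper bound in (a) is immediate, since $|E^-_{\sigma}|=(k_1-1)+(k_2-1)+1=k_1+k_2-1$.

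For the lower bound in (a) I argue with minimum negative-cycle covers (Lemma~\ref{lem:NegativeCycleCover}); note $\ell(G_1-xy,\sigma_1)=k_1-1$ and $\ell(G_2-uv,\sigma_2)=k_2-1$ by criticality of the $(G_i,\sigma_i)$. If $B$ is a negative-cycle cover of $(H,\sigma)$, then $B\cap E(G_1-xy)$ covers all negative cycles of $(G_1-xy,\sigma_1)$, so $|B\cap E(G_1-xy)|\ge k_1-1$, and likewise $|B\cap E(G_2-uv)|\ge k_2-1$; hence $|B|\ge k_1+k_2-2$, with equality only if $B$ avoids $\{xu,yv\}$ and $B_1:=B\cap E(G_1-xy)$, $B_2:=B\cap E(G_2-uv)$ are minimum covers of the two factors. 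In that case $B_1$ is the negative-edge set of a minimum signature $\pi$ of $(G_1-xy,\sigma_1)$ that extends to a minimum signature of $(G_1,\sigma_1)$ necessarily negative on $xy$; so $B_1$ cannot separate $x$ from $y$ in $G_1-xy$ (else $B_1$ would cover $(G_1,\sigma_1)$ with only $k_1-1$ edges), and $x,y$ lie on the same side of the switch taking $\sigma_1$ to $\pi$, whence a $B_1$-avoiding $x$--$y$ path $P_1$ satisfies $\sigma_1(P_1)=\pi(P_1)=+$, and symmetrically some $B_2$-avoiding $u$--$v$ path $P_2$ has $\sigma_2(P_2)=+$. The crossing cycle through $P_1$ and $P_2$ is then a negative cycle avoiding $B$, a contradiction. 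Hence $\ell(H,\sigma)=k_1+k_2-1$ and $\sigma$ is a minimum signature.

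For (b) I use Theorem~\ref{thm:characterizations}(2): for each edge $e$ of $H$ I produce a minimum signature of $(H,\sigma)$ negative on $e$. For $e=xu$ take $\sigma$; for $e=yv$ switch $\sigma$ at $V(G_2)$ (this flips precisely $xu$ and $yv$); for $e\in E(G_1-xy)$ with $\sigma_1(e)=-$ take $\sigma$. For $e\in E(G_1-xy)$ with $\sigma_1(e)=+$, criticality of $(G_1,\sigma_1)$ yields a minimum signature $\tau_1\sim\sigma_1$ with $\tau_1(e)=-$; transplant $\tau_1$ onto the $G_1$-part and keep $\sigma_2$ on the $G_2$-part, making the new edges $(xu,yv)$ equal to $(\text{neg},\text{pos})$ if $\tau_1(xy)=-$ and $(\text{pos},\text{pos})$ if $\tau_1(xy)=+$. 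Either way the result has exactly $k_1+k_2-1$ negative edges, and by Zaslavsky's theorem it is switching-equivalent to $\sigma$: the $G_i$-part cycles are unchanged, and for a crossing cycle one checks $\tau_1(P_1)=\sigma_1(P_1)$ when $\tau_1(xy)=-$ while $\tau_1(P_1)=-\sigma_1(P_1)$ when $\tau_1(xy)=+$, exactly compensating the altered sign of $xu$; edges of $E(G_2-uv)$ are symmetric. For (c), $H$ has minimum degree at least $3$ since $d_H(x)=d_{G_1}(x)$, $d_H(y)=d_{G_1}(y)$, $d_H(u)=d_{G_2}(u)$, $d_H(v)=d_{G_2}(v)$ and all other vertices keep their degree; and if $(H,\sigma)$ were a proper subdivision it would have a vertex $w$ with exactly two neighbours joined to it by monochromatic multiedges of equal size --- impossible at $x,y,u,v$ (each is incident with a single new edge, so $w$ would have degree $2$) and impossible elsewhere, since such a $w$ would have the same local structure as in $G_1$ or $G_2$, contradicting their irreducibility.

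Finally (d). Suppose $(H,\sigma)$ is decomposable; grouping the parts into two, we get $E(H)=E_1\sqcup E_2$ with $\ell(H,\sigma)=\ell(H[E_1],\sigma)+\ell(H[E_2],\sigma)$, both summands at least $1$, and each $H[E_i]$ a union of critically frustrated signed graphs, hence bridgeless. If $xu\in E_1$ and $yv\in E_2$ then $xu$ lies in no cycle of $H[E_1]$ (a cycle through $xu$ must use $yv$), so $\ell(H[E_1],\sigma)=\ell(H[E_1]-xu,\sigma)$, which equals the sum of the frustration indices of the $G_1$- and $G_2$-sides of $E_1$, and likewise for $E_2$; summing and using superadditivity of $\ell$ over the edge-partitions of $E(G_1-xy)$ and $E(G_2-uv)$ gives $\ell(H,\sigma)\le(k_1-1)+(k_2-1)$, a contradiction. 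So $xu$ and $yv$ lie in the same part, say $E_1$, and $E_2$ is the edge-disjoint union of a subgraph of $G_1-xy$ and a subgraph of $G_2-uv$. One then shows that the $G_1$-side and $G_2$-side of $E_1$, each completed by re-inserting the negative edge $xy$ or $uv$, are themselves critically frustrated; consequently the decomposition of $(H,\sigma)$ restricts to decompositions of $(G_1,\sigma_1)$ and $(G_2,\sigma_2)$, and since these are non-decomposable we must have $E_2=\emptyset$, the desired contradiction. The main obstacle is exactly this last point: proving that, in a hypothetical decomposition, the part carrying the $2$-edge-cut $\{xu,yv\}$, when split along this cut and completed by the virtual edges, yields critically frustrated subgraphs of $G_1$ and $G_2$ --- the delicate sub-statement being that each re-inserted virtual edge is ``critical'', i.e.\ raises the frustration index of its side by exactly one, which I expect to need the criticality of $H[E_1]$ together with the fact that deleting either of $xu,yv$ turns the other into a bridge, plus superadditivity of the frustration index.
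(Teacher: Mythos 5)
Your treatment of minimality and criticality is correct and takes a genuinely different route from the paper: you establish $\ell(H,\sigma)=k_1+k_2-1$ via minimum negative-cycle covers and the construction of a crossing negative cycle, and you verify criticality via characterization (2) of Theorem~\ref{thm:characterizations} by transplanting minimum signatures of the factors and checking cycle signs with Zaslavsky's theorem; the paper instead works throughout with edge-cuts (no cut with more negative than positive edges for minimality, explicit equilibrated cuts for criticality via characterization (3)). Both are sound; the paper's cut arguments are shorter, yours are more self-contained at the level of cycles. The irreducibility argument matches the paper's.

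The genuine gap is in non-decomposability. Your first case ($xu$ and $yv$ in different parts) is fine, since $xu$ would be a bridge of its part. But in the main case, where both new edges lie in the same part $E_1$, you only assert that ``one then shows'' the two sides of $E_1$, completed by the virtual edges $xy$ and $uv$, are themselves critically frustrated, and you explicitly flag this as an unresolved obstacle. That is exactly the step that must be proved, and it is not clear it can be pushed through in the form you propose: there is no a priori reason that re-inserting $xy$ into the $G_1$-side of $E_1$ raises its frustration index by exactly one, nor that the resulting signed graph is critical at every edge. The paper avoids this entirely: it isolates one part $\hat H_2'$ of the decomposition lying inside, say, $G_2$, with frustration index $l_2\le k_2-1$ (since $uv\notin E(\hat H_2')$), lumps everything else into $(H_1,\sigma)$, and then works in the complement $H'=G_2-E(\hat H_2')$. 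Non-decomposability of $(G_2,\sigma_2)$ together with Observation~\ref{obs:cycleunion} forces $\ell(H',\sigma_2)\le k_2-l_2-1$, so some switching $\pi$ of $\sigma_2$ has only $k_2-l_2-1$ negative edges on $H'$; applying the corresponding vertex switching to $(H_1,\sigma)$ and counting negative edges (in two cases according to the sign of $uv$ under $\pi$) produces a signature of $(H_1,\sigma)$ with $k_1+k_2-l_2-2$ negative edges, contradicting $\ell(H_1,\sigma)=k_1+k_2-l_2-1$. You would need either to carry out such a counting argument or to actually prove your ``virtual edge'' claim to complete the proof.
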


    \begin{proof}
        Let $\sigma$ be the signature of $H[(G_1, \sigma_1)_{xy}, ~(G_2, \sigma_2)_{uv}]$ and note that it has $k_1 + k_2 -1$ negative edges. We first verify that $\sigma$ is a minimum signature by showing that there is no edge-cut with more negative edges than positive ones.
        Suppose to the contrary that there exists an edge-cut of $H[(G_1, \sigma_1)_{xy}, ~(G_2, \sigma_2)_{uv}]$ with more negative edges than positive ones. As $\sigma_1$ (resp. $\sigma_2)$ is a minimum signature of $(G_1, \sigma_1)$ (resp. $(G_2, \sigma_2)$), such an edge-cut, say $\partial(X)$, must contain the new negative edge $xu$. The vertices $x$ and $y$ are not separated by $\partial(X)$ because otherwise in the restriction of $\partial(X)$ to $(G_1, \sigma_1)$ we will find a contradiction. Similarly, $u$ and $v$ are not separated by $\partial(X)$. Then $yv$ is also an edge of $\partial(X)$. However, in this case in one of the restrictions of $\partial(X)$ to  $(G_1, \sigma_1)$ and $(G_2, \sigma_2)$ we find a contradiction.

        Next we show that $H[(G_1, \sigma_1)_{xy}, ~(G_2, \sigma_2)_{uv}]$ is critically  frustrated. By Theorem~\ref{thm:characterizations}, it suffices to prove that each positive edge of $H[(G_1, \sigma_1)_{xy}, ~(G_2, \sigma_2)_{uv}]$ belongs to an equilibrated cut.
        For any positive edge $e$ of $E(G_1, \sigma_1)$, the equilibrated cut of $(G_1, \sigma_1)$ containing $e$ is also an equilibrated cut of $H[(G_1, \sigma_1)_{xy}, ~(G_2, \sigma_2)_{uv}]$ by replacing $xy$ with $xu$ if needed. The same argument holds for positive edges of $(G_2, \sigma_2)$. For the new positive edge $yv$, $\partial(V(G_1))$ is the required equilibrated cut. 
        Note that $H[(G_1, \sigma_1)_{xy}, ~(G_2, \sigma_2)_{uv}]$ is irreducible because it has no vertex with exactly two neighbors.

        \smallskip
        It remains to show that $H[(G_1, \sigma_1)_{xy}, ~(G_2, \sigma_2)_{uv}]$ is not decomposable. Assume to the contrary that it is and suppose there is a $(r_1,\dots, r_t)$-decomposition ($r_1+\cdots +r_t=k_1+k_2-1$) into signed subgraphs $\hat{H}'_1,\dots, \hat{H}'_t$. We may furthermore assume that each $\hat{H}'_i$ is connected. Then they must be 2-connected because a critically frustrated signed graph cannot have a bridge. Thus one of the $\hat{H}'_i$'s, say $\hat{H}'_1$, should contain both $xu$ and $yv$. Each of the others then should be a subgraph of either $(G_1, \sigma_1)$ or $(G_2, \sigma_2)$. Without loss of generality, we assume $\hat{H}'_2$ is a subgraph of $(G_2, \sigma_2)$. Let $(H_2,\sigma)=\hat{H}'_2$, and let $(H_1, \sigma)$ be the signed subgraph obtained from putting all other $\hat{H}'_i$'s (that is  $H[(G_1, \sigma_1)_{xy}, ~(G_2, \sigma_2)_{uv}]-(H_2, \sigma)$). This gives us an $(l_1,l_2)$-decomposition where $l_1=k_1+k_2-1-r_2$ and $l_2=r_2$. 
        
        Observe that $l_2\leq k_2-1$, because $uv$ is not an edge of the critically $l_2$-frustrated signed graph $(H_2, \sigma)$ which is a subgraph of the critically $k_2$-frustrated signed graph $(G_2,\sigma_2)$.
        Let $(H', \sigma_2)$ be the signed subgraph of $(G_2, \sigma_2)$ by removing all edges of $H_2$ (recall that $uv$ is a negative edge of this signed subgraph). Observe that $\ell(H', \sigma_2)\leq k_2-l_2 $, but moreover if $\ell(H', \sigma_2)= k_2-l_2 $ then by Observation~\ref{obs:cycleunion} $(G_2, \sigma_2)$ is $(l_2, k_2-l_2)$-decomposable, a contradiction. Thus $\ell(H', \sigma_2)\leq k_2-l_2-1$. Thus there exists a switching-equivalent signature $\pi$ of $\sigma_2$ such that $|E^-_{\pi}(H')|=k_2-l_2-1$. Assume $\pi$ is obtained by switching at a set $X$ of vertices of $G_2$. 
         
        We consider two cases based on whether $uv\in E^-_\pi$. If $uv\in E^-_\pi$, then $X$ contains either both of $u$ and $v$ or none of them. We now consider a switching at the subset $X$ of the vertices of $(H_1, \sigma)$. This switching does not change the signs of the edges in $(G_1, \sigma_1)$ part, thus there remain $k_1-1$ negative edges in this part, noting that $xy$ is not an edge in $E(H_1\cap G_1)$. On $\{xu, yv\}$ there would remain one negative edge. And on $(H'-uv, \pi)$ we have $k_2-l_2-1$ negative edges. Altogether we have $k_1+k_2-l_2-2$ negative edges in this switching of $(H_1, \sigma)$, contradicting the fact that its frustration index is $k_1+k_2-l_2-1$.         
        If $uv\not\in E^-_\pi$, then $X$ contains exactly one of $u$ or $v$, by symmetry of switching on $X$ or $X^{c}$, we may assume $u\in X$. As in the previous case we consider a switching at the subset $X$ of the vertices of $(H_1, \sigma)$. Since $u\in X$ and $v\not\in X$, both $xu$ and $yv$ are positive edges after this switching. A similar calculation as before then counts the number of negative edges in this switched signed graph to be $k_1+k_2-l_2-2$, which leads to the same contradiction.      
    \end{proof}

	\subsection[Infinite]{An infinite family of critically $3$-frustrated signed graphs}

	As mentioned before, the family of critically $2$-frustrated signed graphs consists of $(K_4,-)$-subdivisions and edge-disjoint union of two negative cycles. If we furthermore require that they are irreducible, then there are only three such signed graphs: $(K_{4}, -)$, two disjoint negative loops, and two negative loops on the same vertex. In other words, the set $\mathcal{L}(2)$ of irreducible critically $2$-frustrated signed graphs consists of only three elements even without the added assumption of being non-decomposable. However, that is not the case for critically $k$-frustrated signed graphs for $k\geq 3$. In this subsection, we show the next result.  

    \begin{theorem}\label{thm:Infinite}
    The set $\mathcal{L}(3)$ contains infinitely many irreducible critically $3$-frustrated signed graphs.
    \end{theorem}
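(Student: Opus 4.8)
The plan is to exhibit an explicit family $(G_n,\sigma_n)$, $n\ge n_0$, of pairwise non-isomorphic signed graphs all lying in $\mathcal{L}(3)$; since $\mathcal{L}(3)$ already contains infinitely many members, no member of $\mathcal{L}^*(3)$ is involved, so the family will necessarily be \emph{decomposable} — which is exactly what makes it also witness that ``non-decomposable'' cannot be dropped from Conjectures~\ref{conj:Lk} and \ref{conj:Pk}. Because $\mathcal{L}(3)$ consists of irreducible graphs, each $G_n$ must be simple (or at least free of reducible configurations) with $\delta(G_n)\ge 3$, and since there are only finitely many graphs of each order, $|V(G_n)|$ must grow with $n$. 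Concretely I would take each $G_n$ to be the edge-disjoint union of a fixed critically $2$-frustrated ``core'' $H$ (so $\ell(H)=2$) and a negative cycle $C_n$ of length tending to infinity, routed so that every vertex of $G_n$ has degree at least $3$ — which forces $V(C_n)\subseteq V(H)$ and forces $C_n$ to pass through all degree-$2$ vertices of $H$ — and then define $\sigma_n$ to have exactly three negative edges: two realising a minimum signature of $H$ and one lying on $C_n$. (A variant would replace ``growing cycle'' by ``growing edge multiplicities'', but the same four verification steps apply.)

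\textbf{Verification outline.} (1) \emph{Frustration index.} The edge-disjoint subgraphs $H$ and $C_n$ are $2$-frustrated and $1$-frustrated respectively, so $\ell(G_n)\ge 3$ by the additivity reasoning of Observation~\ref{obs:cycleunion}; since $|E^-_{\sigma_n}|=3$ we get $\ell(G_n)\le 3$, hence $\ell(G_n)=3$. (2) \emph{Irreducibility.} By construction $G_n$ is simple with $\delta(G_n)\ge 3$, and a proper subdivision of a simple signed graph must create a vertex of degree $2$; hence $G_n$ is irreducible. (3) \emph{Decomposability.} The partition $E(G_n)=E(H)\sqcup E(C_n)$ witnesses that $G_n$ is $(2,1)$-decomposable, so $G_n\notin\mathcal{L}^*(3)$ and, if $H$ is chosen planar, $G_n$ also supplies a planar counterexample to the non-decomposable hypothesis of Conjecture~\ref{conj:Pk}. (4) \emph{Criticality.} By Theorem~\ref{thm:characterizations} it suffices to show that every positive edge of $(G_n,\sigma_n)$ lies in an equilibrated edge-cut, or equivalently (using the definition and Lemma~\ref{lem:NegativeCycleCover}) that $\ell(G_n-e)=2$ for every edge $e$. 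One would split the edges into those of $H$ and those of $C_n$: for a positive edge of $H$ one starts from an equilibrated cut of the critically $2$-frustrated graph $H$ and extends it across $G_n$ using the part of $C_n$ on one side; for a positive edge of $C_n$ one exhibits a two-edge negative-cycle cover of $G_n-e$, again patched together from a cover of $H$ and the structure of $C_n-e$; deletions of the three negative edges are handled separately and are easy.

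\textbf{Main obstacle.} Step (4) is the crux, and it is also where the precise shape of the family is forced. Since a minimum signature of $G_n$ has only three negative edges, an equilibrated edge-cut has at most six edges; consequently every positive edge — including those ``far'' from all three negative edges — must still lie in a small cut that sees enough negativity, and in particular no positive edge may be \emph{inessential}: deleting it must not leave behind a negative cycle that can be re-routed through the rest of the graph. The delicate point is exactly to choose the core $H$, the cyclic order in which $C_n$ threads through the degree-$2$ vertices of $H$ (using only ``fresh'' chords so that $H$ and $C_n$ stay edge-disjoint), and the locations of the three negative edges so that this holds simultaneously for \emph{all} positive edges and for \emph{all} $n$. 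I expect the bulk of the write-up to be a case analysis, along the explicit construction, verifying that after deleting any single edge the surviving negative cycles admit a cover of size $2$ — and I expect this requirement to be what ultimately dictates the combinatorics of the family and why it can be made infinite for $k=3$ while $\mathcal{L}(2)$ stays finite.
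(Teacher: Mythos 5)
Your high-level strategy is the same as the paper's: exhibit an explicit infinite family of irreducible, \emph{decomposable}, critically $3$-frustrated signed graphs obtained as an edge-disjoint union of negative cycles / small critical pieces, with all the real work concentrated in verifying criticality via equilibrated cuts. However, as written the proposal has two genuine gaps. First, your one concrete design is internally inconsistent: if the core $H$ is \emph{fixed} and every vertex of $G_n$ must have degree at least $3$, then (as you yourself note) $V(C_n)\subseteq V(H)$; but $C_n$ is a cycle, hence $2$-regular and vertex-simple, so $|E(C_n)|\le |V(H)|$ is bounded and the family is finite. To get infinitely many irreducible examples the \emph{pieces themselves} must grow. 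The paper does this by taking three negative cycles (a digon $xwx$ and two triangles $xyzx$, $wyzw$ inside a doubled $K_4$), subdividing all three, and then \emph{identifying} the $2t$ subdivision points of the $xw$-digon alternately with the subdivision points of $xz$ and of $yw$, so that every new vertex acquires degree $4$ with at least three distinct neighbours. Some such identification/threading device is unavoidable, and your write-up never commits to one.

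Second, step (4) — criticality — is acknowledged as the crux but not actually carried out; ``I expect the bulk of the write-up to be a case analysis'' is a plan, not an argument. In the paper this step is short precisely because the construction is chosen to make the equilibrated cuts visible: edges incident with $y$ or $z$ lie in the vertex-cuts $\partial(y)$, $\partial(z)$, and each remaining edge $uv$ (with $u$ on the subdivided $xz$ and $v$ on the subdivided $yw$) lies in the six-edge equilibrated cut consisting of $uv$, the two edges forming a triangle with it, and the three negative edges. Without fixing the family one cannot name these cuts, so the proof is not complete as it stands. Your observations (1)--(3) (frustration index via Observation~\ref{obs:cycleunion}, irreducibility via minimum degree and distinct neighbours, decomposability showing the necessity of the non-decomposability hypothesis) are correct and match the paper, but they are the easy parts.
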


  By adding a number of negative loops to the signed graphs of $\mathcal{L}(3)$, one gets examples for any $k$ as long as $k\geq 3$.

 \begin{corollary}
     The set $\mathcal{L}(k)$ is infinite for any positive integer $k\geq 3$.
 \end{corollary}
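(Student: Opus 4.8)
The plan is to deduce the Corollary from Theorem~\ref{thm:Infinite} (together with the displayed remark about appending loops), so the real work is to produce an explicit infinite subfamily of $\mathcal{L}(3)$. I would build a $(1,2)$-decomposable family: for every $m\ge 2$ start from a $(K_4,-)$-subdivision $D_m$ obtained from $K_4$ on $\{a,b,c,d\}$ (with the minimum signature whose negative edges are the matching $\{ab,cd\}$) by subdividing $ab$ into a path $a\,s_1\,s_2\cdots s_{2m}\,b$; by the subdivision rule $as_1$ is negative and the remaining path edges are positive, so $D_m$ is a genuine $(K_4,-)$-subdivision, hence $\ell(D_m)=2$ and $D_m$ is critically $2$-frustrated. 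Then glue onto it a negative cycle $B_m$ on the vertex set $\{s_1,\dots,s_{2m}\}$ using only chords $s_is_j$ with $|i-j|\ge 2$ (so that $B_m$ is edge-disjoint from $D_m$ while $V(B_m)\subseteq V(D_m)$), declaring exactly one edge of $B_m$ negative. Set $(G_m,\sigma_m):=D_m\cup B_m$; it carries a signature with $3$ negative edges, and every vertex has degree $3$ (namely $a,b,c,d$) or $4$ (namely each $s_i$). The precise routing of $B_m$ through $\{s_1,\dots,s_{2m}\}$ is a free parameter that will be fixed in the last step.

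\textbf{Routine parts.} First, $\ell(G_m,\sigma_m)=3$: the bound $\le 3$ is witnessed by $\sigma_m$, and for $\ge 3$ note that if $E'$ is a negative-cycle cover of $(G_m,\sigma_m)$ then $E'\cap E(D_m)$ is a negative-cycle cover of $D_m$ and $E'\cap E(B_m)$ is one of $B_m$; these are disjoint, so $|E'|\ge \ell(D_m)+\ell(B_m)=3$ by Lemma~\ref{lem:NegativeCycleCover}. Hence $\sigma_m$ is a minimum signature, and the same argument shows that for \emph{every} minimum signature $\pi$ one has that $E^-_\pi\cap E(D_m)$ is a minimum negative-cycle cover of $D_m$ and $E^-_\pi\cap E(B_m)$ is a single edge of $B_m$. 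Second, $(G_m,\sigma_m)$ is irreducible: no vertex has exactly two neighbours, and each degree-$4$ vertex $s_i$ has four pairwise distinct neighbours, so it is not the vertex produced by subdividing a $t$-multiedge (whose neighbourhood is the union of two parallel classes); thus $G_m$ is not a proper subdivision. Since the $G_m$ have unboundedly many vertices, they are pairwise non-isomorphic, so this yields infinitely many members of $\mathcal{L}(3)$ --- provided each is critically $3$-frustrated.

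\textbf{Criticality --- the crux.} By Theorem~\ref{thm:characterizations}, since $\sigma_m$ is a minimum signature it suffices to show that every positive edge lies in an equilibrated cut. For the $K_4$-edges this is immediate: $\partial(\{a,c\})=\{as_1,ad,bc,cd\}$ and $\partial(\{a,d\})=\{as_1,ac,bd,cd\}$ each have two positive and two negative edges. The delicate edges are the path edges $s_is_{i+1}$ (and $s_{2m}b$) and the positive edges of $B_m$: for $s_is_{i+1}$ the natural cut $\partial(\{s_1,\dots,s_i\})$ meets $D_m$ in exactly $\{as_1,s_is_{i+1}\}$ (one negative, one positive), so it is equilibrated in $G_m$ precisely when $\{s_1,\dots,s_i\}$ is an arc of $B_m$ carrying exactly one negative boundary edge --- and no routing makes every prefix $\{s_1,\dots,s_i\}$ an arc. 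So the routing of $B_m$ must be chosen so that through each edge \emph{some} equilibrated cut is available, combining a cut inside $D_m$ with a cut of $B_m$ whose crossing edges split evenly. Equivalently, in the signature language: for $e\in E(D_m)$ pick a minimum cover $S_1$ of $D_m$ with $e\in S_1$ (available since $D_m$ is critically $2$-frustrated) and a single edge $S_2$ of $B_m$ with $S_1\cup S_2$ still a negative-cycle cover of $G_m$, i.e. with $(G_m-S_1-S_2,\sigma_m)$ balanced; as $D_m-S_1$ and $B_m-S_2$ are separately balanced, the only obstruction is whether a balancing switch of $D_m-S_1$ restricts on $\{s_1,\dots,s_{2m}\}$ to a balancing switch of $B_m-S_2$. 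Guaranteeing this compatibility (for all $e$ simultaneously) is the step I expect to be hard, and it is what forces the specific choice of $B_m$; everything beyond it is a bounded amount of checking, the same few edge-types recurring for every $m$.

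\textbf{The Corollary.} Finally, given $k\ge 3$ and any $(G,\sigma)\in\mathcal{L}(3)$ from the family, add $k-3$ negative loops (each a negative cycle and the unique member of $\mathcal{L}^*(1)$). The result is still irreducible, since a vertex carrying a loop is never a subdivision vertex, and by Observation~\ref{obs:cycleunion} it is critically $k$-frustrated: its frustration index is $3+(k-3)=k$, while deleting a loop, or deleting an edge of the $(G,\sigma)$-part (which drops $\ell(G,\sigma)$ to $2$), drops the total by exactly $1$. Distinct members of the family give distinct members of $\mathcal{L}(k)$, so $\mathcal{L}(k)$ is infinite for every $k\ge 3$.
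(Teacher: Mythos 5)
Your final paragraph is exactly the paper's argument for this corollary: take the infinite family in $\mathcal{L}(3)$ guaranteed by Theorem~\ref{thm:Infinite} and attach $k-3$ negative loops; since a loop's sign is switching-invariant and forms its own block, $\ell$ increases by exactly $k-3$, criticality and irreducibility are preserved, and non-isomorphic inputs give non-isomorphic outputs. That deduction is correct and complete (your citation of Observation~\ref{obs:cycleunion} for the criticality of the augmented graph is not quite the right reference --- that observation is about decomposability --- but the additivity of $\ell$ over the loop-blocks is immediate, so this is cosmetic).

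The bulk of your write-up, however, is an attempt to re-derive Theorem~\ref{thm:Infinite} via a different construction ($(K_4,-)$-subdivision plus a chordal negative cycle through the subdivision vertices), and there you stop at the decisive point: you yourself flag that you cannot yet exhibit, for every positive edge, an equilibrated cut, and that this ``compatibility for all $e$ simultaneously'' is the hard step that would force the routing of $B_m$. Since Theorem~\ref{thm:Infinite} is already stated and available before the corollary, this incomplete detour is unnecessary rather than fatal; but it is worth noting that the paper's family $\hat{G}_t$ is engineered precisely to make that step trivial --- the identifications interleave the subdivision points of the $xw$-multiedge with those of $xz$ and $yw$ so that each non-negative edge $uv$ lies in an explicit $6$-edge equilibrated cut (the two triangle edges through $uv$ plus the three negative edges), which is exactly the verification your construction leaves open.
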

 
In order to prove our statements, we first define a sequence of signed graphs as follows: Let $\hat{G}_0$ be the signed graph obtained from $K_4$ on vertices $x, y,z,w$ by first assigning negative signs to $xw$ and $yz$, positive signs to the remaining four edges, and secondly adding a positive edge $xw$ and a negative edge $yz$. See Figure~\ref{fig:father}. Observe that $\hat{G}_0$ can be decomposed into three negative cycles: $xwx$ (2-cycle), $xyzx$ (3-cycle), and $wyzw$ (3-cycle). 
	
	\begin{figure}[ht]
		\centering
       \begin{minipage}[t]{.32\textwidth}
       \centering
		\begin{tikzpicture}[scale=.6]
			\draw (0,0) node[circle, draw=black!80, inner sep=0mm, minimum size=3.5mm, fill=white] (w){\scriptsize $w$};
			\draw (0,4) node[circle, draw=black!80, inner sep=0mm, minimum size=3.5mm, fill=white] (x){\scriptsize $x$};
			\draw (-3.5,-2) node[circle, draw=black!80, inner sep=0mm, minimum size=3.5mm, fill=white] (y){\scriptsize $y$};
			\draw (3.5,-2) node[circle, draw=black!80, inner sep=0mm, minimum size=3.5mm, fill=white] (z){\scriptsize $z$};
			
			\begin{pgfonlayer}{bg}
				\draw[line width=0.4mm, blue] (x) -- (y);				
				\draw[line width=0.4mm, blue, name path=bello] (x) -- (z);				
				\draw[line width=0.4mm, blue] (w) -- (y);
				\draw[line width=0.4mm, blue] (w) -- (z);	
				\draw[dash pattern=on 1mm off .7mm,, line width=0.4mm, red] (y) edge[bend left=15] (z);
				\draw[dash pattern=on 1mm off .7mm,, line width=0.4mm, red] (y) edge[bend left=-15] (z);
				\draw[line width=0.4mm, blue] (x) edge[bend left=15] (w);
				\draw[dash pattern=on 1mm off .7mm,, line width=0.4mm, red] (x) edge[bend left=-15] (w);
			\end{pgfonlayer}
			
		\end{tikzpicture}
		\caption{ $\hat{G}_0$.}
		\label{fig:father}
        \end{minipage}
        \begin{minipage}[t]{.32\textwidth}
            \centering
        \begin{tikzpicture}[scale=.6]
			\draw (0,0) node[circle, draw=black!80, inner sep=0mm, minimum size=3.5mm, fill=white] (w){\scriptsize $w$};
			\draw (0,4) node[circle, draw=black!80, inner sep=0mm, minimum size=3.5mm, fill=white] (x){\scriptsize $x$};
			\draw (-3.5,-2) node[circle, draw=black!80, inner sep=0mm, minimum size=3.5mm, fill=white] (y){\scriptsize $y$};
			\draw (3.5,-2) node[circle, draw=black!80, inner sep=0mm, minimum size=3.5mm, fill=white] (z){\scriptsize $z$};
			
			\begin{pgfonlayer}{bg}
				\draw[line width=0.4mm, blue] (x) -- (y);				
				\draw[line width=0.4mm, blue, name path=bello] (x) -- (z);				
				\draw[line width=0.4mm, blue, name path=bello2] (w) -- (y);
				\draw[line width=0.4mm, blue] (w) -- (z);										
				\draw[dash pattern=on 1mm off .7mm,, line width=0.4mm, red] (x) -- (w);
				\draw[dash pattern=on 1mm off .7mm,, line width=0.4mm, red] (y) edge[bend left=15] (z);
				\draw[dash pattern=on 1mm off .7mm,, line width=0.4mm, red] (y) edge[bend left=-15] (z);
			\end{pgfonlayer}
			
			\path[name path=h1] (w)-- (1,5);
			\path[name intersections={of=h1 and bello,by={i1}}];
			\path[name path=h2] (w)-- (2,5);
			\path[name intersections={of=h2 and bello,by={i2}}];
			\path[name path=h3] (w)-- (4.5,4.5);
			\path[name intersections={of=h3 and bello,by={i3}}];
			
			\path[name path=j1] (x)-- (-3,-2);
			\path[name intersections={of=j1 and bello2,by={i4}}];
			\path[name path=j2] (x)-- (-2.5,-2);
			\path[name intersections={of=j2 and bello2,by={i5}}];
			\path[name path=j3] (x)-- (-1.5,-2);
			\path[name intersections={of=j3 and bello2,by={i6}}];
			
			\draw[rotate=0] (i1)  node[circle,draw=black!80, inner sep=0mm, minimum size=3.2mm, fill=white] (v2){};
			\draw[rotate=0] (i3)  node[circle,draw=black!80, inner sep=0mm, minimum size=3.2mm, fill=white] (v4){};
			\draw[rotate=0] (i5)  node[circle,draw=black!80, inner sep=0mm, minimum size=3.2mm, fill=white] (v1){};
			\draw[rotate=0] (i6)  node[circle,draw=black!80, inner sep=0mm, minimum size=3.2mm, fill=white] (v3){};
			\foreach \i/\j in {1/2,2/3,3/4} {
				\draw[line width=0.4mm, blue] (v\i) edge[bend left=7.6] (v\j);		
			};
			\draw[line width=0.4mm, blue] (v1) -- (x);
			\draw[line width=0.4mm, blue] (v4) -- (w);
		\end{tikzpicture}
		\caption{$\hat{G}_2$}
		\label{fig:G2sigma2}
        \end{minipage}
        \begin{minipage}[t]{.32\textwidth}
       \centering
		\begin{tikzpicture}[scale=.5]
			\draw (0,-2) node[circle, draw=black!80, inner sep=0mm, minimum size=3.2mm, fill=white] (v3){};
			\draw (0,4) node[circle, draw=black!80, inner sep=0mm, minimum size=3.5mm, fill=white] (x){\scriptsize $x$};
			\draw (-3.5,-2) node[circle, draw=black!80, inner sep=0mm, minimum size=3.5mm, fill=white] (y){\scriptsize $y$};
			\draw (5,-2) node[circle, draw=black!80, inner sep=0mm, minimum size=3.5mm, fill=white] (w){\scriptsize $w$};
			\draw (4.3,-4) node[circle, draw=black!80, inner sep=0mm, minimum size=3.5mm, fill=white] (z){\scriptsize $z$};
			\draw (-2,-2) node[circle, draw=black!80, inner sep=0mm, minimum size=3.2mm, fill=white] (v1){};
			
			\begin{pgfonlayer}{bg}
				\draw[line width=0.4mm, blue] (x) -- (y);	
				\draw[line width=0.4mm, blue] (x) -- (v1);							
				\draw[line width=0.4mm, blue, name path=bello] (x) -- (z);				
				\draw[line width=0.4mm, blue, name path=bello2] (w) -- (y);
				\draw[line width=0.4mm, blue] (w) -- (z);										
				\draw[dash pattern=on 1mm off .7mm,, line width=0.4mm, red] (x) edge[bend left=15] (w);
				\draw[dash pattern=on 1mm off .7mm,, line width=0.4mm, red] (y) edge[bend left=-15] (z);
				\draw[dash pattern=on 1mm off .7mm,, line width=0.4mm, red] (y) edge[bend left=-30] (z);
			\end{pgfonlayer}

			\path[name path=h1] (v3)-- (1,5);
			\path[name intersections={of=h1 and bello,by={i1}}];
			\path[name path=h2] (v3)-- (2,5);
			\path[name intersections={of=h2 and bello,by={i2}}];
			\path[name path=h3] (v3)-- (4.5,4.5);
			\path[name intersections={of=h3 and bello,by={i3}}];
			\path[name intersections={of=bello2 and bello,by={i4}}];
			
			\draw[rotate=0] (i1)  node[circle,draw=black!80, inner sep=0mm, minimum size=3.2mm, fill=white] (v2){};
			\draw[rotate=0] (i3)  node[circle,draw=black!80, inner sep=0mm, minimum size=3.2mm, fill=white] (v4){};
			\draw[rotate=0] (i4)  node[circle,draw=black!80, inner sep=0mm, minimum size=3.5mm, fill=white] (v5){\scriptsize $s$};
			
			\foreach \i/\j in {1/2,2/3,3/4} {
				\draw[line width=0.4mm, blue] (v\i) -- (v\j);				
			};
			\draw[line width=0.4mm, blue] (v4) -- (w);

		\end{tikzpicture}
		\caption{$\hat{G}'_2$}
		\label{fig:planarDec}
  \end{minipage}
	\end{figure}
	
	The signed graph $\hat{G}_t$ of the sequence is built from $\hat{G}_0$ as follows. We first introduce $2t$ points by subdividing the positive edge connecting $x$ and $w$, and two sets of $t$ points by subdividing each of $xz$ and $yw$. Then we identify the $2t$ points of the $xw$-path with the $2t$ points, alternating between the points from $xz$ and $wy$. See Figure~\ref{fig:G2sigma2} for the case of $t=2$. 

    \medskip
    \noindent
    \emph{Proof of Theorem~\ref*{thm:Infinite}.}
    We shall prove this claim by showing that $\hat{G}_t\in \mathcal{L}(3)$.
    Observe that subdivisions of each of the three cycles given in decomposition of $\hat{G}_{0}$ gives a decomposition of $\hat{G}_t$. It implies that $\ell (\hat{G}_t)=3$. What remains is to show that $\hat{G}_t$ is irreducible and critically $3$-frustrated.
	
	That $\hat{G}_t$ is irreducible follows from the fact that in a subdivision of a graph, there is always a vertex that has only two distinct neighbors. But there is no such vertex in $\hat{G}_t$. Now we provide a sketch of the proof of $\hat{G}_t$ being critically $3$-frustrated. First, observe that each edge incident with $y$ (or $z$) is in an equilibrated cut $\partial(y)$ (respectively, $\partial(z)$). All other edges are the results of subdivisions (and then identifying some vertices). For an edge $uv$ where $u$ is a vertex on the subdivision of $xz$ and $v$ is a vertex on the subdivision of $yw$, the following six edges form an equilibrated cut: $uv$, the edge on the $xz$-path that forms a triangle with $uv$, the edge on the $yw$-path that forms a triangle with $uv$ and the three negative edges.
    \hfill$\Box$
	
	\smallskip
    In fact, we can modify these signed graphs to get an infinite family of irreducible critically $3$-frustrated signed planar graphs. For each $\hat{G}_t$, we apply the following modification to get $\hat{G}'_t$. First, by modifying the embedding of Figure~\ref{fig:G2sigma2} and putting $w$ on the outside of the $xyz$-triangle, we may have an embedding with one cross which is the crossing of the edge of the $yw$-path incident with $w$ and the edge of the $xz$-path incident with $z$. Then introduce a new vertex, say $s$ at this crossing point to get the planar signed graph $\hat{G}'_t$. See Figure~\ref{fig:planarDec} for a depiction of $\hat{G}'_2$. The only remaining point to verify is that each of the new edges is in an equilibrated cut. Such two cuts are $\partial(\{w,z\})$ and $\partial(\{w,z,s\})$.
    Therefore, we obtain the following result for planar graphs.

    \begin{theorem}
    There exist infinitely many irreducible critically $k$-frustrated planar signed graphs for $k\geq 3$.
    \end{theorem}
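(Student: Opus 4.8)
The plan is to settle the case $k=3$ using the family $\hat{G}'_t$, $t\ge 1$, already constructed above, and then to reduce every larger $k$ to this case by attaching negative loops. For $k=3$ most of the work is in place: by construction each $\hat{G}'_t$ is planar, and subdividing the three negative cycles appearing in the decomposition of $\hat{G}_0$ (and then performing the identifications and inserting the vertex $s$) yields a decomposition of $\hat{G}'_t$ into three negative cycles, so $\ell(\hat{G}'_t)=3$. It is irreducible because no vertex of it has exactly two distinct neighbors — every vertex has degree at least $3$, and the identified path vertices, together with $s$, each have four distinct neighbors — so, as for $\hat{G}_t$, it is not a proper subdivision of any signed graph. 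For criticality I would apply Theorem~\ref{thm:characterizations}(3): it suffices to put every positive edge into an equilibrated cut, and this is exactly what was checked before the statement, the edges at $y$ and $z$ lying in $\partial(y)$ and $\partial(z)$, the \emph{rungs} joining the subdivided $xz$-path to the subdivided $yw$-path lying in the six-edge cuts described there, and the edges created at $s$ lying in $\partial(\{w,z\})$ and $\partial(\{w,z,s\})$. Finally $|V(\hat{G}'_t)|$ is strictly increasing in $t$, so the $\hat{G}'_t$ are pairwise non-isomorphic and there are infinitely many of them.

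For $k>3$ I would, for each $t\ge 1$, add $k-3$ negative loops at a fixed vertex of $\hat{G}'_t$, say at $x$, drawing them inside a face incident with $x$ so that planarity is preserved. The resulting signed graph is still irreducible, since it still has no vertex with exactly two distinct neighbors. Each of the $k-3$ loops is a negative cycle, pairwise edge-disjoint and also edge-disjoint from $\hat{G}'_t$, and no cycle other than a loop uses a loop edge; hence every negative-cycle cover must use each loop edge separately and at least $3$ further edges inside $\hat{G}'_t$, so $\ell = 3+(k-3)=k$ (cf. Observation~\ref{obs:cycleunion}). Criticality is inherited: deleting one loop drops the frustration index to $k-1$, while deleting an edge $e$ of $\hat{G}'_t$ gives frustration index $\ell(\hat{G}'_t-e)+(k-3)=2+(k-3)=k-1$ because $\hat{G}'_t$ is critically $3$-frustrated. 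Varying $t$ again produces infinitely many pairwise non-isomorphic examples, which proves the statement.

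I expect the one genuinely delicate point — already folded into the construction — to be the criticality check for $\hat{G}'_t$, and specifically the verification that inserting the crossing vertex $s$ does not destroy the property that every positive edge lies in an equilibrated cut; $s$ has degree four and is the one place where one must argue carefully that $\partial(\{w,z\})$ and $\partial(\{w,z,s\})$ together cover all of its incident positive edges. Everything else — planarity of $\hat{G}'_t$ and of the loop-augmented graphs, the three-negative-cycle decomposition witnessing $\ell=3$, irreducibility via the degree argument, and the bookkeeping for the added loops — follows the template already established for $\hat{G}_t$ and is routine.
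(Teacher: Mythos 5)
Your proposal is correct and follows essentially the same route as the paper: the planar family $\hat{G}'_t$ with the crossing vertex $s$ and the equilibrated cuts $\partial(\{w,z\})$ and $\partial(\{w,z,s\})$ settles $k=3$, and the paper likewise handles $k>3$ by attaching negative loops (which preserve planarity, irreducibility, and add one to the frustration index each). You merely spell out the loop bookkeeping more explicitly than the paper does, which is a reasonable elaboration rather than a different method.
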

	
	We remark that even though the classes $\mathcal{S}^*(3)$ and $\mathcal{P}^*(3)$ are fully described in this work, the full description of the class $\mathcal{L}^*(3)$ is far from clear. In particular, $\mathcal{L}^*(3)\setminus (\mathcal{S}^*(3)\cup \mathcal{P}^*(3))\neq \emptyset$. Two examples of such signed graphs are given in Figure~\ref{fig:K5minor}. The class $\mathcal{L}^*(3)$ is shown to contain finitely many elements in forthcoming work.

	\begin{figure}[htbp]
		\centering
		\begin{minipage}[t]{.4\textwidth}
			\centering
			\begin{tikzpicture}[scale=.34]
				\foreach \i/\j in {1,2,3,4,5}
				{
					\draw[rotate=360-72*(\i)] (0,5) node[circle, draw=black!80, inner sep=0mm, minimum size=3.3mm] (x_{\i}){};
				}
				
				\foreach \i/\j in {1,2,3,4,5}
				{
					\draw[rotate=360-72*(\i)] (0,3) node[circle, draw=black!80, inner sep=0mm, minimum size=3.3mm] (y_{\i}){};
				}
				
				\foreach \i/\j in {1/2,3/4,4/5,5/1}
				{
					\draw  [line width=0.4mm, blue] (x_{\i}) -- (x_{\j});
				} 
				
				\foreach \i/\j in {1/3,3/5,5/2,2/4}
				{
					\draw  [line width=0.4mm, blue] (y_{\i}) -- (y_{\j});
				} 
				
				\foreach \i/\j in {2/3}
				{
					\draw  [dashed, line width=0.4mm, red] (x_{\i}) -- (x_{\j});
				}
				
				\foreach \i/\j in {1/4}
				{
					\draw  [dashed, line width=0.4mm, red] (y_{\i}) -- (y_{\j});
				}
				
				\foreach \i in {1,2,3,4}
				{
					\draw  [line width=0.4mm, blue] (x_{\i}) -- (y_{\i});
				}
				
				\draw[dashed, line width=0.4mm, red] (x_{5}) -- (y_{5});
			\end{tikzpicture}
		\end{minipage}
		\begin{minipage}[t]{.4\textwidth}
			\centering
			\begin{tikzpicture}
				[scale=.26]
				\foreach \i/\j in {1/1,2/3,3/5,4/7}
				{
					\draw(6*\i, 0) node[circle, draw=black!80, inner sep=0mm, minimum size=3.3mm] (x\j){};
				}
				
				\foreach \i/\j in {1/2,2/4,3/6}
				{
					\draw(6*\i, -12) node[circle, draw=black!80, inner sep=0mm, minimum size=3.3mm] (x\j){};
				}
				
				\foreach \i/\j in {1/4,1/6,3/2,3/6,5/2,5/4,7/2,7/4,7/6}
				{
					\draw  [bend left=18, line width=0.4mm, blue] (x\i) -- (x\j);
				}
				
				\foreach \i/\j in {1/2,3/4,5/6}
				{
					\draw  [bend left=18, dashed, line width=0.4mm, red] (x\i) -- (x\j);
				}	
			\end{tikzpicture}
		\end{minipage}
		\caption{Examples in $\mathcal{L}^*(3)$ neither in $\mathcal{S}^*(3)$ nor in $\mathcal{P}^*(3)$}
		\label{fig:K5minor}
	\end{figure}
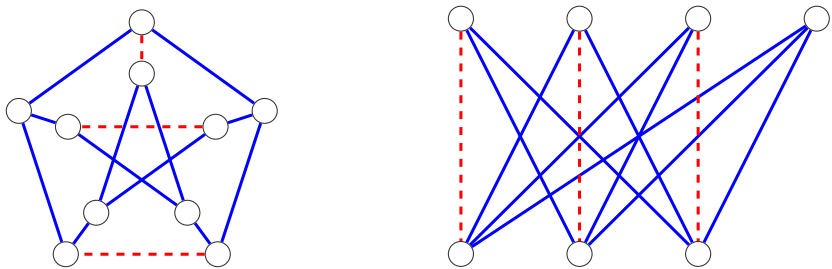

	\section{A support for Conjecture~\ref{conj:degree2k}}\label{sec:degree}
	
	In this section, we show that Conjecture~\ref{conj:degree2k} holds if we add the extra condition that $(G,\sigma)$ has no $(K_5,-)$-minor.
	A signed graph $(H, \pi)$ is a \emph{minor} of $(G,\sigma)$ if it is obtained from $(G, \sigma)$ by a sequence of the following operations: Deleting vertices or edges, contraction of positive edges, and switching.
	
	Our claim concludes from some known results on the frustration index of $(K_5,-)$-minor-free signed graphs.
		
	\begin{theorem}\label{Schrijver2}{\rm \cite{S03}}
		Let $(G, \sigma)$ be an Eulerian signed graph without $(K_5, -)$-minor. Then the
		maximum number of edge-disjoint negative cycles of $(G, \sigma)$ is equal to its frustration index.
	\end{theorem}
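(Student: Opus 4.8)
The plan is to reduce the statement to a ``packing equals covering'' min-max theorem and then invoke the appropriate structural machinery; throughout write $\nu(G,\sigma)$ for the maximum number of edge-disjoint negative cycles, so that the claim is $\nu(G,\sigma)=\ell(G,\sigma)$. One inequality is free: any negative-cycle cover meets each cycle of a packing, and these cycles are pairwise edge-disjoint, so $\ell(G,\sigma)\ge\nu(G,\sigma)$ holds for every signed graph. The whole content is therefore the reverse inequality under the two hypotheses.

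First I would reformulate both quantities over $GF(2)$. By the switching description of the frustration index, $\ell(G,\sigma)=\min_{X\subseteq V(G)}|E^-_\sigma\,\triangle\,\partial(X)|$, i.e.\ $\ell(G,\sigma)$ is the minimum Hamming weight in the coset $E^-_\sigma+\mathcal{C}^*(G)$ of the cut space of $G$, while the negative cycles are exactly the circuits $C$ of $G$ with $|C\cap E^-_\sigma|$ odd. This is precisely the framework of ``odd circuits'' of a binary matroid, encoded by adjoining one new element $e_0$ to $M(G)$ so that the negative cycles become the circuits through $e_0$ of the resulting (even-cycle) lift matroid. In this language the theorem asserts that the clutter of negative cycles is Mengerian in the Eulerian sense — the packing counterpart of Guenin's idealness theorem for signed graphs.

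Next I would carry out the reduction to known input. The role of the hypothesis ``no $(K_5,-)$-minor'' is to guarantee the min-max property: in the matroidal reformulation it says the auxiliary binary matroid (equivalently, the clutter of negative cycles) avoids the forbidden minor corresponding to $(K_5,-)$. Granting this together with the Eulerian hypothesis, the theorem on packing odd circuits in Eulerian signed graphs — due to Geelen and Guenin, and a special case of Seymour's min-max theorem for binary matroids — yields a maximum packing and a minimum cover of equal size; translating the cover back through the reformulation of the previous paragraph produces an equivalent signature with exactly $\nu(G,\sigma)$ negative edges, hence $\ell(G,\sigma)\le\nu(G,\sigma)$.

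If one does not want to quote that machinery as a black box, the concrete route is induction on a Wagner-type decomposition: a $(K_5,-)$-minor-free signed graph is built from planar signed graphs and a bounded list of small exceptional pieces by $1$- and $2$-sums. In the planar base case, planar duality turns edge-disjoint negative cycles of an Eulerian planar $(G,\sigma)$ into edge-disjoint $T$-cuts of the bipartite plane graph $G^*$ (with $T$ the set of odd-degree vertices of the subgraph $E^-_\sigma$ of $G^*$), and a negative-cycle cover into a $T$-join; equality then follows from Seymour's theorem that in a bipartite graph the minimum $T$-join equals the maximum packing of $T$-cuts. The inductive step shows the min-max equality survives the sum operations by splicing a packing and a cover from the two sides along the small cut, checking that parities agree and that the Eulerian condition is maintained at the gluing vertices. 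In either route the main obstacle is exactly the structural dictionary — the ``$(K_5,-)$-minor $\leftrightarrow$ forbidden clutter/matroid minor'' equivalence, or the decomposition theorem for $(K_5,-)$-minor-free signed graphs — while the combinatorial splicing and the careful propagation of the Eulerian hypothesis through minors and sums are the delicate, though comparatively routine, remainder.
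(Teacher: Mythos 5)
The paper gives no proof of this statement: it is imported verbatim from the literature (the citation [S03] is Schrijver's monograph, where this appears as the Geelen--Guenin theorem on packing odd circuits in Eulerian signed graphs with no odd-$K_5$, i.e.\ $(K_5,-)$, minor). Your proposal ultimately does the same thing --- the easy inequality $\ell(G,\sigma)\ge\nu(G,\sigma)$ plus an appeal to Geelen--Guenin --- so it matches the paper's treatment, and your sketch of the surrounding machinery (the binary-clutter reformulation, the planar base case via Seymour's $T$-join/$T$-cut theorem in bipartite graphs) is accurate in outline; the one attribution to flag is that calling the packing theorem ``a special case of Seymour's min-max theorem for binary matroids'' overstates matters, since Geelen--Guenin is a separate and substantially harder result rather than a formal corollary of Seymour's theorem.
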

	
	A set $ \mathcal{C}$ of negative cycles of $(G, \sigma)$ is said to be a \emph{ $\leq_2$-negative cycle cover} if each edge belongs to at most two cycles of $\mathcal{C}$. If each edge belongs to exactly two cycles in $\mathcal{C}$, then we say that $\mathcal{C}$ is a \emph{negative cycle double cover}. 
	
	Let $(G, \sigma)$ be a $(K_5, -)$-minor-free signed graph. Note that, by doubling each edge with the respective sign, we obtain a new $(K_5, -)$-minor-free signed graph which is Eulerian and whose frustration index equals $2 \ell(G, \sigma)$. By applying Theorem~\ref{Schrijver2} to this new signed graph, we obtain the following result.

	\begin{theorem}\label{Schrijver2Cor}
	Let $(G, \sigma)$ be a $(K_5, -)$-minor-free signed graph and let $\mathcal{C}$ be a $\leq_2$-negative cycle cover of $(G, \sigma)$. Then $\ell(G,\sigma)= \frac{1}{2}\,|\mathcal{C}|$. 
	\end{theorem}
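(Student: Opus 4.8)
The plan is to deduce the statement from Schrijver's packing theorem (Theorem~\ref{Schrijver2}) through the doubling operation sketched just before the theorem. Write $(\hat G,\hat\sigma)$ for the signed graph obtained from $(G,\sigma)$ by replacing each edge $e$ by two parallel edges carrying the sign $\sigma(e)$. First I would record three elementary properties of $(\hat G,\hat\sigma)$. (a) It is Eulerian, since every vertex degree is doubled. (b) It has no $(K_5,-)$-minor: every minor operation carried out in $(\hat G,\hat\sigma)$ can be imitated in $(G,\sigma)$ — one may simply delete one copy of a parallel pair, and contracting a positive edge of such a pair turns its sibling into a deletable positive loop — so a $(K_5,-)$-minor of $(\hat G,\hat\sigma)$ would produce one of $(G,\sigma)$. (c) $\ell(\hat G,\hat\sigma)=2\ell(G,\sigma)$: doubling a minimum signature of $(G,\sigma)$ gives ``$\le$'', and for ``$\ge$'' one notes that switching at a vertex flips both copies of each incident edge simultaneously (equivalently, the digon formed by a parallel pair is positive and remains positive, by Zaslavsky's characterisation of switching equivalence), so every signature switching-equivalent to $\hat\sigma$ is the doubling of a signature of $(G,\sigma)$ equivalent to $\sigma$, with exactly twice as many negative edges.

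Next I would transfer $\mathcal C$ to $(\hat G,\hat\sigma)$. For each $C\in\mathcal C$, choose a lift $\hat C$ — the cycle of $\hat G$ obtained by selecting, for every edge of $C$, one of its two copies. Since every edge of $G$ lies on at most two members of $\mathcal C$, these choices can be made (independently, edge by edge) so that the lifts of distinct members of $\mathcal C$ are pairwise edge-disjoint. Each $\hat C$ is negative because it carries precisely the signs of $C$. Hence $\{\hat C: C\in\mathcal C\}$ is a packing of $|\mathcal C|$ pairwise edge-disjoint negative cycles of $(\hat G,\hat\sigma)$, and since a packing never exceeds the frustration index (the basic observation recalled in the introduction) we obtain $|\mathcal C|\le \ell(\hat G,\hat\sigma)=2\ell(G,\sigma)$, i.e.\ $\ell(G,\sigma)\ge\tfrac12|\mathcal C|$.

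For the matching inequality $\ell(G,\sigma)\le\tfrac12|\mathcal C|$ I would invoke the covering property of $\mathcal C$ together with the full strength of Schrijver's theorem. Applying Theorem~\ref{Schrijver2} to $(\hat G,\hat\sigma)$ — which is Eulerian and $(K_5,-)$-minor-free by (a)--(b) — the maximum number of edge-disjoint negative cycles of $(\hat G,\hat\sigma)$ equals $\ell(\hat G,\hat\sigma)=2\ell(G,\sigma)$. Projecting such a maximum packing back to $(G,\sigma)$ (each edge of $G$ absorbing at most the two copies it has in $\hat G$) yields a $\le_2$-family of negative cycles of $G$ of size $2\ell(G,\sigma)$; conversely every $\le_2$-family of $G$ lifts to a packing in $\hat G$, so a $\le_2$-family of $G$ has maximum size exactly $2\ell(G,\sigma)$. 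It then remains to argue that a $\le_2$-negative cycle cover attains this maximum, which is where the ``cover'' hypothesis is used.

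The routine part is the bundle (a)--(c) of properties of the doubling, which should be purely mechanical. The step that genuinely exploits the hypotheses — and the one I would treat most carefully — is the second inequality: it is simultaneously the place where $(K_5,-)$-minor-freeness is indispensable (so that Schrijver's theorem gives the \emph{equality} $\nu=\ell$ in $(\hat G,\hat\sigma)$, not merely $\nu\le\ell$) and where the precise content of ``$\le_2$-negative cycle cover'' must be pinned down, namely that such a family is large enough to realise the maximum packing of the doubled graph. Once that is settled, combining it with the first inequality gives $\ell(G,\sigma)=\tfrac12|\mathcal C|$.
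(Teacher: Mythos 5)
Your construction is the same as the paper's: double every edge with its sign, check that the doubled signed graph is Eulerian, $(K_5,-)$-minor-free, and has frustration index $2\ell(G,\sigma)$, and then apply Theorem~\ref{Schrijver2}. Your items (a)--(c) are correct, and the lifting of $\mathcal{C}$ to $|\mathcal{C}|$ pairwise edge-disjoint negative cycles of the doubled graph (assigning the two copies of an edge to the at most two members of $\mathcal{C}$ through it) correctly yields $|\mathcal{C}|\le \ell(\hat G,\hat\sigma)=2\ell(G,\sigma)$; conversely, projecting a maximum packing of the doubled graph back to $G$ produces a $\leq_2$-family of negative cycles of cardinality exactly $2\ell(G,\sigma)$. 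This is already everything contained in the paper's own (one-sentence) justification.

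The step you defer to the end, however, cannot be completed, because the hypothesis you hope to invoke does not exist: the paper's definition of a ``$\leq_2$-negative cycle cover'' imposes only the packing condition that each edge lies on at most two members of $\mathcal{C}$ --- there is no covering or maximality requirement hidden in the word ``cover''. As literally stated the theorem is false (a single negative cycle in a signed graph with $\ell\ge 2$ is a $\leq_2$-family of size $1$). What your two halves actually prove --- and what the paper actually uses later, both as the existence of a $\leq_2$-family of cardinality $2k$ and as the bound that every $\leq_2$-family of $(G-e,\sigma)$ has at most $2k-2$ members --- is that the \emph{maximum} cardinality of a $\leq_2$-family of negative cycles equals $2\ell(G,\sigma)$. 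So the missing second inequality is a defect of the statement rather than of your reasoning: once $\mathcal{C}$ is assumed to be maximum (or the conclusion is rephrased as $\max|\mathcal{C}|=2\ell(G,\sigma)$), the two inequalities you establish combine into a complete proof.
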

	
	We use the notion of critically frustrated signed graphs to  obtain the following strengthening. 
	
	\begin{theorem}
		Every $(K_5, -)$-minor-free critically $k$-frustrated signed graph has a negative cycle double cover of order $2k$. 
	\end{theorem}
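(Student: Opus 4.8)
The plan is not to build a negative cycle double cover directly, but to take a \emph{maximum} family of negative cycles in which no edge is used more than twice (a $\le_2$-negative cycle cover) and then show that criticality forces this family to use every edge exactly twice and to have exactly $2k$ members.

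First I would record the two quantitative statements underlying Theorem~\ref{Schrijver2Cor}. Let $(G,\sigma)$ be $(K_5,-)$-minor-free with $\ell(G,\sigma)=\ell$. Doubling every edge (keeping its sign) produces the Eulerian $(K_5,-)$-minor-free signed graph $2(G,\sigma)$ of frustration index $2\ell$, so by Theorem~\ref{Schrijver2} it contains $2\ell$ pairwise edge-disjoint negative cycles. None of these can be a $2$-cycle on two parallel copies of a single edge, since such a cycle has two equally signed edges and is positive; hence each of them uses at most one copy of each edge of $G$ and therefore projects to a genuine negative cycle of $(G,\sigma)$. Edge-disjointness upstairs, together with the fact that each edge of $G$ has only two copies, makes the resulting family $\mathcal{C}$ (a family, possibly with repetitions) a $\le_2$-negative cycle cover of order $2\ell$. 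Conversely, any $\le_2$-negative cycle cover of a $(K_5,-)$-minor-free signed graph lifts back — route the (at most two) cycles through an edge $e$ along the two copies of $e$ — to that many edge-disjoint negative cycles of the doubled graph, so its order is at most $2\ell$. Thus $2\ell$ is exactly the largest possible order of a $\le_2$-negative cycle cover; this is Theorem~\ref{Schrijver2Cor} in usable form.

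Then, with $(G,\sigma)$ critically $k$-frustrated and $(K_5,-)$-minor-free (so $\ell(G,\sigma)=k$), I would fix a $\le_2$-negative cycle cover $\mathcal{C}$ of order $2k$ from the previous step. For an edge $e$, let $m_e\in\{0,1,2\}$ be the number of members of $\mathcal{C}$ through $e$. Deleting exactly those members leaves a family $\mathcal{C}_e$ of $2k-m_e$ negative cycles, each of which lives in $(G-e,\sigma)$ and still uses no edge more than twice, so $\mathcal{C}_e$ is a $\le_2$-negative cycle cover of $(G-e,\sigma)$. Since $G-e$ is a subgraph of $G$ it has no $(K_5,-)$-minor, and by criticality $\ell(G-e,\sigma)=k-1$; the maximality bound applied to $(G-e,\sigma)$ then gives $2k-m_e=|\mathcal{C}_e|\le 2(k-1)$, i.e.\ $m_e\ge 2$, hence $m_e=2$. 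Therefore every edge lies in exactly two members of $\mathcal{C}$, which is precisely to say that $\mathcal{C}$ is a negative cycle double cover, and it has order $2k$.

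The only delicate point is the reduction to the Eulerian doubled graph in the first step: one has to verify that Schrijver's edge-disjoint negative cycles cannot collapse to parallel digons (they cannot, being negative) and hence descend to honest cycles of $(G,\sigma)$, and one must allow $\mathcal{C}$ to be a family rather than a strict set — a cycle gets repeated precisely when an edge is routed once along each of its two copies, which is forced, for instance, by a negative loop. Once Theorem~\ref{Schrijver2} is phrased as the statement that a maximum $\le_2$-negative cycle cover has order exactly $2\ell$, the remainder is the short averaging-over-$(G-e)$ argument above, and no further structure theory for $(K_5,-)$-minor-free signed graphs is required.
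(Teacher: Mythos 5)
Your proof is correct and follows essentially the same route as the paper: take a maximum $\le_2$-negative cycle cover of order $2k$ and use criticality of $(G,\sigma)$ together with the bound $2\ell(G-e,\sigma)=2k-2$ on covers of $G-e$ to force every edge into exactly two members. The only differences are cosmetic — you merge the paper's two cases ($m_e=0$ and $m_e=1$) into the single inequality $2k-m_e\le 2k-2$, and you spell out the doubling/projection argument behind Theorem~\ref{Schrijver2Cor} (digons being positive, multiset issues) more carefully than the paper does.
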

	
	\begin{proof}
		Let $(G, \sigma)$ be a $(K_5, -)$-minor-free signed graph and assume it is critically $k$-frustrated. By Theorem~\ref{Schrijver2Cor} there exists a $\leq_2$-negative cycle cover $\mathcal{C}$ of cardinality $2k$.
		We prove that $\mathcal{C}$ is indeed a negative cycle double cover. That is to say that each edge of $G$ is in  exactly two cycles of $\mathcal{C}$. Assume to the contrary that an edge $e$ is not in two cycles of $\mathcal{C}$, thus it is either in none of them or only in one of them.
		
		First, consider the case when $e$ does not belong to any cycle of $\mathcal{C}$. Then $\ell(G-e, \sigma)\geq k=\frac{1}{2}|\mathcal{C}|$, contradicting the criticality of $(G, \sigma)$.
		
		Next, suppose that $e$ belongs to exactly one cycle of $\mathcal{C}$. By criticality, $\ell(G-e, \sigma)=2k-1$. Hence, by Theorem~\ref{Schrijver2Cor}, each $\leq_2$-negative cycle cover $\mathcal{C}$ of $\ell(G-e, \sigma)$ is of order at most $2k-2$.
		Since $e$ belongs to exactly one cycle of $C \in \mathcal{C}$, the set $\mathcal{C} \setminus \{C\}$ is a $\leq_2$-negative cycle cover of $(G-e, \sigma)$ with $|\mathcal{C} \setminus \{C\}|=2k-1$, a contradiction.
	\end{proof}
	
	As the edges incident with each vertex $v$ belong to at most $2k$ cycles, the Conjecture~\ref{conj:degree2k} is implied when $(G, \sigma)$ has no $(K_5,-)$-minor. 
	
	\begin{corollary}
        Every $(K_5, -)$-minor-free critically $k$-frustrated signed graph $(G, \sigma)$ satisfies $\Delta(G) \leq 2k$.
	\end{corollary}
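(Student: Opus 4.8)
The plan is to read the bound directly off the negative cycle double cover $\mathcal{C}$ of order $2k$ whose existence is guaranteed by the preceding theorem; nothing heavier should be needed. I would fix an arbitrary vertex $v \in V(G)$ and count the set $P$ of pairs $(e, C)$ with $C \in \mathcal{C}$, $e \in E(C)$, and $v$ an endpoint of $e$. On one hand, since $\mathcal{C}$ is a double cover every edge of $G$ lies in exactly two of its members, so each of the $d_G(v)$ edges incident with $v$ contributes exactly two pairs and $|P| = 2\,d_G(v)$. On the other hand, a cycle is by definition a connected $2$-regular subgraph, so every $C \in \mathcal{C}$ containing $v$ meets $v$ in exactly two edges while every $C$ avoiding $v$ meets it in none; hence $|P| = 2\,|\{ C \in \mathcal{C} : v \in V(C)\}|$. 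Comparing the two expressions yields $d_G(v) = |\{ C \in \mathcal{C} : v \in V(C)\}| \le |\mathcal{C}| = 2k$, and since $v$ was arbitrary, $\Delta(G) \le 2k$.

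The honest assessment is that there is essentially no obstacle left at the level of the corollary: the genuinely nontrivial step---promoting a $\le_2$-negative cycle cover to a negative cycle \emph{double} cover of order exactly $2k$---is precisely the content of the preceding theorem, which rests on Theorem~\ref{Schrijver2Cor} (itself a consequence of Schrijver's packing result, Theorem~\ref{Schrijver2}, applied to the edge-doubled Eulerian graph) together with the criticality hypothesis. What remains is only the elementary incidence count above. The sole bookkeeping detail worth a moment's thought is that the paper allows multigraphs: parallel edges of opposite sign do not occur in the signed graphs under consideration, two parallel edges of the same sign are handled by the identical count, and loops are excluded by the standing convention---so the argument goes through verbatim.

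If one instead wanted the sharper form of Conjecture~\ref{conj:degree2k}, namely that $\Delta(G) = 2k$ forces $G$ to be $k$ pairwise edge-disjoint negative cycles through a common vertex, the double counting alone would not suffice; one would then have to analyze when all $2k$ cycles of $\mathcal{C}$ pass through the maximum-degree vertex and how they may overlap, which I would treat separately and do not attempt in this plan.
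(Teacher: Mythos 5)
Your proposal is correct and matches the paper's argument: the paper derives the corollary in one line from the negative cycle double cover of order $2k$, observing that the edges at each vertex $v$ lie in at most $2k$ cycles, each meeting $v$ in at most two edges, which is exactly your incidence count. Your write-up just makes the double counting explicit.
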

	
	\medskip
	{\bf Acknowledgment.} This work is partly done when the first author was visiting IRIF. It has also partially supported by the following grants: NRW-Forschungskolleg Gestaltung von flexiblen Arbeitswelten; ANR (France) project HOSIGRA (ANR-17-CE40-0022); European Union's Horizon 2020 research and innovation program under the Marie Sklodowska-Curie grant agreement No 754362.
	
\bibliography{Part1}{}
\addcontentsline{toc}{section}{References}
\bibliographystyle{plain}

		
		
		

		

		


\end{document}